\newtheorem{theorem}{Theorem}[section]
\newtheorem{lemma}[theorem]{Lemma}
\newtheorem{proposition}[theorem]{Proposition}
\theoremstyle{definition}
\newtheorem{definition}[theorem]{Definition}
\newtheorem{example}[theorem]{Example}
\theoremstyle{remark}
\newtheorem{remark}[theorem]{Remark}
\numberwithin{equation}{section}
\renewcommand{\AA}{\mathcal{A}}
\newcommand{\BB}{\mathcal{B}}
\newcommand{\DD}{\mathcal{D}}
\newcommand{\EE}{\mathcal{E}}
\newcommand{\FF}{\mathcal{F}}
\newcommand{\LL}{\mathcal{L}}
\newcommand{\PP}{\mathcal{P}}
\renewcommand{\SS}{\mathscr{S}}
\newcommand{\field}[1]{\mathbb{#1}}
\newcommand{\R}{\field{R}}
\newcommand{\N}{\field{N}}
\newcommand{\E}{\field{E}}
\renewcommand{\P}{\field{P}}
\newcommand{\supp}{{\rm supp}}
\newcommand{\Lip}{{\rm{Lip}}}
\newcommand{\Lipb}{\Lip_{\rm b}}
\newcommand{\Con}{{\rm{C}}}
\newcommand{\Cb}{\Con_{\rm b}}
\newcommand{\Cka}{\Con_\kappa}
\newcommand{\al}{\alpha}
\newcommand{\be}{\beta}
\newcommand{\de}{\delta}
\newcommand{\ep}{\varepsilon}
\newcommand{\ka}{\kappa}
\newcommand{\la}{\lambda}
\newcommand{\si}{\sigma}
\newcommand{\ph}{\varphi}
\newcommand{\om}{\omega}
\newcommand{\De}{\Delta}
\newcommand{\La}{\Lambda}
\newcommand{\Si}{\Sigma}
\newcommand{\Om}{\Omega}
\newcommand{\sm}{\setminus}
\newcommand{\es}{\emptyset}
\newcommand{\UC}{\mathop{\text{\upshape{UC}}}\nolimits}
\newcommand{\tr}{\mathop{\text{\upshape{tr}}}\nolimits}
\begin{document}

\setcounter{page}{1}

\title[Upper envelopes and viscosity solutions]{Upper envelopes of families of Feller semigroups and viscosity solutions to a class of nonlinear Cauchy problems}

\author[M.~Nendel \MakeLowercase{and} M.~R\"ockner]{Max Nendel$^1$\MakeLowercase{and} Michael R\"ockner$^2$}

\address{$^{1}$Center for Mathematical Economics, Bielefeld University, 33615 Bielefeld, Germany}
\email{\textcolor[rgb]{0.00,0.00,0.84}{Max.Nendel@uni-bielefeld.de}}

\address{$^{2}$Faculty of Mathematics, Bielefeld University, 33615 Bielefeld, Germany}
\email{\textcolor[rgb]{0.00,0.00,0.84}{Roeckner@math.uni-bielefeld.de}}

\date{\today}

\thanks{Financial support through the German Research Foundation via CRC 1283 ``Taming Uncertainty'' is gratefully acknowledged. The authors thank two anonymous referees for many comments and suggestions that lead to a decisive improvement in the presentation of the manuscript, as well as Liming Yin for his helpful observations.}

%\newline \indent $^{*}$Corresponding author}

\begin{abstract}
In this paper, we consider the (upper) semigroup envelope, i.e.\ the least upper bound, of a given family of linear Feller semigroups. We explicitly construct the semigroup envelope and show that, under suitable assumptions, it yields viscosity solutions to abstract Hamilton-Jacobi-Bellman-type partial differential equations related to stochastic optimal control problems arising in the field of Robust Finance. We further derive conditions for the existence of a Markov process under a nonlinear expectation related to the semigroup envelope for the case where the state space is locally compact. The procedure is then applied to numerous examples, in particular, nonlinear PDEs that arise from control problems for infinite dimensional Ornstein-Uhlenbeck and L\'evy processes.

\smallskip
\noindent \emph{Key words:} Semigroup envelope, fully nonlinear PDE, viscosity solution, Feller process, model uncertainty, nonlinear expectation

\smallskip
\noindent \emph{AMS 2010 Subject Classification:} 47H20; 49L25; 60G20
\end{abstract}

\maketitle

\section{Introduction}

Assume that we are given a ``nice'' Feller process and that there are some features, for example some parameters (drift, volatility, etc.), of the process that cannot be determined precisely. In this case, one typically speaks of model uncertainty or ambiguity. This topic has been studied extensively in the context of Economics and Mathematical Finance in the last decades. Prominent examples include a Brownian motion (Bachelier model) with drift uncertainty (cf.~Coquet et al.~\cite{MR1906435}) or volatility uncertainty (cf. Peng~\cite{PengG},\cite{MR2474349}), a Black-Scholes model with volatility uncertainty (cf. Epstein and Ji \cite{Epji}, Vorbrink \cite{MR3250653}), and L\'evy processes with uncertainty in the L\'evy triplet (cf. Hu and Peng \cite{PengHu}, Neufeld and Nutz \cite{NutzNeuf}, Hollender \cite{H2016}, K\"uhn \cite{MR3941868}, Denk et al.~\cite{dkn}). Under this type of uncertainty, worst case considerations together with dynamic consistency requirements lead to a stochastic optimal control problem, where, intuitively speaking, ``nature'' tries to control the system into the worst possible scenario, and to the consideration of so-called nonlinear expectations. In the case of a Brownian Motion with uncertain volatility within an interval $[\si_\ell,\si_h]$ with $0<\si_\ell<\si_h$, this leads, for instance, to the control problem
\begin{equation}\label{optcont}
 V(t,x;u_0):=\sup_{\sigma \in \Sigma} \E\bigg[u_0\bigg(x+\int_0^t \sigma_s\, {\rm d}B_s\bigg)\bigg],
\end{equation}
where $B$ is a standard Brownian Motion on a suitable filtered probability space and $\Sigma$ consists of all progressively measurable stochastic processes $\si=(\si_t)_{t\geq 0}$ with values in $[\si_\ell,\si_h]$. Solving the optimal control problem \eqref{optcont} then results in the HJB equation
\[
  \partial_t u(t,x)=\sup_{\sigma\in [\si_\ell,\si_h]} \frac{\sigma^2}{2}\partial_{xx}u(t,x)\quad \text{for }t\geq 0\text{ and }x\in \R, \quad u(0)=u_0,
\]
which is typically referred to as $G$-heat equation. We refer to Denis et al. \cite{MR2754968} for a detailed illustration of this relation. Moreover, one can show that the value function \eqref{optcont} admits a representation of the form
\[
 V(t,x;u_0)=\EE\big(u_0(x+X_t)\big),
\]
where $\EE$ is a sublinear expectation, more precisely a $G$-expectation, and $X$ is a so-called $G$-Brownian Motion (cf.~Denis et al. \cite{MR2754968} and Peng~\cite{PengG},\cite{MR2474349}).\\

Motivated by this example, we choose a semigroup-theoretic approach, formally separating the space and time variable, in order to prove the existence of viscosity solutions to abstract Hamilton-Jacobi-Bellman-type equations of the form 
\begin{equation}\label{nonlinPDE}
 \partial_t u(t)=\sup_{\la\in \La} A_\la u(t)\quad \text{for }t\geq 0, \quad u(0)=u_0, 
\end{equation}
where $(A_\lambda)_{\lambda\in \Lambda}$ is a family of generators of Feller processes indexed by a nonempty index set $\Lambda$. We refer to Engel and Nagel \cite{MR1721989} or Pazy \cite{MR710486} for more details on semigroup theory related to linear PDEs and the idea of formally separating space and time. Our approach is based on an explicit construction and approximation of the solution due to Nisio \cite{Nisio}, which adds a primal description to the dual representation in terms of a stochastic optimal control problem. In a second step, we discuss how a stochastic process under a sublinear expectation can be obtained from the nonlinear semigroup which describes the transition of the process, using a nonlinear version of Kolmogorov's extension theorem by Denk et al. \cite{MR3824739}. Finally, we link semigroup envelopes to the value functions of abstract versions of Meyer-type control problems. We thus provide a nonlinear analogue to the classical relation between Feller processes, partial differential equations and semigroups. It is worth noting that stochastic optimal control problems and nonlinear PDEs of the form \eqref{nonlinPDE} are intimately related to BSDEs (cf. Pardoux and Peng \cite{MR1176785},\cite{MR1037747}, El Karoui et al. \cite{MR1434407}, Coquet et al.~\cite{MR1906435}), 2BSDEs (cf. Cheridito et al. \cite{MR2319056}, Soner et al. \cite{MR2746175},\cite{MR2925572}) and BSDEs with jumps (cf. Kazi-Tani et al.~\cite{MR3361253},\cite{MR3375890}) resulting in a stochastic representation of solutions to nonlinear Cauchy problems of the form \eqref{nonlinPDE}. The present paper can be seen as an analytic counter part to these approaches, which are based on mainly stochastic methods, and the techniques we use might pave the way for further applications in control theory.\\

For two (possibly nonlinear) semigroups $S=(S(t))_{t\geq 0}$ and $T=(T(t))_{t\geq 0}$ on a Banach lattice $X$, we write $S\leq T$ if $S(t) x\leq T(t)x$ for all $t\geq 0$ and $x\in X$. For a nonempty index set $\La$ and a family $(S_\la)_{\la\in \La}$ of semigroups on $X$ we call a semigroup $T$ an upper bound of $(S_\la)_{\la\in \La}$ if $T\geq S_\la$ for all $\la\in \La$. We call $\SS$ the least upper bound of $(S_\la)_{\la\in \La}$ if $\SS$ is an upper bound of $(S_\la)_{\la\in \La}$ and $\SS\leq T$ for any other upper bound $T$ of $(S_\la)_{\la\in \La}$. Then, the question arises under which conditions the family $(S_\la)_{\la\in \La}$ has a least upper bound. To the best of our knowledge this question has first been addressed by Nisio \cite{Nisio}, in the case every $S_\lambda$ is a strongly continuous semigroup on the space of all bounded measurable functions, which is why we call the least upper bound $\SS$ of $(S_\la)_{\la\in \La}$ the \textit{Nisio semigroup} or the \textit{(upper) semigroup envelope} of $(S_\la)_{\la\in \La}$. Due to a Theorem of Lotz \cite{MR797538} it is known that strongly continuous linear semigroups on the space of all bounded measurable functions always have a bounded generator, which is why the result of Nisio is not applicable for most semigroups related to partial differential equations. However, using a similar approach to the one by Nisio on the space of bounded and uniformly continuous functions, Denk et al.~\cite{dkn} proved the existence of a least upper bound for transition semigroups of L\'evy processes. In the present paper, we use the idea of Nisio in a more general framework than Denk et al.~\cite{dkn} in order to go beyond L\'evy processes. Main examples will be transition semigroups of Ornstein-Uhlenbeck processes and L\'evy processes on real separable Hilbert spaces, Geometric Brownian Motions, and Koopman semigroups with semiflows in real separable Banach spaces.\\

A fundamental result from semigroup theory is the fact that for a strongly continuous semigroup $S=(S(t))_{t\geq 0}$ of linear operators with generator $A$ the function $u(t):=S(t)u_0$, for sufficiently regular initial data $u_0$, is a solution to the abstract Cauchy problem
\begin{equation}\label{linPDE}
 \partial_t u(t)=A u(t)\quad \text{for }t\geq 0, \quad u(0)=u_0.
\end{equation}
We refer to Engel and Nagel \cite{MR1721989} or Pazy \cite{MR710486} for more details on this relation. Similar as in the work by Denk et al.~\cite{dkn}, we show that the semigroup envelope yields a viscosity solution to the nonlinear Cauchy problem \eqref{nonlinPDE} if $A_\la$ is the generator of $S_\la$ for all $\la\in \La$. On one side, this is interesting from a structural point of view, since it establishes a relation between the least upper bound of a family of semigroups and the least upper bound of their generators. On the other side, this shows that semigroup envelopes are closely related to solutions to possibly infinite-dimensional stochastic optimal control problems as well as local and non-local Hamilton-Jacobi-Bellman equations in Hilbert spaces, cf.~Barbu and Da Prato \cite{MR623937},\cite{MR641828},\cite{MR704182},\cite{MR839043}, Fabbri et al.~\cite{MR3674558}, Federico and Gozzi \cite{MR3861820}, \'{S}wi\polhk ech and Zabczyk \cite{MR3010779},\cite{MR3558357}. We point out that, in comparison to the standard literature on control theory and viscosity theory, our approach covers a different spectrum of applications. While in the standard theory on viscosity solutions very general types of HJB equations of the form
\[
 u_t=F\big(t,x,u(t,x),D_x u(t,x), D_{xx}u(t,x)\big)
\]
with a suitable function $F$ are considered, our approach uses very much the particular structure of the equation \eqref{nonlinPDE}. On the other hand, we allow for very general forms of generators, which are not covered by standard results. However, as we discuss in Section \ref{sec:control}, in most cases that are covered by, both, the standard approach and our approach, the solution concepts coincide. We thus propose a different yet consistent solution concept, which allows to cover a different range of examples, in particular, completely non-standard control problems. In order to come up with control problems that are somewhat closer to reality, in the past decades, an increasing interest has been paid to infinite-dimensional control problems with a particular focus on infinite-dimensional controlled Ornstein-Uhlenbeck processes. We refer to Fabbri et al.~\cite{MR3674558} and the references therein for a detailed discussion on this topic. Considering a family $(A_\lambda)_{\lambda\in \Lambda}$ of generators of infinite-dimensional Ornstein-Uhlenbeck processes, we cover a certain range of examples for Ornstein-Uhlenbeck control problems. In the standard theory on controlled Ornstein-Uhlenbeck processes (cf. Fabbri et al.~\cite{MR3674558}) the drift term consists of an expression of the form $\big(BX_t+m \big){\rm d}t$ with a fixed unbounded generator $B$ and a controlled vector $m$. Under certain conditions, the existence of mild solutions and $C^1$-regularity of the related HJB equation can be obtained using smoothing properties of the linear semigroup related to $B$ and perturbation results from semigroup theory for semilinear equations. Our approach allows to consider controlled Ornstein-Uhlenbeck processes with bounded generators in the drift term with controls in terms of $B$, $m$ and the covariance operator in the diffusion part (see Example \ref{ex:OU}). In a forthcoming paper with Ben Goldys and the authors we show that our approach also extends to unbounded operators $B$

Throughout, we consider a nonempty index set $\La$, a fixed separable metric space $(M,d)$ and a fixed weight function $\kappa\colon M\to (0,\infty)$, which is assumed to be continuous and bounded. Let $\Con=\Con (M)$ be the space of all continuous functions $M\to \R$. We denote the space of all $u\in \Con$ with norm
\[
 \|u\|_\infty:=\sup_{x\in M}|u(x)|<\infty
\]
by $\Cb$ and the space of all $u\in \Con$ with seminorm
\[
 \|u\|_{\Lip}:=\inf\big\{L\geq 0\, |\, \forall x,y\in M:|u(x)-u(y)|\leq Ld(x,y)\big\}<\infty
\]
by $\Lip$. Finally, we denote the space of all $u\in C$ with norm
\[
 \|u\|_\kappa:=\|\kappa u\|_\infty<\infty
\]
by $\Cka$ and the closure of $\Lipb:=\Lip\cap \Cb$ in the space $\Cka$ by $\UC_\kappa$. If $\ka$ is bounded below by some positive constant, then $C_\ka=C_{\rm b}$ and $\|\cdot \|_\ka$ is equivalent to $\|\cdot \|_\infty$. In this case, $\UC_\ka$ is the closure of $\Lipb$ w.r.t. $\|\cdot\|_\infty$, which is the space $\UC_{\rm b}$ of all bounded and uniformly continuous functions $M\to \R$. If $M$ has the Heine-Borel property, i.e if every closed bounded subset of $M$ is compact, and $\ka\in \Con_0$, then $\UC_\kappa=\{u\in \Con\, |\, \kappa u\in \Con_0\}$, where $\Con_0$ is the closure of the space $\Con_c$ of all continuous functions with compact support w.r.t.\ $\|\cdot\|_\infty$. We refer to Example \ref{rem.contabove} b) for more details. For a sequence $(u_n)_{n\in \N}\subset \UC_\kappa$ and $u\in \UC_\kappa$, we write $u_n\nearrow u$ as $n\to \infty$ if $u_n\leq u_{n+1}$ for all $n\in \N$ and $u_n(x)\to u(x)$ as $n\to \infty$ for all $x\in M$. Analogously, we write $u_n\searrow u$ as $n\to \infty$ if $u_n\geq u_{n+1}$ for all $n\in \N$ and $u_n(x)\to u(x)$ as $n\to \infty$ for all $x\in M$. We are now ready to introduce the central objects of our discussion.

\begin{definition}\label{def:feller}\
 \begin{enumerate}
  \item[a)] We call a family $\SS=(\SS(t))_{t\geq 0}$ of possibly nonlinear operators a \textit{Feller semigroup} if the following conditions are satisfied:
  \begin{enumerate}
   \item[(i)] $\SS(t)\colon \UC_\kappa \to \UC_\kappa$ is continuous for all $t\geq 0$,
   \item[(ii)] $\SS(0)u=u$ and $\SS(s+t)u=\SS(s)\SS(t)u$ for all $s,t\geq 0$ and $u\in \UC_\kappa$,
   \item[(iii)] $\SS(t)$ is \textit{monotone} and \textit{continuous from below} for all $t\geq 0$, i.e. for any sequence $(u_n)_{n\in \N}\subset \UC_\kappa$ and $u\in \UC_\kappa$ with $u_n\nearrow u$ as $n\to \infty$ it holds $\SS(t) u_n\nearrow \SS(t) u$ as $n\to \infty$.
  \end{enumerate}
  \item[b)] Let $D\subset \UC_\ka$. We then say that a Feller semigroup $\SS$ is \textit{strongly continuous} on $D$ if the map
  \[
   [0,\infty)\to \UC_\ka,\quad  t\mapsto \SS(t)u
  \]
  is continuous for all $u\in D$. If $D=\UC_\ka$, we say that $\SS$ is \textit{strongly continuous}.
 \end{enumerate}
\end{definition}

Note that our definition of a Feller semigroup is somewhat different from the standard notion in the literature. First of all, we do not require strong continuity or linearity of the semigroup a priori, as it is usually the case. Moreover, Feller semigroups are oftentimes related to functions vanishing at infinity. In order to treat situations, where the state space $M$ is infinite-dimensional, we do not require any condition related to compact sets but rather a certain growth condition in terms of the weight function $\kappa$.

Throughout this work, we assume the following setup:
\begin{enumerate}
 \item[(A1)] For all $\la\in \La$ let $S_\la$ be a Feller semigroup of linear operators with $S_\la (t)1=1$, where $1$ denotes the constant $1$-function.
 \item[(A2)] There exist constants $\al,\be\in \R$ such that
 \[
 \|S_\la(t)u\|_\kappa\leq e^ {\al t}\|u\|_\kappa \quad \text{and}\quad \|S_\la(t)u\|_{\Lip}\leq e^{\be t}\|u\|_{\Lip}
 \]
 for all $u\in \Lipb$, $\la\in \La$ and $t\geq 0$.
\end{enumerate}

At this point, we would like to briefly discuss the assumptions (A1) and (A2) and explain the key differences between the present paper and the paper by Denk et al.~\cite{dkn}. First, we would like to mention that the assumptions (A1) and (A2) are satisfied with $\kappa=1$, $\alpha =0$ and $\beta=0$ for Markovian convolution semigroups (semigroups arising from L\'evy processes). Different from \cite{dkn}, we do not make any assumption on strong continuity of the semigroups $(S_\la)_{\la\in \La}$ or their generators at this point. Strong continuity was a key ingredient in the proof of the dynamic programming principle (the semigroup property of the semigroup envelope) in \cite{dkn} and also in the paper by Nisio \cite{Nisio}. In this paper, we provide an alternative proof for the dynamic programming principle, which does not require any strong continuity assumptions, and covers a more general setup. In particular, we prove the existence of the semigroup envelope of the family $(S_\la)_{\la\in \La}$ (Theorem \ref{main1}) solely under the assumptions (A1) and (A2). In Section \ref{sec:strongcont}, we then provide three conditions that imply the strong continuity of the Nisio semigroup, which in turn implies that the Nisio semigroup is a viscosity solution to a nonlinear Cauchy problem (cf. Section \ref{sec:viscosity}). The key assumption in order to obtain the strong continuity in \cite{dkn} and \cite{Nisio} is a joint density assumption on the domains of the generators, which, in some infinite-dimensional applications, is not satisfied. In particular, uncertainty in the covariance operator of infinite-dimensional Brownian Motions leads to major restrictions, see \cite[Example 3.3]{dkn}. The conditions for strong continuity and the generalised setup, we present in this paper, allow us to treat, both, finite and infinite-dimensional applications (Koopman semigroups, geometric dynamics, Ornstein-Uhlenbeck processes and L\'evy processes) in full generality concerning the uncertainty, and to improve \cite[Example 3.3]{dkn} in such a way that no L\'evy triplet is excluded a priori. The assumption in order to obtain the strong continuity in \cite{dkn} is a special case of Proposition \ref{scontin} in the present paper. Finally, we would like to point out that the setup we choose is also more flexible regarding the tail behaviour of solutions. More precisely, the choice of the weight function $\kappa$ enables us to consider also unbounded initial data (contingent claims), which was not possible in the setup chosen by Denk et al.\\
 
The paper is structured as follows. In Section \ref{sec:proof}, we show the existence of the semigroup envelope $\SS$ of the family $(S_\la)_{\la\in \La}$ under the assumptions (A1) and (A2), and provide approximation results for the Nisio semigroup. The main result of this section is Theorem \ref{main1}. In Section \ref{sec:strongcont}, we provide conditions that guarantee the strong continuity of the semigroup envelope (Propositions \ref{critstrongcont1} -  \ref{critstrongcont2}). In Section \ref{sec:viscosity}, we discuss the connection between semigroup envelopes and viscosity solutions to a nonlinear abstract Cauchy problem. The main result of this section is Theorem \ref{viscosity1}. In Section \ref{sec:expec}, we give a stochastic representation of the semigroup envelope via a stochastic process under a sublinear expectation (cf. Theorem \ref{stochrep}). Section \ref{sec:control} is devoted to the connection between the results obtained in the present paper and the field of control theory. In particular, we explain the link between semigroup envelopes and value functions of abstract control problems. In Section \ref{sec:ex}, we apply the results from Sections \ref{sec:proof}, \ref{sec:strongcont} and \ref{sec:expec} to several non-standard examples.

\section{Construction of the semigroup envelope}\label{sec:proof}

Let $u\in \UC_\ka$, $\la\in \La$ and $h\geq 0$. Then, $\|S_\la(h)u\|_\ka\leq e^{\al h}\|u\|_\ka$ since the map $S_\la(h)\colon \UC_\ka\to \UC_\ka$ is continuous, which implies that
\[
 \big(\EE_h u\big)(x):=\sup_{\la\in \La} \big(S_\la(h)u\big)(x)
\]
is well-defined for all $x\in M$.

\begin{lemma}\label{lip121}
 Let $h\geq 0$.
 \begin{enumerate}
  \item[a)] $\|\EE_h u-\EE_h v\|_\ka\leq e^{\al h} \|u-v\|_\ka$ for all $u,v\in \UC_\ka$.
  \item[b)] $\|\EE_h u\|_{\Lip}\leq e^{\be h}\|u\|_{\Lip}$ for all $u\in \Lipb$. 
  \item[c)] The map $\EE_h\colon \UC_\ka\to \UC_\ka$ is well-defined and Lipschitz continuous with Lipschitz constant $e^{\al h}$.
  \item[d)] $\EE_h$ is sublinear, monotone, and continuous from below with $\EE_h 1=1$.
 \end{enumerate}
\end{lemma}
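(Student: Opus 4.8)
The plan is to establish the four assertions in the order a), b), c), d), using throughout the estimate $\|S_\la(h)w\|_\ka\le e^{\al h}\|w\|_\ka$ for every $w\in\UC_\ka$ (noted at the start of the section, so that $\EE_h w$ is finite-valued) together with the elementary inequality $\big|\sup_{\la}a_\la-\sup_{\la}b_\la\big|\le \sup_\la|a_\la-b_\la|$, valid whenever the suprema involved are finite.

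For part a) I would fix $x\in M$ and, invoking linearity of each $S_\la(h)$, bound $|(\EE_h u)(x)-(\EE_h v)(x)|$ by $\sup_\la\big|\big(S_\la(h)(u-v)\big)(x)\big|$; multiplying by $\ka(x)$ and taking suprema over $x$ and $\la$ then yields $\|\EE_h u-\EE_h v\|_\ka\le e^{\al h}\|u-v\|_\ka$ from the $\ka$-bound applied to $u-v\in\UC_\ka$. Part b) is the same computation in the space variable: for $x,y\in M$ one bounds $|(\EE_h u)(x)-(\EE_h u)(y)|$ by $\sup_\la|(S_\la(h)u)(x)-(S_\la(h)u)(y)|\le \sup_\la\|S_\la(h)u\|_{\Lip}\,d(x,y)$ and applies the Lipschitz bound in (A2).

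For part c) the Lipschitz continuity with constant $e^{\al h}$ is exactly part a), so the real content is that $\EE_h$ maps $\UC_\ka$ into itself. I would first show $\EE_h(\Lipb)\subset\Lipb$: for $u\in\Lipb$ the order bounds $-\|u\|_\infty\,1\le u\le\|u\|_\infty\,1$, combined with monotonicity, linearity and $S_\la(h)1=1$, give $|S_\la(h)u|\le\|u\|_\infty$ for every $\la$, hence $\|\EE_h u\|_\infty\le\|u\|_\infty$; together with the Lipschitz bound from b) this shows $\EE_h u\in\Lipb$. For general $u\in\UC_\ka$ I would take $u_n\in\Lipb$ with $u_n\to u$ in $\Cka$; then $\EE_h u_n\in\Lipb$, and by a) one has $\|\EE_h u_n-\EE_h u\|_\ka\le e^{\al h}\|u_n-u\|_\ka\to0$, so $\EE_h u$ is the $\|\cdot\|_\ka$-limit of a sequence in $\Lipb$ and therefore belongs to $\UC_\ka$.

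For part d), subadditivity and positive homogeneity follow from linearity of each $S_\la(h)$ together with $\sup_\la(a_\la+b_\la)\le\sup_\la a_\la+\sup_\la b_\la$ and $\sup_\la(ca_\la)=c\sup_\la a_\la$ for $c\ge0$; monotonicity follows from monotonicity of each $S_\la(h)$; and $\EE_h 1=\sup_\la S_\la(h)1=1$ by (A1). Continuity from below is the only delicate point: if $u_n\nearrow u$, then $\EE_h u_n$ increases and stays below $\EE_h u$, so it converges pointwise to some $g\le\EE_h u$; for the reverse inequality I fix $x$ and $\la$, pass to the limit in $\big(S_\la(h)u_n\big)(x)\le(\EE_h u_n)(x)\le g(x)$ using continuity from below of $S_\la(h)$, and then take the supremum over $\la$ to obtain $\EE_h u\le g$. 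The main obstacle is the membership $\EE_h u\in\UC_\ka$ in part c): a Lipschitz function in $\Cka$ need not lie in $\UC_\ka$, so it is precisely the sup-norm contraction $\|\EE_h u\|_\infty\le\|u\|_\infty$ on $\Lipb$ — resting on $S_\la(h)1=1$ and monotonicity — that makes the density argument go through.
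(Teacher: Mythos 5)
Your proof is correct and follows essentially the same route as the paper: the one-sided supremum comparison for a) and b), the invariance of $\Lipb$ plus density for c), and passing the properties of each $S_\la(h)$ through the supremum for d). The only difference is that you spell out two steps the paper leaves terse, namely the sup-norm bound $\|\EE_h u\|_\infty\leq\|u\|_\infty$ via monotonicity and $S_\la(h)1=1$, and the two-inequality argument for continuity from below, both of which are exactly what the paper's brief remarks implicitly rely on.
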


\begin{proof}\
 \begin{enumerate}
  \item[a)] Let $u,v\in \UC_\ka$ and $h\geq 0$. Then, for all $\la\in \La$,
  \begin{align*}
   \ka\big(S_\la(h)u-\EE_h v\big)&\leq \ka\big(S_\la(h)u-S_\la(h)v\big)= \ka S_\la(h)(u-v)\\
   &\leq \|S_\la(h)(u-v)\|_\ka\leq e^{\al h}\|u-v\|_\ka.
  \end{align*}
 Taking the supremum over $\la\in \La$ and a symmetry argument imply that
  \[
   \|\EE_h u-\EE_h v\|_\ka\leq e^{\al h}\|u-v\|_\ka.
  \]
  \item[b)] Let $u\in \Lipb$ and $x,y\in M$. Then, for all $\la\in \La$,
  \[
   (S_\la(h)u\big)(x)-\big(\EE_h u\big)(y)\leq (S_\la(h)u\big)(x)-(S_\la(h)u\big)(y)\leq e^{\be h}\|u\|_{\Lip}d(x,y).
  \]
  Taking the supremum over $\la\in \La$ and a symmetry argument yield that
  \[
   \big|\big(\EE_h u\big)(x)-\big(\EE_h u\big)(y)\big|\leq e^{\be h}\|u\|_{\Lip}d(x,y).
  \]
  \item[c)] By part b) and Assumption (A1), we have that $\EE_h u\in \Lipb$ for all $u\in \Lipb$. Since $\Lipb$ is dense in $\UC_\ka$, part a) implies that $\EE_h\colon \UC_\ka\to \UC_\ka$ is well-defined and Lipschitz continuous with Lipschitz constant $e^{\al h}$.
  \item[d)] All these properties directly carry over to the supremum.
 \end{enumerate}
\end{proof}

In the sequel, we consider the set $P:= \{\pi\subset[0,\infty)\colon 0\in\pi, \, |\pi|<\infty\}$ of finite partitions of the positive half line. The set of partitions with end-point $t$ will be denoted by $P_t$, i.e. $P_t := \{\pi \in P: \max \pi = t\}$. Let $u\in \UC_\kappa$ and $\pi\in P\setminus\big\{\{0\}\big\}$. Then, there exist $0=t_0<t_1<\ldots <t_m$ such that $\pi=\{t_0,t_1,\dots,t_m\}$ and we set
\[
 \EE_\pi u := \EE_{t_1-t_0} \ldots \EE_{t_m-t_{m-1}} u.
\]
Moreover, we set $\EE_{\{0\}}u := u$. Note that, by definition, $\EE_h = \EE_{\{0,h\}}$ for $h>0$. Since $\EE_h\colon \UC_\ka\to \UC_\ka$ is well-defined, the map $\EE_\pi\colon \UC_\ka\to \UC_\ka$ is well-defined, too.

\begin{lemma}\label{lip12}
 For all $\pi\in P$, the operator $\EE_\pi$ is sublinear, monotone and continuous from below with $\EE_\pi 1=1$. Moreover, $\|\EE_\pi u-\EE_\pi v\|_\ka\leq e^{\al \max \pi} \|u-v\|_\ka$ for all $u,v\in \UC_\ka$ and $\|\EE_\pi u\|_{\Lip}\leq e^{\be \max \pi}\|u\|_{\Lip}$ for all $u\in \Lipb$.
\end{lemma}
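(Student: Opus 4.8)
The plan is to exploit the fact that, by definition, $\EE_\pi = \EE_{t_1-t_0}\circ\EE_{t_2-t_1}\circ\cdots\circ\EE_{t_m-t_{m-1}}$ is a \emph{finite composition} of the single-step operators already analysed in Lemma \ref{lip121}, and to show that every property in the statement is stable under such composition, with the exponential constants accumulating additively in the exponent. Concretely, I would argue by induction on the number $m$ of subintervals of $\pi$. The base cases are immediate: for $m=0$ we have $\EE_{\{0\}}=\id$, and for $m=1$ we have $\EE_\pi=\EE_{t_1}$, so the claims are exactly Lemma \ref{lip121}. For the induction step, writing $h=t_1-t_0$ and letting $\pi'$ denote the shifted partition with subinterval lengths $t_2-t_1,\dots,t_m-t_{m-1}$ (so that $\pi'$ has $m-1$ subintervals and $\max\pi'=t_m-t_1$), we have the factorisation $\EE_\pi=\EE_h\circ\EE_{\pi'}$.

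The qualitative properties then pass through the composition directly. Monotonicity of a composition of monotone maps is clear, and $\EE_\pi 1=\EE_h(\EE_{\pi'}1)=\EE_h 1=1$ by the induction hypothesis together with Lemma \ref{lip121}(d). For continuity from below, if $u_n\nearrow u$ then $\EE_{\pi'}u_n\nearrow\EE_{\pi'}u$ by the induction hypothesis, and applying the operator $\EE_h$, which is itself continuous from below, yields $\EE_\pi u_n\nearrow\EE_\pi u$. The quantitative bounds follow by chaining the single-step estimates: Lemma \ref{lip121}(a) gives $\|\EE_\pi u-\EE_\pi v\|_\ka\le e^{\al h}\|\EE_{\pi'}u-\EE_{\pi'}v\|_\ka\le e^{\al h}e^{\al\max\pi'}\|u-v\|_\ka=e^{\al\max\pi}\|u-v\|_\ka$, and the $\Lip$-seminorm estimate follows identically from Lemma \ref{lip121}(b), using that each $\EE_h$ maps $\Lipb$ into $\Lipb$ by Lemma \ref{lip121}(c), so that all intermediate seminorms are finite and the bound is meaningful at every step. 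The telescoping of the exponents rests only on $\sum_{i=1}^m(t_i-t_{i-1})=t_m=\max\pi$.

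The one point that requires slightly more than a bare appeal to ``it carries over'' is \emph{sublinearity}, since subadditivity alone is not stable under composition: one genuinely needs monotonicity of the outer operator to transport the inner operator's subadditivity through it. The argument is that subadditivity of $\EE_{\pi'}$ gives the pointwise inequality $\EE_{\pi'}(u+v)\le\EE_{\pi'}u+\EE_{\pi'}v$, and then monotonicity of $\EE_h$ followed by its subadditivity yields $\EE_\pi(u+v)=\EE_h(\EE_{\pi'}(u+v))\le\EE_h(\EE_{\pi'}u+\EE_{\pi'}v)\le\EE_h\EE_{\pi'}u+\EE_h\EE_{\pi'}v=\EE_\pi u+\EE_\pi v$; positive homogeneity is immediate from $\EE_h(\la w)=\la\EE_h w$ for $\la\ge 0$. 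I expect this interplay of monotonicity and subadditivity to be the only genuinely non-formal step, and it is mild; everything else is bookkeeping of the accumulated constants. Thus I anticipate no serious obstacle, the lemma being essentially a propagation of the single-step results of Lemma \ref{lip121} through a finite composition.
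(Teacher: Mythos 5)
Your proposal is correct and follows essentially the same route as the paper: the paper's proof simply observes that sublinearity, monotonicity, continuity from below, and the normalisation $\EE_\pi 1=1$ are preserved under composition of the single-step operators from Lemma \ref{lip121}, and that the $\|\cdot\|_\ka$- and $\Lip$-constants multiply (hence the exponents add) along the composition. Your induction merely makes this explicit, including the one genuinely non-formal point --- that transporting subadditivity through the composition uses monotonicity of the outer operator --- which the paper leaves implicit.
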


\begin{proof}\
 Since $\EE_h$ is a sublinear, monotone and continuous from below with $\EE_h1=1$ for all $h\geq 0$, the same holds for $\EE_\pi$ as these properties are preserved under compositions. The Lipschitz continuity follows from Lemma \ref{lip121} and the behaviour of Lipschitz constants under composition.
\end{proof}

Let $u\in \UC_\ka$. In the following, we consider the limit of $\EE_\pi u$ when the mesh size of the partition $\pi\in P$ tends to zero. First note that, for $h_1,h_2\geq 0$ and $x\in M$,
\begin{align*}
 \big(\EE_{h_1+h_2}u\big)(x)&=\sup_{\la\in \La} \big(S_\la(h_1+h_2)u\big)(x)=\sup_{\la\in \La} \big(S_\la(h_1)S_\la(h_2)u\big)(x)\\
 &\leq \sup_{\la\in \La} \big(S_\la(h_1)\EE_{h_2}u\big)(x)=\big(\EE_{h_1}\EE_{h_2}u\big)(x),
\end{align*}
which implies the pointwise inequality
\begin{equation}\label{monotone}
 \EE_{\pi_1}u\leq \EE_{\pi_2}u \quad\text{for }\pi_1,\pi_2\in P\text{ with }\pi_1\subset \pi_2.
\end{equation}
 In particular, for $\pi_1,\pi_2\in P$ and $\pi:=\pi_1\cup \pi_2$ it follows that $\pi\in P$ with
\begin{equation}\label{0}
 \big(\EE_{\pi_1}u\big)\vee \big(\EE_{\pi_2}u\big)\leq \EE_\pi u.
\end{equation}
Recall that we denote the set of all finite partitions with end point $t\geq 0$ by $P_t$. For $t\geq 0$, $x\in M$ and $u\in \UC_\ka$, we define
\begin{equation}\label{def:Nisio}
 \big(\SS(t)u\big)(x):=\sup_{\pi\in P_t} \big(\EE_\pi u\big)(x).
\end{equation}
The family $\SS=(\SS(t))_{t\geq 0}$ is called the \textit{(upper) semigroup envelope} or \textit{Nisio semigroup} of the family $(S_\la)_{\la\in \La}$. Note that, by definition, $\SS(0)u=u$ for all $u\in \UC_\ka$. We observe the following basic facts, which are a direct consequence of Lemma \ref{lip12}.

\begin{lemma}\label{lem:observ}
  Let $t\geq 0$. Then, the map $\SS(t)\colon \UC_\ka\to \UC_\ka$ is well-defined and Lipschitz continuous with Lipschitz constant $e^{\al t}$. Moreover, $\SS(t)$ is sublinear, monotone and continuous from below with $\SS(t)1=1$.
\end{lemma}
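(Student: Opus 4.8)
The plan is to obtain every assertion by taking the pointwise supremum over $\pi\in P_t$ in the corresponding statement of Lemma \ref{lip12}, first securing that $\SS(t)$ lands in $\UC_\ka$ and then transferring the structural properties. Fix $t\geq 0$; for every $\pi\in P_t$ we have $\max\pi=t$, so Lemma \ref{lip12} supplies $\|\EE_\pi u-\EE_\pi v\|_\ka\leq e^{\al t}\|u-v\|_\ka$ and $\|\EE_\pi u\|_{\Lip}\leq e^{\be t}\|u\|_{\Lip}$. Since each $\EE_\pi$ is positively homogeneous we have $\EE_\pi 0=0$, whence $\ka(x)(\EE_\pi u)(x)\leq\|\EE_\pi u\|_\ka\leq e^{\al t}\|u\|_\ka$; taking the supremum over $\pi$ shows that $(\SS(t)u)(x)\leq e^{\al t}\|u\|_\ka/\ka(x)<\infty$, so $\SS(t)u$ is a well-defined real-valued function for every $u\in\UC_\ka$.

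For the Lipschitz statement I would first treat $u\in\Lipb$. From (A1) the linearity, monotonicity and $S_\la(h)1=1$ give $\|S_\la(h)u\|_\infty\leq\|u\|_\infty$, hence $\|\EE_h u\|_\infty\leq\|u\|_\infty$ and, by iterating over the partition, $\|\EE_\pi u\|_\infty\leq\|u\|_\infty$ uniformly in $\pi\in P_t$; together with $\|\EE_\pi u\|_{\Lip}\leq e^{\be t}\|u\|_{\Lip}$ this exhibits $\SS(t)u$ as a pointwise supremum of a uniformly bounded, uniformly $e^{\be t}\|u\|_{\Lip}$-Lipschitz family, so $\SS(t)u\in\Lipb$ with $\|\SS(t)u\|_{\Lip}\leq e^{\be t}\|u\|_{\Lip}$. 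Next, imitating the proof of Lemma \ref{lip121} a), for arbitrary $u,v\in\UC_\ka$ and $\pi\in P_t$ the pointwise bound $\EE_\pi u-\SS(t)v\leq\EE_\pi u-\EE_\pi v$ and $\ka>0$ yield $\ka(\EE_\pi u-\SS(t)v)\leq\|\EE_\pi u-\EE_\pi v\|_\ka\leq e^{\al t}\|u-v\|_\ka$; taking the supremum over $\pi$ and then using symmetry gives $\|\SS(t)u-\SS(t)v\|_\ka\leq e^{\al t}\|u-v\|_\ka$ \emph{for all} $u,v\in\UC_\ka$. The point I expect to be the main (and essentially only) subtlety is concluding $\SS(t)u\in\UC_\ka$ for general $u$, not merely for $u\in\Lipb$: approximating $u$ by $u_n\in\Lipb$ in $\|\cdot\|_\ka$, the globally valid Lipschitz estimate gives $\|\SS(t)u-\SS(t)u_n\|_\ka\to0$ with $\SS(t)u_n\in\Lipb\subset\UC_\ka$, so the pointwise supremum $\SS(t)u$ is a $\|\cdot\|_\ka$-limit of elements of the closed space $\UC_\ka$ and therefore lies in $\UC_\ka$.

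The remaining properties transfer mechanically from $\EE_\pi$ to the supremum. Positive homogeneity and $\SS(t)1=1$ are immediate from $\EE_\pi(\la u)=\la\EE_\pi u$ and $\EE_\pi 1=1$; subadditivity follows from $\EE_\pi(u+v)\leq\EE_\pi u+\EE_\pi v\leq\SS(t)u+\SS(t)v$, and monotonicity from $u\leq v\Rightarrow\EE_\pi u\leq\EE_\pi v\leq\SS(t)v$, in each case taking the supremum over $\pi$. For continuity from below, let $u_n\nearrow u$ in $\UC_\ka$; monotonicity gives $\SS(t)u_n\leq\SS(t)u_{n+1}\leq\SS(t)u$, and the pointwise limit is obtained by interchanging the two (monotone) suprema,
\[
\sup_{n\in\N}(\SS(t)u_n)(x)=\sup_{n\in\N}\sup_{\pi\in P_t}(\EE_\pi u_n)(x)=\sup_{\pi\in P_t}\sup_{n\in\N}(\EE_\pi u_n)(x)=\sup_{\pi\in P_t}(\EE_\pi u)(x)=(\SS(t)u)(x),
\]
where the third equality uses the continuity from below of each $\EE_\pi$ from Lemma \ref{lip12}. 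This yields $\SS(t)u_n\nearrow\SS(t)u$ and completes the argument.
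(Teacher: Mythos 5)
Your proof is correct and follows essentially the same route as the paper: derive the $\|\cdot\|_\ka$-Lipschitz and $\|\cdot\|_{\Lip}$ estimates for $\SS(t)$ from Lemma \ref{lip12}, conclude $\SS(t)u\in\Lipb$ for $u\in\Lipb$, extend to all of $\UC_\ka$ by density and closedness, and let the remaining structural properties carry over to the supremum over $\pi\in P_t$. The only difference is that you spell out details the paper leaves implicit (finiteness of the pointwise supremum, the uniform sup-norm bound $\|\EE_\pi u\|_\infty\leq\|u\|_\infty$ via (A1), and the interchange of suprema for continuity from below), all of which are handled correctly.
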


\begin{proof}
 By Lemma \ref{lip12},
 \begin{equation}\label{eq:lip.nisio}
 \|\SS(t) u-\SS(t) v\|_\ka\leq e^{\al t} \|u-v\|_\ka\quad \text{for all }u,v\in \UC_\ka  
 \end{equation}
 and $\|\SS(t)u\|_{\Lip}\leq e^{\be t}\|u\|_{\Lip}$ for all $u\in \Lipb$. In particular, $\SS(t)u\in \Lipb$ for all $u\in \Lipb$. Now, the estimate \eqref{eq:lip.nisio} implies that $\SS(t)\colon \UC_\ka\to \UC_\ka$ is well-defined and Lipschitz continuous with Lipschitz constant $e^{\al t}$. The remaining properties follow directly from the observation that, by Lemma \ref{lip12} they are satisfied by $\EE_\pi$, for $\pi\in P_t$, and carry over to the supremum over all $\pi\in P_t$.
 \end{proof}
 
 In the following, we show that the Nisio semigroup $\SS$ is in fact a semigroup. We start with the following lemma, which shows that $\SS(t)u$ can be approximated by a monotone sequence of partitions depending on $u$. We would like to point out that, under additional assumptions, the dependence of the sequence on $u$ can be dropped (see Proposition \ref{monlimit}, below).
 
\begin{lemma}\label{seq}
 Let $u\in \UC_\ka$ and $t>0$. Then, there exists a sequence $(\pi_n)_{n\in \N}\subset P_t$ (depending on $u$) with $\EE_{\pi_n}u\nearrow \SS(t)u$ as $n\to \infty$.
\end{lemma}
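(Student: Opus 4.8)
The plan is to build the sequence by a separability/diagonal argument at the level of a countable dense subset of $M$, and then to promote pointwise convergence there to pointwise convergence on all of $M$ by an equicontinuity argument. Since $(M,d)$ is separable, I would fix a countable dense set $D=\{x_j\,:\,j\in\N\}\subset M$. By the definition \eqref{def:Nisio} of $\SS(t)u$ as a pointwise supremum, for each $j$ I choose partitions $\pi^{(j)}_k\in P_t$ ($k\in\N$) with $(\EE_{\pi^{(j)}_k}u)(x_j)\to(\SS(t)u)(x_j)$ as $k\to\infty$, and set $\pi_n:=\bigcup_{j,k\le n}\pi^{(j)}_k$. Each $\pi_n$ contains $0$ and has maximum $t$, so $\pi_n\in P_t$, and $\pi_n\subset\pi_{n+1}$; hence by \eqref{monotone} the functions $g_n:=\EE_{\pi_n}u$ satisfy $g_n\le g_{n+1}\le\SS(t)u$, which already gives the required monotonicity.

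Next I would verify convergence on $D$. For fixed $j,k$ and every $n\ge\max(j,k)$ one has $\pi^{(j)}_k\subset\pi_n$, so $g_n(x_j)\ge(\EE_{\pi^{(j)}_k}u)(x_j)$ by \eqref{monotone}; letting $n\to\infty$ and then $k\to\infty$ gives $\liminf_{n}g_n(x_j)\ge(\SS(t)u)(x_j)$, which combined with $g_n\le\SS(t)u$ yields $g_n(x_j)\to(\SS(t)u)(x_j)$ for every $j$. Since the $g_n$ increase and are bounded above by $\SS(t)u$, the limit $\tilde g(x):=\lim_n g_n(x)$ exists for all $x\in M$, with $\tilde g\le\SS(t)u$ and $\tilde g=\SS(t)u$ on $D$. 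As $\SS(t)u\in\UC_\ka$ is continuous by Lemma \ref{lem:observ} and $D$ is dense, the lemma follows once I show $\tilde g=\SS(t)u$ on all of $M$.

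The decisive step is to establish equicontinuity of the family $(g_n)$ at each point, which is where the hypotheses on $\ka$ and the density of $\Lipb$ in $\Cka$ enter. I would fix $x_0\in M$ and $\eta>0$ and choose $v\in\Lipb$ with $\|u-v\|_\ka$ small; by the $\ka$-Lipschitz estimate $\|\EE_\pi u-\EE_\pi v\|_\ka\le e^{\al t}\|u-v\|_\ka$ of Lemma \ref{lip12}, valid uniformly in $\pi\in P_t$, and since $\ka$ is continuous and strictly positive, hence bounded below by $\ka(x_0)/2$ on a neighbourhood of $x_0$, this converts into a uniform sup-bound $|\EE_\pi u-\EE_\pi v|\le\eta/4$ near $x_0$, uniformly in $\pi$. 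Using the Lipschitz control $\|\EE_\pi v\|_{\Lip}\le e^{\be t}\|v\|_{\Lip}$ from Lemma \ref{lip12} for the middle term, a triangle inequality then yields $|\EE_\pi u(x)-\EE_\pi u(x_0)|\le\eta$ for $d(x,x_0)$ sufficiently small, uniformly over $\pi\in P_t$ and thus over $n$. With this equicontinuity in hand, the standard argument closes the proof: given $x_0$ and $\eta$, pick $x_j\in D$ with $d(x_j,x_0)$ small enough that both $|g_n(x_j)-g_n(x_0)|$ (uniformly in $n$) and $|(\SS(t)u)(x_j)-(\SS(t)u)(x_0)|$ are at most $\eta$, and let $n\to\infty$ using $g_n(x_j)\to(\SS(t)u)(x_j)$ to obtain $g_n(x_0)\to(\SS(t)u)(x_0)$.

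I expect the main obstacle to be precisely this last passage from the dense set $D$ to all of $M$: a pointwise monotone limit of continuous functions is only lower semicontinuous, so agreement with the continuous function $\SS(t)u$ on a dense set does not by itself force agreement everywhere. The equicontinuity produced by approximating $u$ in $\Cka$ by Lipschitz functions, together with the uniform Lipschitz bound for $\EE_\pi$ on $\Lipb$ and the local positive lower bound on $\ka$, is exactly what bridges this gap. Everything else is routine bookkeeping with the monotonicity relations \eqref{monotone} and \eqref{0}.
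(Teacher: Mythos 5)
Your proposal is correct and follows essentially the same route as the paper: the same countable-dense-set/diagonal construction of $(\pi_n)$, and the same ingredients (the uniform estimates of Lemma \ref{lip12} on $\|\EE_\pi u-\EE_\pi v\|_\ka$ and $\|\EE_\pi v\|_{\Lip}$, together with the density of $\Lipb$ in $\UC_\ka$) to pass from convergence on the dense set to convergence on all of $M$. The only difference is presentational: the paper packages the continuity step as well-definedness of the supremum operator $\EE_\infty\colon \UC_\ka\to\UC_\ka$, whereas you unpack the same estimates into an equicontinuity statement for the family $(\EE_\pi u)_{\pi\in P_t}$; both correctly address the point that a monotone pointwise limit of continuous functions is a priori only lower semicontinuous.
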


\begin{proof}
 Let $(x_k)_{k\in \N}\subset M$ such that the set $\{x_k\,|\,  k\in \N\}$ is dense in $M$. Then, for every $k\in \N$, there exists a sequence $(\pi_n^k)_{n\in \N}\subset P_t$ with $\pi_n^k\subset \pi_{n+1}^k$ for all $n\in \N$ and
 \[
  \big(\EE_{\pi_n^k}u\big)(x_k)\nearrow \big(\SS(t)u\big)(x_k)\quad \text{as } n\to \infty.
 \]
 Now, let $\pi_n:=\bigcup_{k=1}^n\pi_n^k$ for all $n\in \N$. Then, $\pi_n^k\subset \pi_n\subset \pi_{n+1}$ for all $n\in \N$ and $k\in \{1,\ldots,n\}$. Hence,
 \begin{equation}\label{proof:1}
  \EE_{\pi_n^k}u\leq \EE_{\pi_n}u\leq \EE_{\pi_{n+1}}u\quad \text{for all }n\in \N\text{ and }k\in \{1,\ldots,n\}.
 \end{equation}
  Let $\big(\EE_\infty v\big)(x):= \sup_{n\in \N}\big(\EE_{\pi_n}v\big)(x)$ for all $v\in \UC_\ka$ and $x\in M$. Then, by Lemma \ref{lip12}, the map $\EE_\infty\colon \UC_\ka\to \UC_\ka$ is well-defined. In particular, $\EE_\infty u\colon M\to \R$ is continuous and, by \eqref{proof:1}, $\EE_{\pi_n}u\nearrow \EE_\infty u$ as $n\to \infty$. Again, by \eqref{proof:1},
 \[
  \big(\SS(t)u\big)(x_k)=\lim_{n\to \infty}\big(\EE_{\pi_n^k}u\big)(x_k)\leq \lim_{n\to \infty}\big(\EE_{\pi_n}u\big)(x_k)=\big(\EE_\infty u\big)(x_k)\leq \big(\SS(t)u\big)(x_k)
 \]
 for all $k\in \N$. Since, $\SS(t)u$ and $\EE_\infty u$ are both continuous and the set $\{x_k\,|\,  k\in \N\}$ is dense in $M$, it follows that $\SS(t)u= \EE_\infty u$, which shows that
 \[
  \EE_{\pi_n}u\nearrow \SS(t)u \quad \text{as } n\to \infty.
 \]
\end{proof}

We obtain the following main theorem.

\begin{theorem}\label{main1}
 The family $\SS$ is a Feller semigroup of sublinear operators and the least upper bound of the family $(S_\la)_{\la\in \La}$.
\end{theorem}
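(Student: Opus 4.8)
The plan is to read off everything except the semigroup property and the extremality from the work already done, and then to concentrate on those two points. By Lemma \ref{lem:observ}, for each $t\ge 0$ the operator $\SS(t)\colon \UC_\ka\to\UC_\ka$ is well-defined, Lipschitz (hence continuous), sublinear, monotone and continuous from below, and $\SS(0)=\id$ holds by the definition \eqref{def:Nisio}. Thus conditions (i) and (iii) of Definition \ref{def:feller} and the sublinearity are already in hand, and what remains is the semigroup property $\SS(s+t)=\SS(s)\SS(t)$ of condition (ii), together with the two halves of the least-upper-bound claim: that $\SS$ dominates every $S_\la$, and that $\SS$ lies below every other upper bound.

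The semigroup property is the heart of the matter. First I would record the concatenation identity: for $\pi_1\in P_s$ and $\pi_2\in P_t$, writing $\pi_1\ast\pi_2:=\pi_1\cup(s+\pi_2)\in P_{s+t}$ (where $s+\pi_2=\{s+r:r\in\pi_2\}$), the composition rule defining $\EE_\pi$ yields $\EE_{\pi_1\ast\pi_2}u=\EE_{\pi_1}\EE_{\pi_2}u$. For the inequality $\SS(s)\SS(t)u\le\SS(s+t)u$, I fix $\pi_1\in P_s$, use Lemma \ref{seq} to pick $\pi_2^n\in P_t$ with $\EE_{\pi_2^n}u\nearrow\SS(t)u$, and invoke the continuity from below of $\EE_{\pi_1}$ (Lemma \ref{lip12}) to get $\EE_{\pi_1}\EE_{\pi_2^n}u\nearrow\EE_{\pi_1}\SS(t)u$; since $\EE_{\pi_1}\EE_{\pi_2^n}u=\EE_{\pi_1\ast\pi_2^n}u\le\SS(s+t)u$, passing to the limit and taking the supremum over $\pi_1\in P_s$ gives the claim. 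For the reverse inequality, I take any $\pi\in P_{s+t}$; by the refinement monotonicity \eqref{monotone} I may assume $s\in\pi$, split $\pi$ into $\pi'\in P_s$ and $\pi''\in P_t$ (the tail shifted back by $s$) so that $\EE_\pi u=\EE_{\pi'}\EE_{\pi''}u$, and then estimate $\EE_{\pi''}u\le\SS(t)u$ followed by monotonicity of $\EE_{\pi'}$ to obtain $\EE_\pi u\le\EE_{\pi'}\SS(t)u\le\SS(s)\SS(t)u$; the supremum over $\pi$ finishes it (the cases $s=0$ or $t=0$ being trivial). I expect the first inequality to be the main obstacle, because there the inner supremum $\SS(t)u$ must be passed through $\EE_{\pi_1}$: this is precisely where strong continuity is used in Nisio \cite{Nisio} and in Denk et al.~\cite{dkn}, and the role of Lemma \ref{seq} together with continuity from below is exactly to replace that hypothesis.

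Finally, for extremality, the domination $\SS\ge S_\la$ is immediate: the two-point partition $\{0,t\}\in P_t$ gives $\SS(t)u\ge\EE_{\{0,t\}}u=\EE_t u=\sup_{\mu\in\La}S_\mu(t)u\ge S_\la(t)u$ pointwise. Conversely, let $T$ be any upper bound; it suffices to show $\EE_\pi u\le T(t)u$ for every $\pi\in P_t$, which I would prove by induction on the number of subintervals of $\pi$. The base case $\pi=\{0,t\}$ is $\EE_t u=\sup_{\la}S_\la(t)u\le T(t)u$. For the inductive step I write $\EE_\pi u=\EE_h\EE_{\pi'}u$ with $\pi'\in P_{t-h}$, use the inductive hypothesis $\EE_{\pi'}u\le T(t-h)u$ and the monotonicity of $\EE_h$ to get $\EE_\pi u\le\EE_h\big(T(t-h)u\big)=\sup_\la S_\la(h)\big(T(t-h)u\big)$, and then apply $S_\la(h)\le T(h)$ to the element $T(t-h)u$ together with the semigroup property of $T$ to bound this by $T(h)T(t-h)u=T(t)u$. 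Taking the supremum over $\pi\in P_t$ yields $\SS(t)u\le T(t)u$. It is worth noting that this step uses only the monotonicity of the $\EE_h$ (already available) and that $T$ is a semigroup dominating the $S_\la$, not any monotonicity of $T$ itself.
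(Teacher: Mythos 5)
Your proposal is correct and follows essentially the same route as the paper's proof: the inequality $\SS(s+t)u\le\SS(s)\SS(t)u$ via inserting $s$ into a partition and splitting it, the reverse inequality via Lemma \ref{seq} combined with continuity from below of $\EE_{\pi_1}$ (Lemma \ref{lip12}), the Feller properties read off from Lemma \ref{lem:observ}, and the extremality argument composing $\EE_h\le T(h)$ along partitions. The only cosmetic difference is that you spell out the induction over the subintervals of $\pi$ in the extremality step, which the paper compresses into ``since $S_\la$ and $T$ are semigroups.''
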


\begin{proof}
We first show that, for all $s,t\geq 0$,
\begin{equation}\label{dpp1}
 \SS(s+t)=\SS(s)\SS(t).
\end{equation}
 If $s=0$ or $t=0$ the statement is trivial. Therefore, let $s,t>0$, $u\in \UC_\ka$, $\pi_0\in P_{s+t}$, and $\pi:=\pi_0\cup \{s\}$. Then, $\pi\in P_{s+t}$ with $\pi_0\subset \pi$ and, by \eqref{monotone}, $\EE_{\pi_0}u\leq \EE_{\pi}u$. Let $m\in \N$, $0=t_0<t_1<\ldots t_m=s+t$ with $\pi=\{t_0,\ldots, t_m\}$, and $i\in \{1,\ldots, m\}$ with $t_i=s$. Then, $\pi_1:=\{t_0,\ldots, t_i\}\in P_s$ and $\pi_2:=\{t_i-s,\ldots, t_n-s\}\in P_t$ with
 \[
  \EE_{\pi_1}=\EE_{t_1-t_0}\cdots \EE_{t_i-t_{i-1}}\quad \text{and}\quad \EE_{\pi_2}=\EE_{t_{i+1}-t_i}\cdots \EE_{t_m-t_{m-1}}.
 \]
 We thus obtain that
 \begin{align*}
  \EE_{\pi_0}u&\leq \EE_\pi u=\EE_{t_1-t_0}\cdots \EE_{t_m-t_{m-1}}u=\big(\EE_{t_1-t_0}\cdots \EE_{t_i-t_{i-1}}\big)\big(\EE_{t_{i+1}-t_i}\cdots \EE_{t_m-t_{m-1}}u\big)\\
  &=\EE_{\pi_1}\EE_{\pi_2}u\leq \EE_{\pi_1}\SS(t)u\leq \SS(s)\SS(t)u.
 \end{align*}
 Taking the supremum over all $\pi_0\in P_{s+t}$ yields that $\SS(s+t)u\leq \SS(s)\SS(t)u$.
 
 Now, let $(\pi_n)_{n\in \N}\subset P_t$ with $\EE_{\pi_n}u\nearrow \SS(t)u$ as $n\to \infty$ (see Lemma \ref{seq}) and fix $\pi_0\in P_s$. Then, for all $n\in \N$,
 \[
  \pi_n':=\pi_0\cup \{s+\tau\colon \tau \in \pi_n\}\in P_{s+t}\quad \text{with}\quad \EE_{\pi_n'}=\EE_{\pi_0}\EE_{\pi_n}.
 \]
 As $\EE_{\pi_0}$ is continuous from below, it follows that
 \[
  \EE_{\pi_0}\big(\SS(t)u\big)=\lim_{n\to \infty}\EE_{\pi_0}\EE_{\pi_n}u=\lim_{n\to \infty}\EE_{\pi_n'}u\leq \SS(s+t)u.
 \]
 Taking the supremum over all $\pi_0\in P_s$, we get that $\SS(s)\SS(t)u\leq \SS(s+t)u$, and therefore \eqref{dpp1} follows.

 From the definition of $\SS$ in Equation \eqref{def:Nisio} and Lemma \ref{lem:observ}, we now may conclude that $\SS$ defines a Feller semigroup of sublinear operators. It remains to show that $\SS$ is the least upper bound of the family $(S_\la)_{\la\in \La}$. To this end, let $u\in \UC_\ka$, $x\in M$, and $T$ be an upper bound of the family $(S_\la)_{\la\in \La}$, i.e $\big(S_\la(t)u\big)(x)\leq \big(T(t)u\big)(x)$ for all $\la \in \La$, $u\in \UC_\ka$, $t\geq 0$ and $x\in M$. Then,
 \[
  \big(S_\la (h)u\big)(x)\leq \big(\EE_h u\big)(x) \leq \big(T(h)u\big)(x)\quad\text{for all }\la \in \La\text{ and }h\geq 0.
 \]
  Since $S_\la$ and $T$ are semigroups, it follows that
 \[
  \big(S_\la (t)u\big)(x)\leq \big(\EE_\pi u\big)(x) \leq \big(T(t)u\big)(x)\quad\text{for all }\la \in \La,\,t\geq 0,\text{ and }\pi\in P_t. 
 \]
 Taking the supremum over all $\pi\in P_t$, we obtain that
  \[
  \big(S_\la (t)u\big)(x)\leq \big(\SS(t) u\big)(x) \leq \big(T(t)u\big)(x)\quad \text{for all }\la \in \La\text{ and }t\geq 0.
 \]
\end{proof}

The remainder of this section is devoted to show that the approximation result of Lemma \ref{seq}, where the approximating sequence was dependent on the function $u\in \UC_\ka$, can be made stronger under the additional assumption that the map
\begin{equation}\label{ass.approx}
 [0,\infty)\to \UC_\ka,\quad h\mapsto \EE_h u
\end{equation}
is continuous for all $u\in \UC_\ka$. More precisely, under this condition every sequence of partitions with mesh size tending to $0$ can be used for the approximation of the semigroup envelope. Note that \eqref{ass.approx} is, for example, implied by the condition that
\[
 \sup_{\la\in \La}\|S_\la(h)u-u\|_\ka\to 0\quad \text{as }h\to 0,
\]
for all $u\in \Lipb$, which, in most applications, is satisfied. The following lemma shows that $\EE_\pi$ depends continuously on the partition $\pi\in P$.

\begin{lemma}\label{strongcontJ}\
 Assume that the map \eqref{ass.approx} is continuous for all $u\in \UC_\ka$. Let $m\in \N$ and $\pi=\{t_0,t_1,\ldots, t_m\}\in P$ with $0=t_0<\ldots <t_m$. For each $n\in \N$ let $\pi_n=\{t_0^n,t_1^n,\ldots, t_m^n\}\in P$ with $0=t_0^n<t_1^n<\ldots < t_m^n$ and $t_i^n\to t_i$ as $n\to \infty$ for all $i\in \{1,\ldots, m\}$. Then, for all $u\in \UC_\ka$ we have that
 \[
  \|\EE_{\pi}u-\EE_{\pi_n}u\|_\ka\to 0,\quad n\to \infty.
 \]
\end{lemma}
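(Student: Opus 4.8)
The plan is to reduce the statement to a telescoping estimate in which only one factor of the composition $\EE_\pi = \EE_{t_1-t_0}\cdots\EE_{t_m-t_{m-1}}$ is changed at a time, so that the continuity hypothesis \eqref{ass.approx} controls each individual change while the Lipschitz bounds of Lemma \ref{lip12} control the propagation of the resulting errors through the remaining (nonlinear) operators. Since there are only finitely many factors, no uniformity in the index will be required.

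First I would introduce the increments $h_i := t_i - t_{i-1}$ and $h_i^n := t_i^n - t_{i-1}^n$ for $i\in\{1,\ldots,m\}$, so that $\EE_\pi u = \EE_{h_1}\cdots\EE_{h_m}u$ and $\EE_{\pi_n}u = \EE_{h_1^n}\cdots\EE_{h_m^n}u$. Because $t_i^n\to t_i$ for every $i$, each increment converges, $h_i^n\to h_i$ as $n\to\infty$, and all of them are strictly positive by the ordering of the nodes.

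Next I would interpolate between the two products one factor at a time. For $i\in\{0,1,\ldots,m\}$ set
\[
 W_i := \EE_{h_1^n}\cdots\EE_{h_i^n}\,\EE_{h_{i+1}}\cdots\EE_{h_m}u,
\]
so that $W_0 = \EE_\pi u$ and $W_m = \EE_{\pi_n}u$ (the dependence of $W_i$ on $n$ is suppressed). Writing $v_i := \EE_{h_{i+1}}\cdots\EE_{h_m}u\in\UC_\ka$ for the fixed, $n$-independent tail (with $v_m=u$), consecutive terms differ only in the $i$-th factor:
\[
 W_{i-1}-W_i = \EE_{h_1^n}\cdots\EE_{h_{i-1}^n}\big(\EE_{h_i}v_i\big) - \EE_{h_1^n}\cdots\EE_{h_{i-1}^n}\big(\EE_{h_i^n}v_i\big).
\]
The leading composition equals $\EE_\sigma$ for $\sigma=\{t_0^n,\ldots,t_{i-1}^n\}$ with $\max\sigma = t_{i-1}^n$, so by Lemma \ref{lip12} it is Lipschitz with constant $e^{\al t_{i-1}^n}$, whence
\[
 \|W_{i-1}-W_i\|_\ka \leq e^{\al t_{i-1}^n}\,\|\EE_{h_i}v_i - \EE_{h_i^n}v_i\|_\ka.
\]

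Finally I would let $n\to\infty$ in each of the $m$ terms. The prefactor $e^{\al t_{i-1}^n}$ stays bounded since $t_{i-1}^n\to t_{i-1}$, and $\|\EE_{h_i}v_i - \EE_{h_i^n}v_i\|_\ka\to 0$ by the continuity assumption \eqref{ass.approx} applied to the fixed function $v_i$, using $h_i^n\to h_i$. Summing the telescoping identity $\EE_\pi u-\EE_{\pi_n}u=\sum_{i=1}^m(W_{i-1}-W_i)$ and applying the triangle inequality then yields $\|\EE_\pi u - \EE_{\pi_n}u\|_\ka\to 0$. The only point that requires care is conceptual rather than computational: the hypothesis \eqref{ass.approx} must be invoked for each \emph{fixed} tail $v_i$ separately, not for $u$ alone, and the associated error is transported forward through the nonlinear maps $\EE_{h_j^n}$ purely via their common contraction-type constant $e^{\al t_{i-1}^n}$ supplied by Lemma \ref{lip12}; the finiteness of the partition is what makes this bookkeeping harmless.
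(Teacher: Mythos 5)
Your proof is correct and is essentially the paper's own argument written out in full: the paper identifies $\EE_\pi u$ with a map on $S^m\subset\R^m$ and asserts its continuity from exactly the two ingredients you use, namely the continuity of $h\mapsto\EE_h u$ from \eqref{ass.approx} and the Lipschitz bound $\|\EE_h u-\EE_h v\|_\ka\leq e^{\al h}\|u-v\|_\ka$ of Lemma \ref{lip12}. Your one-factor-at-a-time telescoping decomposition is precisely the bookkeeping the paper leaves implicit, so the two proofs coincide in substance.
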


\begin{proof}
 First note that the set of all partitions with cardinality $m+1$ can be identified with the set
 \[
  S^m:=\big\{(s_1,\ldots, s_m)\in \R^m\, \big|\, 0<s_1<\ldots<s_m\big\}\subset \R^m.
 \]
 Therefore, the assertion is equivalent to the continuity of the map
 \begin{equation}\label{contmap}
  S^m\to \UC_\ka,\quad (s_1,\ldots, s_m)\to \EE_{\{0,s_1,\ldots, s_m\}}u.
 \end{equation}
 Since the mapping $[0,\infty)\to \UC_\ka,\; h\mapsto \EE_h u$ is continuous for all $u\in \UC_\ka$, and $\|\EE_h u-\EE_h v\|_\ka\leq e^{\al h}\|u-v\|_\ka$ for all $h\geq 0$ and $u,v\in \UC_\ka$, it follows that \eqref{contmap} is continuous.
\end{proof}

Let $u\in \UC_\ka$. In the following, we consider the limit of $\EE_\pi u$ when the mesh size
\[
 |\pi|_\infty := \max_{j=1,\dots,m} (t_j-t_{j-1})
\]
of the partition $\pi=\{t_0,t_1,\dots,t_m\}\in P$ with $0=t_0< t_1< \ldots < t_m$ tends to zero. For the sake of completeness, we define $|\{0\}|_\infty:=0$. The following lemma shows that $\SS(t)u$ can be obtained by a pointwise monotone approximation with finite partitions letting the mesh size tend to zero.

\begin{proposition}\label{monlimit}
 Assume that the map \eqref{ass.approx} is continuous for all $u\in \UC_\ka$. Let $t\geq 0$ and $(\pi_n)_{n\in \N}\subset P_t$ with $\pi_n\subset \pi_{n+1}$ for all $n\in \N$ and $|\pi_n|_\infty\searrow 0$ as $n\to \infty$. Then, for all $u\in \UC_\ka$,
 \[
  \EE_{\pi_n}u\nearrow \SS(t)u \quad \text{as }n\to \infty.
 \]
 In particular,
 \[
  \SS(t)u=\sup_{n\in \N} \EE_{\frac{t}{n}}^n u=\lim_{n\to \infty}\EE_{2^{-n}t}^{2^n}u\quad \text{for all }u\in \UC_\ka,
 \]
 where the supremum and the limit are to be understood in a pointwise sense.
\end{proposition}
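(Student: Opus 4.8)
The plan is to prove the two pointwise inequalities $\EE_{\pi_n}u\le \SS(t)u$ and $\SS(t)u\le \lim_{n\to\infty}\EE_{\pi_n}u$, the limit existing by monotonicity. The monotonicity $\EE_{\pi_n}u\le \EE_{\pi_{n+1}}u$ is immediate from the nesting $\pi_n\subset\pi_{n+1}$ together with \eqref{monotone}, and since each $\pi_n\in P_t$, the definition \eqref{def:Nisio} gives $\EE_{\pi_n}u\le \SS(t)u$. Hence the pointwise increasing limit $v:=\sup_{n\in\N}\EE_{\pi_n}u$ exists and satisfies $v\le \SS(t)u$. It then remains to show $\EE_\pi u\le v$ for every fixed $\pi\in P_t$, since taking the supremum over $\pi\in P_t$ yields $\SS(t)u\le v$ and thus equality. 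The case $t=0$ is trivial, so I assume $t>0$.

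The crux — and the step I expect to require the most care — is this reverse inequality, because the monotonicity \eqref{monotone} only helps when one partition is a \emph{subset} of another, whereas an arbitrary $\pi$ need not be comparable to the given $\pi_n$. The idea is therefore to approximate $\pi$ from \emph{within} $\pi_n$. Fix $\pi=\{s_0,\ldots,s_k\}$ with $0=s_0<\ldots<s_k=t$, and for each $n$ and each $i$ set $s_i^n:=\max\{r\in\pi_n\colon r\le s_i\}$, with $\pi_n':=\{s_0^n,\ldots,s_k^n\}$. By construction $\pi_n'\subset\pi_n$, so \eqref{monotone} gives $\EE_{\pi_n'}u\le \EE_{\pi_n}u$, which is the correct direction. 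Since $0,t\in\pi_n$ we have $s_0^n=0$ and $s_k^n=t$, and since $|\pi_n|_\infty\searrow 0$ the estimate $s_i-|\pi_n|_\infty<s_i^n\le s_i$ holds, so $s_i^n\to s_i$ for every $i$. As the nodes $s_i$ are strictly separated, for $n$ large enough the $s_0^n<s_1^n<\ldots<s_k^n$ are strictly increasing, so $\pi_n'$ is a genuine partition of the same cardinality $k+1$ as $\pi$, with nodes converging to those of $\pi$.

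It is exactly here that the continuity of \eqref{ass.approx} enters, through Lemma \ref{strongcontJ}: it yields $\|\EE_{\pi_n'}u-\EE_\pi u\|_\ka\to 0$, hence $\EE_{\pi_n'}u\to\EE_\pi u$ pointwise. Combining this with $\EE_{\pi_n'}u\le \EE_{\pi_n}u$ and letting $n\to\infty$ gives $\EE_\pi u\le \lim_{n\to\infty}\EE_{\pi_n}u=v$ pointwise, as needed. Together with the first step this proves $\EE_{\pi_n}u\nearrow \SS(t)u$.

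Finally, the two displayed identities follow by specialisation. For the dyadic partitions $\rho_n:=\{j2^{-n}t\colon 0\le j\le 2^n\}$ one has $\rho_n\subset\rho_{n+1}$, $|\rho_n|_\infty=2^{-n}t\searrow 0$ and $\EE_{\rho_n}u=\EE_{2^{-n}t}^{2^n}u$, so the main statement gives $\EE_{2^{-n}t}^{2^n}u\nearrow \SS(t)u$, which is the second identity. For the first, each uniform partition $\{jt/n\colon 0\le j\le n\}$ lies in $P_t$, so $\EE_{t/n}^n u\le \SS(t)u$ for all $n$; since the dyadic indices form a subsequence, $\SS(t)u=\sup_{n\in\N}\EE_{2^{-n}t}^{2^n}u\le \sup_{n\in\N}\EE_{t/n}^n u\le \SS(t)u$, giving $\SS(t)u=\sup_{n\in\N}\EE_{t/n}^n u$.
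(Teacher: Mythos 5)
Your proof is correct and follows essentially the same route as the paper's: bound $\EE_{\pi_n}u$ by $\SS(t)u$ via \eqref{def:Nisio}, then for fixed $\pi\in P_t$ approximate $\pi$ by a sub-partition $\pi_n'\subset\pi_n$ with converging nodes and apply Lemma \ref{strongcontJ} together with \eqref{monotone}, finishing with the same dyadic specialisation. Your explicit construction $s_i^n:=\max\{r\in\pi_n\colon r\le s_i\}$, including the check that the nodes are eventually strictly increasing, merely spells out a detail the paper leaves implicit.
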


\begin{proof}
 For $t=0$ the statement is trivial. Therefore, assume that $t>0$, and let
 \[
  \big(\EE_\infty u\big)(x):=\sup_{n\in \N} \big(\EE_{\pi_n}u\big)(x)\quad \text{for }u\in \UC_\ka\text{ and }x\in M.
 \]
  As in the proof of Lemma \ref{seq}, the map $\EE_\infty\colon \UC_\ka\to \UC_\ka$ is well-defined. Let $u\in \UC_\ka$. Since $\pi_n\subset \pi_{n+1}$ for all $n\in \N$, it follows that $\EE_{\pi_n}u\nearrow \EE_\infty u$ as $n\to \infty$. Since $(\pi_n)_{n\in \N}\subset P_t$, we obtain that
 \[
  \EE_\infty u \leq \SS(t)u.
 \]
 Let $\pi=\{t_0,t_1,\ldots, t_m\}\in P_t$ with $m\in \N$ and $0=t_0<t_1<\ldots <t_m=t$. Since $|\pi_n|_\infty\searrow 0$ as $n\to \infty$, we may w.l.o.g.~assume that $\#\pi_n\geq m+1$ for all $n\in \N$. Let $0=t_0^n<t_1^n<\ldots <t_m^n=t$ for all $n\in \N$ with $\pi_n':=\{t_0^n,t_1^n,\ldots, t_m^n\}\subset \pi_n$ and $t_i^n\to t_i$ as $n\to \infty$ for all $i\in \{1,\ldots, m\}$. Then, by Lemma \ref{strongcontJ},
 \[
  \|\EE_\pi u-\EE_{\pi_n'}u\|_\ka\to 0 \quad \text{as }n\to \infty.
 \]
 Therefore,
 \[
  \EE_\infty u-\EE_\pi u\geq \EE_{\pi_n}u-\EE_\pi u\geq \EE_{\pi_n'}u-\EE_\pi u\to 0\quad \text{as }n\to \infty,
 \]
 showing that $\EE_\infty u\geq \EE_\pi u$. Taking the supremum over all $\pi\in P_t$, we obtian that $\EE_\infty u=\SS(t) u$.
 
 Now, let $\pi_n:=\big\{\tfrac{kt}{2^n}\, \big|\, k\in \{0,\ldots ,2^n\}\big\}$ for all $n\in \N$. Then,
 \[
  \SS(t)u=\lim_{m\to \infty}\EE_{\pi_m}u=\lim_{m\to \infty}\EE_{2^{-m}t}^{2^m}u\leq \sup_{n\in \N} \EE_{\frac{t}{n}}^n u\leq \SS(t)u,
 \]
 where we used the basic fact that $n=2^m\in \N$ for all $m\in \N$.
\end{proof}

\section{Strong continuity}\label{sec:strongcont}

Let $\SS$ be the Feller semigroup from the previous section, i.e. the semigroup envelope of the family $(S_\la)_{\la\in \La}$. The aim of this section is to give conditions that ensure the strong continuity of the semigroup envelope $\SS$.

\begin{remark}\label{reminvariant}
 Let $D\subset \UC_\ka$ be the set of \textit{all} $u\in \UC_\ka$, for which the map
 \[
  [0,\infty)\to \UC_\ka,\quad  t\mapsto \SS(t)u
 \]
 is continuous. Then, by the semigroup property \eqref{dpp1}, $$[0,\infty)\to \UC_\ka,\quad   s\mapsto \SS(s)\SS(t)u= \SS(s+t)u$$ is continuous for all $u\in D$. Therefore, the set $D$ is invariant under the semigroup $\SS$, i.e. $\SS(t)u\in D$ for all $u\in D$ and all $t\geq 0$.
\end{remark}

\begin{lemma}\label{equivscont}
 Let $u\in \UC_\ka$. Then, the following statements are equivalent:
 \begin{enumerate}
  \item[(i)] $\lim_{h\to 0}\|\SS(h)u-u\|_\ka=0$.
  \item[(ii)] The map $[0,\infty)\to \UC_\ka,\;  t\mapsto \SS(t)u$ is continuous.
 \end{enumerate}

\end{lemma}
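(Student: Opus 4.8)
The plan is to dispose of (ii) $\Rightarrow$ (i) at once and then concentrate on (i) $\Rightarrow$ (ii), which carries the content. Since $\SS(0)u=u$, the implication (ii) $\Rightarrow$ (i) is immediate: continuity of $t\mapsto \SS(t)u$ at $t=0$ means precisely that $\|\SS(h)u-\SS(0)u\|_\ka=\|\SS(h)u-u\|_\ka\to 0$ as $h\to 0$.

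For (i) $\Rightarrow$ (ii) I would fix $t_0\ge 0$ and prove right and left continuity separately, in each case combining the semigroup property from Theorem \ref{main1} with the Lipschitz estimate $\|\SS(t)u-\SS(t)v\|_\ka\le e^{\al t}\|u-v\|_\ka$ from Lemma \ref{lem:observ}. For right continuity, for $h>0$ I would factor $\SS(t_0+h)=\SS(t_0)\SS(h)$ and estimate
\[
 \|\SS(t_0+h)u-\SS(t_0)u\|_\ka=\big\|\SS(t_0)\big(\SS(h)u\big)-\SS(t_0)u\big\|_\ka\le e^{\al t_0}\|\SS(h)u-u\|_\ka,
\]
which tends to $0$ as $h\searrow 0$ by (i).

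For left continuity at $t_0>0$, for $0<h<t_0$ I would instead use $\SS(t_0)=\SS(t_0-h)\SS(h)$ and estimate
\[
 \|\SS(t_0)u-\SS(t_0-h)u\|_\ka=\big\|\SS(t_0-h)\big(\SS(h)u\big)-\SS(t_0-h)u\big\|_\ka\le e^{\al(t_0-h)}\|\SS(h)u-u\|_\ka.
\]
Since $e^{\al(t_0-h)}\le e^{|\al|t_0}$ is bounded uniformly for $h\in(0,t_0)$ and $\|\SS(h)u-u\|_\ka\to 0$ by (i), the right-hand side vanishes as $h\searrow 0$. Combining the two one-sided limits yields continuity of $t\mapsto\SS(t)u$ at every $t_0\ge 0$, which is (ii).

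I do not anticipate a genuine obstacle: the only point needing minor care is keeping the Lipschitz prefactor bounded in the left-continuity step, which is handled by the observation that $h\mapsto e^{\al(t_0-h)}$ stays bounded on $(0,t_0)$. The essential mechanism is that the semigroup property upgrades continuity at the single point $t=0$ to continuity everywhere, with the Lipschitz continuity of each $\SS(t)$ supplying the uniform control that makes the one-sided estimates go through.
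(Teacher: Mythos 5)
Your proof is correct and takes essentially the same route as the paper: both upgrade continuity at $t=0$ to continuity everywhere by combining the semigroup property $\SS(s+t)=\SS(s)\SS(t)$ with the Lipschitz estimate $\|\SS(\tau)v-\SS(\tau)w\|_\ka\leq e^{\al\tau}\|v-w\|_\ka$ applied to $v=\SS(h)u$, $w=u$. The only cosmetic difference is that the paper treats both one-sided limits in a single estimate with $\tau:=s\wedge t$ (normalising $\al\geq 0$ w.l.o.g.), whereas you split into right and left continuity and bound the prefactor by $e^{|\al|t_0}$.
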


\begin{proof}
 Clearly, (ii) implies (i). Therefore, assume that $\lim_{h\to 0}\|\SS(h)u-u\|_\ka=0$. Let $t\geq 0$ and $\ep>0$. W.l.o.g. we may assume that in (A2) we have $\al\geq 0$. By assumption, there exists some $\de>0$ such that $\|\SS(h)u-u\|_\ka<e^{-\al t}\ep$ for all $h\in [0,\de)$. Now, let $s\geq 0$ with $|t-s|<\de$. Then, for $\tau:=s\wedge t$,
 \begin{align*}
  \|\SS(t)u-\SS(s)u\|_\ka&=\big\|\SS(\tau)\big(\SS(|t-s|)u\big)-\SS(\tau)u\big\|_\ka\\
  &\leq e^{\al \tau}\big\|\SS\big(|t-s|\big)u-u\big\|_\ka<\ep,
 \end{align*}
 where we used the Lipschitz continuity of $\SS(\tau)$ with Lipschitz constant $e^{\al \tau}$.
\end{proof}

\begin{remark}\label{remDclosed}
 Let $D\subset \UC_\ka$ arbitrary, and assume that $\SS$ is strongly continuous on $D$. Then, $\SS$ is also strongly continuous on the closure $\overline D$ of $D$. In order to see this, let $u\in \UC_\ka$ and $(u_n)_{n\in \N}\subset D$ with $\|u_n-u\|_\ka\to 0$ as $n\to \infty$. W.l.o.g. we may assume that $\al\geq 0$. Let $\ep>0$. Then, there exists some $n_0\in \N$ such that $\|u_{n_0}-u\|_\ka \leq \tfrac{\ep}{3}e^{-\al}$. Since $u_{n_0}\in D$, there exists some $\de\in (0,1]$ such that $\|\SS(h)u_{n_0}-u_{n_0}\|_\ka <\tfrac{\ep}{3}$ for all $h\in [0,\de)$. Hence, for $h\in [0,\de)$, it follows that
 \[
  \|\SS(h)u-u\|_\ka \leq \frac{2\ep}{3} + \|\SS(h)u_{n_0}-u_{n_0}\|_\ka<\ep.
 \]
 Now, the previous lemma implies that $[0,\infty)\to \UC_\ka,\; t\mapsto \SS(t)u$ is continuous.
\end{remark}

We start with the first result ensuring the strong continuity of the semigroup envelope $\SS$.

\begin{proposition}\label{critstrongcont1}
 Assume that, for every $\de>0$, there exists a family of functions $(\varphi_x)_{x\in M}\subset \UC_\ka$ satisfying the following:
 \begin{enumerate}
  \item[(i)] $0\leq \varphi_x(y)\leq 1$ for all $y\in M$, $\varphi_x(x)=0$, $\varphi_x(y)=1$ for all $y\in M$ with $d(x,y)\geq \de$,
  \item[(ii)] $\sup_{x\in M} \ka(x)\big[\big(\SS(h)\varphi_x\big)(x)\big] \to 0$ as $h\searrow 0$.
  \end{enumerate}
 Then, the semigroup $\SS$ is strongly continuous.
\end{proposition}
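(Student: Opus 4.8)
The plan is to reduce the claim to a dense subspace and then to localise the increment $\SS(h)u-u$ near each point by sandwiching $u$ between multiples of the bump functions $\varphi_x$. By Lemma~\ref{equivscont}, strong continuity at a given $u$ is equivalent to $\|\SS(h)u-u\|_\ka\to 0$ as $h\searrow 0$, and by Remark~\ref{remDclosed} it is enough to establish this on a dense subset of $\UC_\ka$. Since $\Lipb$ is dense in $\UC_\ka$, I would fix $u\in\Lipb$, set $L:=\|u\|_{\Lip}$, and prove $\|\SS(h)u-u\|_\ka\to 0$.

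The key step is a two-sided pointwise sandwich. For a fixed $\de>0$ and the associated family $(\varphi_x)_{x\in M}$ provided by the hypothesis, I claim that for every $x\in M$ one has, as an inequality between functions on $M$,
\[
u(x)-L\de-2\|u\|_\infty\varphi_x\ \le\ u\ \le\ u(x)+L\de+2\|u\|_\infty\varphi_x,
\]
where $u(x)\pm L\de$ denotes the corresponding constant function. Indeed, for $y$ with $d(x,y)<\de$ this follows from $|u(x)-u(y)|\le L\,d(x,y)<L\de$ together with $\varphi_x\ge 0$, while for $y$ with $d(x,y)\ge\de$ it follows from $\varphi_x(y)=1$ and $|u(y)|\le\|u\|_\infty$. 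Applying the monotone operator $\SS(h)$ (Lemma~\ref{lem:observ}) preserves both inequalities.

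To evaluate the resulting bounds at $x$ I would use two algebraic properties of $\SS(h)$: sublinearity (Lemma~\ref{lem:observ}) and the \emph{cash invariance} $\SS(h)(f+c)=\SS(h)f+c$ for $c\in\R$. The latter is not a consequence of the three abstract semigroup properties alone, but is immediate from the construction: each linear $S_\la(h)$ satisfies $S_\la(h)(f+c)=S_\la(h)f+c$ by (A1), and this identity passes to the suprema $\EE_h$, to the compositions $\EE_\pi$, and hence to $\SS(h)=\sup_{\pi\in P_h}\EE_\pi$. Using cash invariance and positive homogeneity on the upper estimate gives $(\SS(h)u)(x)\le u(x)+L\de+2\|u\|_\infty(\SS(h)\varphi_x)(x)$, while the lower estimate together with $\SS(h)(-g)\ge-\SS(h)g$ (from subadditivity and $\SS(h)0=0$) gives $(\SS(h)u)(x)\ge u(x)-L\de-2\|u\|_\infty(\SS(h)\varphi_x)(x)$. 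Since $\SS(h)\varphi_x\ge 0$, these combine into
\[
\big|(\SS(h)u)(x)-u(x)\big|\le L\de+2\|u\|_\infty(\SS(h)\varphi_x)(x)\qquad(x\in M).
\]

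Finally I would multiply by $\ka(x)$, take the supremum over $x$, and use that $\ka$ is bounded, obtaining $\|\SS(h)u-u\|_\ka\le L\de\,\|\ka\|_\infty+2\|u\|_\infty\sup_{x\in M}\ka(x)(\SS(h)\varphi_x)(x)$. By hypothesis~(ii) the last term tends to $0$ as $h\searrow 0$, so $\limsup_{h\searrow 0}\|\SS(h)u-u\|_\ka\le L\de\,\|\ka\|_\infty$; letting $\de\searrow 0$ (choosing a fresh family $(\varphi_x)$ for each $\de$) yields $\lim_{h\searrow 0}\|\SS(h)u-u\|_\ka=0$, and Lemma~\ref{equivscont} together with Remark~\ref{remDclosed} completes the proof. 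I expect the main obstacle to be the lower bound: in contrast to the strongly continuous setting, one cannot estimate $\SS(h)u$ from below by some $S_\la(h)u\approx u$, since no strong continuity of the individual semigroups $S_\la$ is assumed, so the lower bound must be extracted purely from sublinearity and cash invariance—which is precisely why the construction-dependent identity $\SS(h)(f+c)=\SS(h)f+c$ is indispensable here.
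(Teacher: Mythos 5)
Your proposal is correct and follows essentially the same route as the paper's proof: both localise via the cut-off functions $\varphi_x$, control the near part ($d(x,y)<\de$) by the Lipschitz bound on $u$ and the far part by $2\|u\|_\infty\varphi_x$ together with hypothesis (ii), exploit the constant-preservation/cash-invariance of $\SS(h)$ (the paper invokes it in the step $\SS(h)u-u(x)=\SS(h)\big(u-u(x)\big)$, justified by $\SS(h)1=1$ and the construction), and then extend from $\Lipb$ to $\UC_\ka$ by density via Remark \ref{remDclosed}. The only cosmetic difference is that you use a two-sided monotone sandwich where the paper uses the estimate $|\SS(h)v|\leq \SS(h)|v|$ and subadditivity on the splitting $|u-u(x)|\leq (1-\varphi_x)|u-u(x)|+\varphi_x|u-u(x)|$.
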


\begin{proof}
 Let $u\in \Lipb\sm\{0\}$ and $\ep>0$. Then, since $\ka$ is bounded, there exists some $\de>0$ such that
 \[
   \ka (y)|u(y)-u(x)|\leq \tfrac{\ep}{2e^{\al}}\quad \text{for all }x,y\in M\text{ with }d(x,y)<\de.
 \]
 By assumption, there exists a family $(\varphi_x)_{x\in M}\subset \UC_\ka$ with $0\leq \varphi_x(y)\leq 1$ for all $y\in M$, $\varphi_x(x)=0$, $\varphi_x(y)=1$ for all $y\in M$ with $d(x,y)\geq \de$, and some $h_0\in (0,1]$ such that
 \[
  \sup_{x\in M} \ka(x)\big[\big(\SS(h)\varphi_x\big)(x)\big]< \frac{\ep}{4\|u\|_\infty}\quad\text{for all }h\in [0,h_0). 
 \]
 For all $h\in (0,1]$ and $x\in M$,
 \begin{align*}
  \Big\|\SS(h)\big((1-\varphi_x)|u-u(x)|\big)\Big\|_\ka&\leq e^\al\big\|(1-\varphi_x)|u-u(x)|\big\|_\ka\\
  &\leq e^\al\sup_{\substack{y\in M\\d(x,y)\leq \de}} \ka(y)|u(y)-u(x)|\leq\frac\ep 2.
 \end{align*}
Hence, for all $h\in [0,h_0)$ and $x\in M$, since $\SS(h)1=1$,
 \begin{align*}
  \ka (x)\big|\big(\SS(h)u\big)(x)-u(x)\big|&=\ka(x)\big|\big(\SS(h)(u-u(x))\big)(x)\big|\\
  &\leq \ka (x)\big(\SS(h)|u-u(x)|\big)(x)\\
  &\leq \Big\|\SS(h)\big((1-\varphi_x)|u-u(x)|\big)\Big\|_\ka\\
  &\quad +\ka (x)\big(\SS(h)(\varphi_x|u-u(x)|)\big)(x)\\
  &\leq \frac{\ep}{2}+2\ka (x)\|u\|_\infty\big(\SS(h)\varphi_x\big)(x)< \ep.
 \end{align*}
 This shows that $\|\SS(h)u-u\big\|_\ka <\ep$ for all $h\in [0,h_0)$, and therefore $\SS$ is strongly continuous on $\Lipb$. Since $\Lipb$ is, by definition, dense in $\UC_\ka$, Remark \ref{remDclosed} implies that $\SS$ is strongly continuous.
\end{proof}

 The function $\ph_x$, for $x\in M$, in the previous proposition plays the role of a cut-off function. Proposition \ref{critstrongcont1} is a generalisation of the well-known fact that transition semigroups of L\'evy processes are strongly continuous, where the strong continuity is intimately related to the convergence in law of the process. Note that, for transition semigroups of L\'evy processes, the translation invariance together with the convergence in law ensures that the assumptions of Proposition \ref{critstrongcont1} are satisfied.

We denote by $D_\La\subset \UC_\ka$ the linear space of all $u\in \UC_\ka$ for which there exist $L_u\geq 0$ and $h_u>0$ such that
\[
 \sup_{\la\in \La}\|S_\la (h)u-u\|_\ka \leq L_u h\quad \text{for all }h\in [0,h_u).
\]

\begin{proposition}\label{scontin}
 The semigroup $\SS$ is strongly continuous on $\overline{D_\La}$. In particular, $\SS$ is strongly continuous if $D_\La$ is dense in $\UC_\ka$.
\end{proposition}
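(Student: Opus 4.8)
The plan is to prove strong continuity on $D_\La$ itself; the statement for $\overline{D_\La}$ then follows immediately from Remark~\ref{remDclosed}, and the ``in particular'' clause is just the special case $\overline{D_\La}=\UC_\ka$. So fix $u\in D_\La$; by Lemma~\ref{equivscont} it suffices to show that $\|\SS(h)u-u\|_\ka\to 0$ as $h\searrow 0$. Throughout we may assume $\al\ge 0$.

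First I would record a one-step estimate. For $0\le h<h_u$, writing $(\EE_h u)(x)-u(x)=\sup_{\la\in\La}\big[(S_\la(h)u)(x)-u(x)\big]$ and multiplying by $\ka(x)>0$ yields an upper bound (take the supremum over $\la$) and a lower bound (evaluate at a single $\la$), both controlled by $\sup_{\la\in\La}\|S_\la(h)u-u\|_\ka\le L_u h$; hence $\|\EE_h u-u\|_\ka\le L_u h$. The key step is then a telescoping argument that propagates this bound, up to a factor $e^{\al h}$, to \emph{every} partition $\pi\in P_h$. Writing $\pi=\{t_0,\dots,t_m\}$ with $0=t_0<\dots<t_m=h$ and $h_i:=t_i-t_{i-1}$, so that $\EE_\pi=\EE_{h_1}\cdots\EE_{h_m}$ and $\sum_i h_i=h$, one telescopes
\[
 \EE_\pi u-u=\sum_{i=1}^m\Big(\EE_{h_1}\cdots\EE_{h_i}u-\EE_{h_1}\cdots\EE_{h_{i-1}}u\Big).
\]
Each summand is the image of $\EE_{h_i}u-u$ under $\EE_{h_1}\cdots\EE_{h_{i-1}}$, which by Lemma~\ref{lip12} is Lipschitz with constant $e^{\al(h_1+\cdots+h_{i-1})}\le e^{\al h}$. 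Since $h_i\le h<h_u$, the one-step estimate gives $\|\EE_{h_i}u-u\|_\ka\le L_u h_i$, and therefore
\[
 \|\EE_\pi u-u\|_\ka\le e^{\al h}\sum_{i=1}^m L_u h_i=e^{\al h}L_u h
\]
uniformly over all $\pi\in P_h$.

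Finally I would pass to the supremum defining $\SS(h)u$. For each $x\in M$ we have $(\SS(h)u)(x)-u(x)=\sup_{\pi\in P_h}\big[(\EE_\pi u)(x)-u(x)\big]$, so multiplying by $\ka(x)$ and invoking the uniform bound gives $\ka(x)\big[(\SS(h)u)(x)-u(x)\big]\le e^{\al h}L_u h$; for the reverse direction note that $\SS(h)u\ge\EE_h u$, whence $\ka(x)\big[(\SS(h)u)(x)-u(x)\big]\ge-\|\EE_h u-u\|_\ka\ge -L_u h$. Thus $\|\SS(h)u-u\|_\ka\le e^{\al h}L_u h\to 0$ as $h\searrow 0$, and Lemma~\ref{equivscont} yields strong continuity of $\SS$ at $u$. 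As $u\in D_\La$ was arbitrary, $\SS$ is strongly continuous on $D_\La$, hence on $\overline{D_\La}$.

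The main obstacle is exactly the uniformity over $\pi\in P_h$: since $\SS(h)u$ is a supremum over all finite partitions of $[0,h]$, any estimate on $\|\EE_\pi u-u\|_\ka$ that degraded with the mesh or the number of points would be useless after taking the supremum. The telescoping succeeds precisely because the accumulated Lipschitz constants never exceed $e^{\al h}$ while the per-step errors $L_u h_i$ sum to $L_u h$ regardless of how fine $\pi$ is; the linear-in-$h$ control built into the definition of $D_\La$ is what makes this summation collapse to something that vanishes as $h\searrow 0$.
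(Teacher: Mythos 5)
Your proof is correct and is essentially the paper's own argument: the one-step bound $\|\EE_h u-u\|_\ka\le L_u h$ is the $h_1=0$ case of the paper's estimate \eqref{strongcont:1}, and your telescoping sum over the increments of $\pi$, with each summand controlled by the Lipschitz constant $e^{\al(h_1+\cdots+h_{i-1})}\le e^{\al h}$ from Lemma \ref{lip12}, is exactly the unrolled form of the paper's induction on $\#\pi$, yielding the same uniform bound $L_u e^{\al h}h$ over $P_h$ before passing to the supremum and invoking Lemma \ref{equivscont} and Remark \ref{remDclosed}.
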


\begin{proof}
 Let $u\in D_\La$ and $0\leq h_1<h_2$ with $h_2-h_1<h_u$. Then,
 \[
  \big(S_{\la_0}(h_1)u\big)(x)-\big(\EE_{h_2}u\big)(x)\leq \big(S_{\la_0}(h_1)u\big)(x)-\big(S_{\la_0}(h_2)u\big)(x)
 \]
 for all $x\in M$ and $\la_0\in \La$. Taking the supremum over $\la_0\in \La$, it follows that
 \[
  \big(\EE_{h_1}u\big)(x)-\big(\EE_{h_2}u\big)(x)\leq \sup_{\la\in \La}\big|\big(S_\la(h_1)u\big)(x)-\big(S_\la(h_2)u\big)(x)\big|
 \]
 for all $x\in M$. By a symmetry argument, multiplying by $\kappa(x)$ and taking the supremum over all $x\in M$, we obtain that $\|\EE_{h_1}u-\EE_{h_2}u\|_\ka\leq \sup_{\la\in \La}\|S_\la(h_1)u-S_\la(h_2)u\|_\ka$. Moreover,
 \[
  \|S_\la(h_1)u-S_\la(h_2)u\|_\ka\leq e^{\al h_1}\|S_\la(h_2-h_1)u-u\|_\ka\leq L_ue^{\al h_1} (h_2-h_1).
 \]
 Taking the supremum over all $\la\in \La$, we obtain that
 \begin{equation}\label{strongcont:1}
  \|\EE_{h_1}u -\EE_{h_2}u\|_\ka\leq L_u e^{\al h_1} (h_2-h_1).
 \end{equation}
 Next, we show that
  \begin{equation}\label{strongcont:2}
  \|\EE_\pi u -u\|_\ka\leq L_ue^{\al \max \pi} \max \pi
 \end{equation}
 for all $\pi\in P$ with $\max\pi\in [0,h_u)$ by an induction on $\# \pi\in \N$. First, let $\pi \in P$ with $\#\pi =1$, i.e. $\pi=\{0\}$. Then,
 \[
  \|\EE_\pi u -u\|_\ka=\|\EE_{\{0\}}u-u\|_\ka=0=L_u e^{\al \max \pi} \max \pi.
 \]
 Now, let $m\in \N$, and assume that \eqref{strongcont:2} holds for all $\pi\in P$ with $\max\pi\in [0,h_u)$ and $\#\pi=m$. Let $\pi\in P$ with $\#\pi =m+1$ and $t_m:=\max \pi\in [0,h_u)$. Then, $\pi':=\pi\setminus \{t_m\}\in P$ with $\#\pi'=m$ and $t_{m-1}:=\max \pi'\in [0,t_m)$. Therefore, by induction hypothesis and \eqref{strongcont:1}, it follows that
 \begin{align*}
  \|\EE_\pi u-u\|_\ka &\leq \|\EE_\pi u-\EE_{\pi'}u\|_\ka+\|\EE_{\pi'}u-u\|_\ka\\
  &=\|\EE_{\pi'}\EE_{t_m-t_{m-1}}u-\EE_{\pi'}u\|_\ka+\|\EE_{\pi'}u-u\|_\ka\\
  &\leq e^{\al t_{m-1}}\|\EE_{t_m-t_{m-1}}u-u\|_\ka+\|\EE_{\pi'}u-u\|_\ka\\
  &\leq L_u e^{\al t_{m-1}}(t_m-t_{m-1})+L_ue^{\al t_{m-1}}t_{m-1}\\
  &= L_u e^{\al t_{m-1}}t_m\leq L_ue^{\al \max \pi}\max \pi.
 \end{align*}
 By definition of the semigroup $\SS$, we thus obtain that
 \[
  \|\SS(h)u-u\|_\ka \leq L_u e^{\al h}h \to 0\quad \text{as }h\to 0.
 \]
\end{proof}

The following proposition is somewhat similar to Proposition \ref{critstrongcont1}. Note that (ii) in Proposition \ref{critstrongcont1} is a condition related to the semigroup envelope $\SS$, and its verification is typically nontrivial. The following proposition replaces condition (ii) in Proposition \ref{critstrongcont1} by a smoothness condition on the cut-off functions $(\ph_x)_{x\in M}$, where smoothness is given in terms of the family of generators $(A_\lambda)_{\lambda\in \Lambda}$.
\begin{proposition}\label{critstrongcont2}
 Assume that for every $\de>0$ there exists a family of functions $(\varphi_x)_{x\in M}\subset \UC_\ka$ satisfying the following:
 \begin{enumerate}
  \item[(i)] $0\leq \varphi_x(y)\leq 1$ for all $y\in M$, $\varphi_x(x)=0$, and $\varphi_x(y)=1$ for all $y\in M$ with $d(x,y)\geq \de$,
  \item[(ii')] There exist $L\geq 0$ and $h_0>0$ such that, for all $h\in [0,h_0)$ and $x\in M$,
 \[
  \sup_{\la\in \La}\|S_\la (h)\varphi_x-\varphi_x\|_\ka \leq L h.
 \]
 \end{enumerate}
 Then, $\SS$ is strongly continuous.
\end{proposition}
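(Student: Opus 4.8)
The plan is to reduce this proposition to Proposition \ref{critstrongcont1} by checking that hypotheses (i) and (ii') here imply condition (ii) there. Since the cut-off condition (i) is verbatim the same in both propositions, it suffices to establish
\[
 \sup_{x\in M} \ka(x)\big[\big(\SS(h)\varphi_x\big)(x)\big] \to 0\quad\text{as }h\searrow 0.
\]
The key observation is that (ii') says precisely that, for the given $\de>0$, every cut-off function $\varphi_x$ belongs to $D_\La$ with constants $L_{\varphi_x}=L$ and $h_{\varphi_x}=h_0$ that are \emph{uniform} in $x\in M$. This is exactly the setting in which the quantitative estimate behind Proposition \ref{scontin} applies.

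First I would invoke the bound produced in the proof of Proposition \ref{scontin}. There it was shown by induction on $\#\pi$ that, for any $u\in D_\La$, one has $\|\EE_\pi u-u\|_\ka\leq L_u e^{\al\max\pi}\max\pi$ for every $\pi\in P$ with $\max\pi\in[0,h_u)$, and hence, passing to the pointwise supremum over $P_h$, that $\|\SS(h)u-u\|_\ka\leq L_u e^{\al h}h$ for $h\in[0,h_u)$. Because the constants in (ii') are independent of $x$, this argument runs simultaneously for every $\varphi_x$ and yields the uniform estimate
\[
 \|\SS(h)\varphi_x-\varphi_x\|_\ka\leq L\, e^{\al h}h\quad\text{for all }x\in M\text{ and }h\in[0,h_0).
\]

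Next I would exploit the normalisation $\varphi_x(x)=0$. Since $\varphi_x\geq 0$ and $\SS(h)0=0$, monotonicity of $\SS(h)$ gives $\SS(h)\varphi_x\geq 0$, so the quantity $\ka(x)\big(\SS(h)\varphi_x\big)(x)$ is nonnegative. Using $\varphi_x(x)=0$ it equals $\ka(x)\big|\big(\SS(h)\varphi_x\big)(x)-\varphi_x(x)\big|$, which is bounded above by $\|\SS(h)\varphi_x-\varphi_x\|_\ka$. Combining with the previous display gives
\[
 0\leq \sup_{x\in M}\ka(x)\big[\big(\SS(h)\varphi_x\big)(x)\big]\leq L\, e^{\al h}h \to 0\quad\text{as }h\searrow 0,
\]
which is condition (ii) of Proposition \ref{critstrongcont1}. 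Since condition (i) is already assumed, Proposition \ref{critstrongcont1} then applies and shows that $\SS$ is strongly continuous.

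The proof is essentially a reduction, so I do not expect a serious obstacle; the two points requiring care are purely bookkeeping. The first is to confirm that the induction of Proposition \ref{scontin} carries the constants $L,h_0$ through uniformly in $x$, which is automatic precisely because (ii') makes them independent of $x$. The second is the compatibility of the pointwise supremum defining $\SS(h)\varphi_x$ with the weighted bound: one verifies $\ka(x)\big[\big(\EE_\pi\varphi_x\big)(x)-\varphi_x(x)\big]\leq\|\EE_\pi\varphi_x-\varphi_x\|_\ka\leq L e^{\al h}h$ for each $\pi\in P_h$ and then takes the supremum over $\pi$, so that the inequality survives after multiplication by $\ka(x)$.
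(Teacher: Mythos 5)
Your proof is correct and follows essentially the same route as the paper: both arguments reduce the statement to Proposition \ref{critstrongcont1} by verifying its condition (ii), exploiting that (ii') places every $\varphi_x$ in $D_\La$ with constants $L$ and $h_0$ uniform in $x$, so that the quantitative bounds from the proof of Proposition \ref{scontin} apply uniformly. The only cosmetic difference is that you cite the endpoint estimate $\|\SS(h)\varphi_x-\varphi_x\|_\ka\le L e^{\al h}h$ of that proof directly, whereas the paper reruns the induction pointwise at $x$ using the intermediate estimate \eqref{strongcont:1}; both are valid.
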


\begin{proof}
 By assumption, the family $(\varphi_x)_{x\in M}$ satisfies condition (i) from Proposition \ref{critstrongcont1}. We now verify that (ii') implies condition (ii) from Proposition \ref{critstrongcont1}. Observe that
 \[
  \big(\EE_h \varphi_x\big)(y)\leq \varphi_x(y)+ \big| \big(\EE_h \varphi_x\big)(y)-\varphi_x(y)\big|
 \]
 for all $h\in [0,h_0)$ and $x,y\in M$. W.l.o.g. we assume that $\al\geq 0$ in (A2). Then, by \eqref{strongcont:1}, we obtain that
 \begin{align*}
  \big( \EE_\pi\EE_h \varphi_x\big)(x)&\leq \big(\EE_\pi\varphi_x\big)(x)+\big(\EE_\pi \big| \EE_h \varphi_x-\varphi_x\big|\big)(x)\\
  &\leq \big(\EE_\pi\varphi_x\big)(x)+\frac{e^{\al h_0}}{\ka(x)}\| \EE_h \varphi_x-\varphi_x\|_\ka\\
  &\leq \big(\EE_\pi\varphi_x\big)(x)+\frac{Le^{\al h_0}h}{\ka(x)}
 \end{align*}
 for all $\pi\in P$ with $\max\pi\in [0,h_0)$ and $h\in [0,h_0)$. Inductively, it follows that
 \[
   \big(\EE_\pi \varphi_x\big)(x)\leq \varphi_x(x)+\frac{Le^{\al h_0}\max \pi}{\ka(x)}=\frac{Le^{\al h_0}\max \pi}{\ka(x)}
 \]
 for all $\pi \in P$ with $\max\pi\in [0,h_0)$. Taking the supremum over all $\pi \in P_h$ for $h\in [0,h_0)$ yields that
 \[
  \sup_{x\in M}\ka(x)\big[\big(\SS(h)\varphi_x\big)(x)\big]\leq Le^{\al h_0}h\to 0\quad\text{as }h\searrow0.
 \]
 Therefore, condition (ii) from Proposition \ref{critstrongcont1} is satisfied and the strong continuity of $\SS$ follows.
\end{proof}

\section{Related HJB equation and viscosity solutions}\label{sec:viscosity}

Let $\la\in \La$. Then, we denote by $D_\la\subset \UC_\ka$ the space of all $u\in \UC_\ka$ such that the map $[0,\infty)\to \UC_\ka,\; t\mapsto S_\la(t)u$ is continuous. Further, let $D(A_\la)$ denote the space of all $u\in \UC_\ka$ for which
\[
 A_\la u:=\lim_{h\searrow 0}\frac{S_\la(h)u-u}{h}\in \UC_\ka
\]
exists w.r.t.\ $\|\cdot\|_\kappa$. Note that, by definition, $D(A_\la)\subset D_\la$. Let $u\in \bigcap_{\la\in \La}D(A_\la)$ with $C_u:=\sup_{\la\in \La}\|A_\la u\|_\ka<\infty$. Then, it follows that (see e.g. \cite[Lemma II.1.3]{MR1721989})
 \[
  \|S_\la(h)u-u\|_\ka\leq \int_0^h\|S_\la(s) A_\la u\|_\ka \, {\rm d}s\leq C_ue^{\al h}h\quad \text{for all }\la\in \La.
 \]
 This shows that $u\in D_\La$. Moreover, since $\sup_{\la\in \La} \|A_\la u\|_\ka<\infty$, it follows that
 \[
  \big(\AA u\big)(x):=\sup_{\la \in \La} \big(A_\la u\big)(x)
 \]
 is well-defined for all $x\in M$.

\begin{lemma}\label{generatorprep1}
 Let $u\in \bigcap_{\la\in \La}D(A_\la)$ with
 \[
  \sup_{\la\in \La}\|A_\la u\|_\ka<\infty \quad \text{and}\quad \sup_{\la\in \La}\|S_\la(h)A_\la u-A_\la u\|_\ka\to 0\quad \text{as }h\to 0.
 \]
 Then, $\lim_{h\searrow 0}\big\|\frac{\EE_h u-u}{h}- \AA u\big\|_\ka=0$. In particular, $\AA u\in \UC_\ka$.
\end{lemma}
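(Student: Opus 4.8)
The plan is to reduce the assertion to a uniform-in-$\la$ estimate on the individual difference quotients $\tfrac{S_\la(h)u-u}{h}$, and then let the envelope inherit that estimate from the elementary fact that taking pointwise suprema is nonexpansive. The starting point is that, since $u\in\bigcap_{\la\in\La}D(A_\la)$, each orbit $s\mapsto S_\la(s)u$ is differentiable with derivative $S_\la(s)A_\la u$, so the integral representation $S_\la(h)u-u=\int_0^h S_\la(s)A_\la u\,{\rm d}s$ holds — this is exactly the identity already invoked just before the statement, via \cite[Lemma II.1.3]{MR1721989}. Writing also $A_\la u=\tfrac1h\int_0^h A_\la u\,{\rm d}s$, I would obtain the key identity
\[
 \frac{S_\la(h)u-u}{h}-A_\la u=\frac1h\int_0^h\big(S_\la(s)A_\la u-A_\la u\big)\,{\rm d}s.
\]

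Taking $\|\cdot\|_\ka$-norms and pulling them inside the integral gives, uniformly in $\la$,
\[
 \Big\|\frac{S_\la(h)u-u}{h}-A_\la u\Big\|_\ka\leq \frac1h\int_0^h\|S_\la(s)A_\la u-A_\la u\|_\ka\,{\rm d}s\leq \eta(h),
\]
where $\eta(h):=\sup_{0\leq s\leq h}\sup_{\la\in\La}\|S_\la(s)A_\la u-A_\la u\|_\ka$. The second hypothesis states precisely that $\sup_{\la\in\La}\|S_\la(h)A_\la u-A_\la u\|_\ka\to0$ as $h\to0$; since this quantity vanishes at $h=0$, its running supremum $\eta(h)$ also tends to $0$ as $h\searrow0$.

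It remains to transfer this from each $S_\la$ to the envelope $\EE_h$. Because $\EE_h u=\sup_{\la\in\La}S_\la(h)u$ and $\AA u=\sup_{\la\in\La}A_\la u$ pointwise, one has $\tfrac{\EE_h u-u}{h}=\sup_{\la}\tfrac{S_\la(h)u-u}{h}$, and the nonexpansiveness of the supremum, $|\sup_\la a_\la-\sup_\la b_\la|\leq\sup_\la|a_\la-b_\la|$, yields pointwise
\[
 \Big|\Big(\tfrac{\EE_h u-u}{h}\Big)(x)-(\AA u)(x)\Big|\leq \sup_{\la\in\La}\Big|\Big(\tfrac{S_\la(h)u-u}{h}\Big)(x)-(A_\la u)(x)\Big|.
\]
Multiplying by $\ka(x)>0$, interchanging $\ka(x)$ with the supremum over $\la$, and taking the supremum over $x\in M$ gives $\big\|\tfrac{\EE_h u-u}{h}-\AA u\big\|_\ka\leq\eta(h)\to0$, which is the claim. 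Since $\EE_h u\in\UC_\ka$ by Lemma \ref{lip121} and $u\in\UC_\ka$, each difference quotient lies in $\UC_\ka$; as $\UC_\ka$ is $\|\cdot\|_\ka$-complete, the limit $\AA u$ belongs to $\UC_\ka$.

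The proof is largely mechanical, and the genuine content is the nonexpansiveness of the supremum in the last step: this is precisely what lets the envelope's difference quotient inherit, uniformly, the convergence of its constituents. The only point requiring care is the justification of the integral representation and the estimate $\|\int\|_\ka\le\int\|\cdot\|_\ka$, which needs $s\mapsto S_\la(s)A_\la u$ to be $\UC_\ka$-continuous; this is ensured by the hypothesis $\|S_\la(s)A_\la u-A_\la u\|_\ka\to0$, i.e.\ $A_\la u\in D_\la$, together with the semigroup property.
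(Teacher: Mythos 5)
Your proof is correct and follows essentially the same route as the paper's: both rest on the nonexpansiveness of the pointwise supremum, which reduces the claim to the uniform estimate $\sup_{\la\in\La}\big\|\tfrac{S_\la(h)u-u}{h}-A_\la u\big\|_\ka\leq \sup_{\la\in\La}\tfrac1h\int_0^h\|S_\la(s)A_\la u-A_\la u\|_\ka\,{\rm d}s$, and then invoke the hypothesis. The only differences are cosmetic (you apply the two steps in the opposite order and spell out the completeness argument for $\AA u\in\UC_\ka$ and the justification of the integral representation, which the paper leaves implicit).
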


\begin{proof}
 Let $\ep>0$. Then, by assumption, there exists some $h_0>0$ such that
 \[
  \sup_{\la\in \La} \|S_\la(s) A_\la u-A_\la u\|_\ka\leq \ep\quad\text{for all }s\in [0,h_0].
 \]
 Hence, for all $h\in (0,h_0]$, it follows that
 \begin{align*}
  \bigg\|\frac{\EE_hu-u}{h}- \AA u\bigg\|_\ka&\leq \sup_{\la \in \La}\bigg\|\frac{S_\la(h)u-u}{h}- A_\la u\bigg\|_\ka\\
  &=\sup_{\la\in \La}\frac{1}{h} \bigg\|\int_0^hS_\la(s)A_\la u-A_\la u\, {\rm d}s\bigg\|_\ka\\
  &\leq \sup_{\la\in \La}\frac{1}{h} \int_0^h\|S_\la(s)A_\la u-A_\la u\|_\ka\, {\rm d}s\leq \ep.
 \end{align*}
\end{proof}

\begin{proposition}\label{generator1}
  Let $u\in \bigcap_{\la\in \La}D(A_\la)$ with
 \[
  \sup_{\la\in \La}\|A_\la u\|_\ka<\infty \quad \text{and}\quad \sup_{\la\in \La}\|S_\la(h)A_\la u-A_\la u\|_\ka\to 0\quad \text{as }h\to 0.
 \]
 Then, $\AA u\in \UC_\ka$ and the following statements are equivalent:
 \begin{enumerate}
  \item[(i)] The map $[0,\infty)\to \UC_\ka,\; t\mapsto \SS(t)\AA u$ is continuous,
  \item[(ii)] $\lim_{h\searrow 0}\big\|\tfrac{\SS(h)u-u}{h}- \AA u\big\|_\ka=0$, i.e. $\AA u=\lim_{h\searrow 0} \frac{\SS(h)u-u}{h}$, where the limit is w.r.t. $\|\cdot\|_\kappa$.
 \end{enumerate}
\end{proposition}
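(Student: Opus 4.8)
The plan is to build everything on Lemma~\ref{generatorprep1}, which is the workhorse of the argument: it already yields $\AA u\in\UC_\ka$ together with the one-step estimate $\lim_{h\searrow0}\big\|\tfrac{\EE_h u-u}{h}-\AA u\big\|_\ka=0$. Before splitting into the two implications I would record one structural observation, namely that the hypotheses are symmetric under $u\mapsto-u$: since each $A_\la$ is linear, $A_\la(-u)=-A_\la u$, so $-u$ satisfies exactly the same assumptions as $u$. Consequently both $u$ and $-u$ lie in $D_\La$, and Proposition~\ref{scontin} shows that both are strong-continuity points of $\SS$, that is $\SS(h)u\to u$ and $\SS(h)(-u)\to-u$ in $\UC_\ka$ as $h\searrow0$. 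This symmetry is precisely what makes the sublinearity estimates in the second implication close up.

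For (i)$\Rightarrow$(ii) the lower bound is immediate: since $\{0,h\}\in P_h$ we have $\EE_h u\le\SS(h)u$, whence $\tfrac{\SS(h)u-u}{h}\ge\tfrac{\EE_h u-u}{h}\to\AA u$. The substance is the matching upper bound. Because refining a partition only increases $\EE_\pi u$ (by \eqref{monotone}), $\SS(h)u$ equals the supremum of $\EE_\pi u$ over partitions $\pi=\{t_0,\dots,t_m\}\in P_h$ of mesh smaller than any prescribed $h_0$. For such a $\pi$ I would telescope $\EE_\pi u-u=\sum_{i=1}^m(\Phi_i\EE_{h_i}u-\Phi_i u)$, where $h_i=t_i-t_{i-1}$ and $\Phi_i=\EE_{h_1}\cdots\EE_{h_{i-1}}$, and use subadditivity of $\Phi_i$ to get $\Phi_i\EE_{h_i}u-\Phi_i u\le\Phi_i(\EE_{h_i}u-u)$. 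Writing $\EE_{h_i}u-u=h_i\AA u+r_i$ with $\|r_i\|_\ka\le h_i\ep(h_0)$ (Lemma~\ref{generatorprep1}) and invoking positive homogeneity together with $\Phi_i(\AA u)\le\SS(t_{i-1})\AA u$ gives $\EE_\pi u-u\le\sum_i h_i\,\SS(t_{i-1})\AA u+R_\pi$ with a remainder of $\ka$-norm at most $e^{\al h}h\,\ep(h_0)$. The first sum is a Riemann sum for $\int_0^h\SS(s)\AA u\,{\rm d}s$, which converges in $\UC_\ka$ exactly because of hypothesis (i); taking the supremum over the admissible $\pi$, dividing by $h$ and letting $h\searrow0$ (using $\tfrac1h\int_0^h\SS(s)\AA u\,{\rm d}s\to\AA u$) then sandwiches $\tfrac{\SS(h)u-u}{h}$ against $\AA u$. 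I expect the uniform control of the remainder across all fine partitions to be the main obstacle, and the reduction to partitions of small mesh together with Lemma~\ref{generatorprep1} is what resolves it.

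For (ii)$\Rightarrow$(i), by Lemma~\ref{equivscont} it suffices to prove $\SS(h)\AA u\to\AA u$ as $h\searrow0$. I would approximate $\AA u$ by $w_\de:=\tfrac{\SS(\de)u-u}{\de}$, which converges to $\AA u$ in $\UC_\ka$ by (ii); using the Lipschitz continuity of $\SS(h)$ (Lemma~\ref{lem:observ}) the problem reduces to showing that each $w_\de$ is itself a strong-continuity point of $\SS$. By positive homogeneity $\SS(h)w_\de=\tfrac1\de\SS(h)(\SS(\de)u-u)$, and sublinearity of $\SS(h)$ applied twice gives the sandwich $\SS(h+\de)u-\SS(h)u\le\SS(h)(\SS(\de)u-u)\le\SS(h+\de)u+\SS(h)(-u)$, where $\SS(h+\de)u=\SS(h)\SS(\de)u$ by the semigroup property \eqref{dpp1}. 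Subtracting $\SS(\de)u-u$ and letting $h\searrow0$, both bounds tend to $0$ in $\UC_\ka$: the lower one because $\SS(h+\de)u\to\SS(\de)u$ and $\SS(h)u\to u$, the upper one because $\SS(h+\de)u\to\SS(\de)u$ and $\SS(h)(-u)\to-u$, all four limits being available thanks to $u,-u\in D$. Hence $\SS(h)w_\de\to w_\de$, so $w_\de$ is a strong-continuity point, and letting $\de\searrow0$ in the three-term estimate $\|\SS(h)\AA u-\AA u\|_\ka\le e^{\al h}\|\AA u-w_\de\|_\ka+\|\SS(h)w_\de-w_\de\|_\ka+\|w_\de-\AA u\|_\ka$ finishes the proof.
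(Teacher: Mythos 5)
Your proof is correct, and it shares the paper's overall architecture---Lemma \ref{generatorprep1} as the engine, the sandwich between $\EE_h u\leq \SS(h)u$ from below and $\int_0^h\SS(s)\AA u\,{\rm d}s$ from above for (i)$\Rightarrow$(ii), and closedness of the set of strong-continuity points for (ii)$\Rightarrow$(i)---but both implications are executed differently, and the comparison is instructive. For (i)$\Rightarrow$(ii), the paper proves the exact inequality $\EE_\pi u-u\leq\int_0^{\max\pi}\SS(s)\AA u\,{\rm d}s$ for \emph{every} partition by induction on $\#\pi$, using the identity $\EE_h u-u=\sup_{\la\in\La}\int_0^h S_\la(s)A_\la u\,{\rm d}s$ together with Jensen's inequality for the sublinear operator applied to a Bochner integral and the semigroup property; your telescoping decomposition with the uniform error bound from Lemma \ref{generatorprep1} and a Riemann-sum comparison (valid uniformly over partitions of small mesh by uniform continuity of $s\mapsto\SS(s)\AA u$ on compacts, which is exactly where (i) enters) reaches the same inequality in the limit, avoiding the Jensen step---which the paper invokes without proof---at the price of restricting to fine partitions, a restriction legitimized by \eqref{monotone}. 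For (ii)$\Rightarrow$(i), the paper is very terse: it records $u\in D$ and $\SS(h)u\in D$ and then appeals to Remark \ref{remDclosed}; but for a \emph{sublinear} semigroup the set $D$ of strong-continuity points is not obviously stable under taking differences, so the implicit step ``$\tfrac{\SS(h)u-u}{h}\in D$'' needs precisely the sandwich you write down, $\SS(h+\de)u-\SS(h)u\leq\SS(h)\big(\SS(\de)u-u\big)\leq\SS(h+\de)u+\SS(h)(-u)$, and therefore needs $-u\in D$ as well---a point the paper never mentions. Your observation that the hypotheses are symmetric under $u\mapsto -u$ (so that $-u\in D_\La$ and hence $-u\in D$ by Proposition \ref{scontin}), followed by the explicit verification that each difference quotient $w_\de$ is a strong-continuity point and the three-term closedness estimate, makes this step fully rigorous; in that respect your write-up is more complete than the paper's own proof.
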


\begin{proof}
 By Lemma \ref{generatorprep1}, we already know that $\AA u\in \UC_\ka$. Let $D$ denote the set of all $v\in \UC_\ka$, for which the map $[0,\infty)\to \UC_\ka,\; t\mapsto \SS(t)v$ is continuous. Our assumptions imply that $u\in D_\La$. Therefore, by Proposition \ref{scontin}, $u\in D$ and, by Remark \ref{reminvariant}, $\SS(h)u\in D$ for all $h\geq 0$. Hence, by Remark \ref{remDclosed}, statement (ii) implies (i). By Lemma \ref{generatorprep1},
 \[
  \AA u-\frac{\SS(h)u-u}{h}\leq \AA u -\frac{\EE_hu-u}{h}\to 0,\quad \text{as }h\searrow 0.
 \]
 Assuming that the map $[0,\infty)\to \UC_\ka,\; t\mapsto \SS(t)\AA u$ is continuous, it follows that
 \[
 \bigg\|\frac{1}{h}\int_0^h \SS(s)\AA u \, {\rm d}s-\AA u\bigg\|_\ka\to 0,\quad \text{as }h\searrow 0.
 \]
 Hence, it is sufficient to show that
 \begin{equation}\label{eq:intsemi1}
  \SS(t)u-u\leq \int_0^t \SS(s)\AA u \, {\rm d}s\quad\text{for all }t\geq 0.
 \end{equation}
 Let $t\geq 0$ and $h>0$. Then,
 \begin{equation}\label{intsemieq}
  \EE_h u-u=\sup_{\la \in \La} \int_0^h S_\la(s)A_\la u\, {\rm d} s \leq \int_0^h\SS(s)\AA u\,{\rm d} s=\int_t^{t+h}\SS(s-t)\AA u\, {\rm d}s.
 \end{equation}
 Next, we prove that
 \[
  \EE_\pi u-u\leq \int_0^{\max \pi} \SS(s)\AA u \, {\rm d}s \quad\text{for all }\pi\in P
 \]
  by an induction on $m=\# \pi$. If $m=1$, i.e.~if $\pi=\{0\}$, the statement is trivial. Hence, assume that
 \[
  \EE_{\pi'} u-u\leq \int_0^{\max \pi'} \SS(s)\AA u \, {\rm d}s
 \]
 for all $\pi'\in P$ with $\#\pi'=m$ for some $m\in \N$. Let $\pi=\{t_0,t_1,\ldots, t_m\}\in P$ with $0=t_0< t_1< \ldots < t_m$ and $\pi':=\pi\sm \{t_m\}$. Then, it follows from \eqref{intsemieq} that
 \begin{align*}
  \EE_\pi u -\EE_{\pi'}u&\leq \SS(t_{m-1})\big(\EE_{t_m-t_{m-1}}u-u\big)\leq \SS(t_{m-1})\bigg(\int_{t_{m-1}}^{t_m} \SS(s-t_{m-1})\AA u\, {\rm d}s\bigg)\\
  &\leq\int_{t_{m-1}}^{t_m}\SS(s)\AA u\, {\rm d}s,
 \end{align*}
 where the last inequality follows from Jensen's inequality. By induction hypothesis, we thus obtain that
 \begin{align*}
 \EE_\pi u -u&= \big(\EE_\pi u -\EE_{\pi'}u\big)+\big(\EE_{\pi'} u-u\big)\leq \int_{t_{m-1}}^{t_m}\SS(s)\AA u\, {\rm d}s+\int_{0}^{t_{m-1}}\SS(s)\AA u\, {\rm d}s\\
 &=\int_0^{\max \pi}\SS(s)\AA u\, {\rm d}s.
 \end{align*}
 In particular, $\EE_\pi u-u\leq \int_0^t\SS(s)\AA u\, {\rm d}s$ for every $\pi\in P_t$. Taking the supremum over all $\pi\in P_t$ yields the assertion.
\end{proof}

We now introduce the class of test functions, which will be used for the definition of a viscosity solution. Let $$\DD:=\bigg\{u\in \bigcap_{\la\in \La}D(A_\la)\, \bigg|\, \sup_{\la\in \La}\|A_\la u\|_\ka<\infty \text{ and } \lim_{h\searrow 0}\bigg\|\frac{\SS(h)u-u}{h}-\AA u\bigg\|_\ka= 0 \bigg\}.$$
In the sequel, we are interested in viscosity solutions to the differential equation
 \begin{equation}\label{eq:PDE}
  u'(t)= \AA u(t), \quad \text{for }t> 0,
 \end{equation}
where we use the following notion of a viscosity solution.

\begin{definition}\label{def.viscosity}
 We say that $u\colon [0,\infty)\to \UC_\ka$ is a \textit{viscosity subsolution} to \eqref{eq:PDE} if $u$ is continuous, and for every $t>0$, $x\in M$, and every differentiable function $\psi\colon (0,\infty)\to \UC_\ka$ with $\psi(t)\in \DD$, $\big(\psi(t)\big)(x)=\big(u(t)\big)(x)$ and $\psi(s)\geq u(s)$ for all $s>0$,
 \[
  \big(\psi'(t)\big)(x)\leq \big(\AA \psi(t)\big)(x).
 \]
 Analogously, $u$ is called a \textit{viscosity supersolution} to \eqref{eq:PDE} if $u\colon [0,\infty)\to \UC_\ka$ is continuous, and for every $t>0$, $x\in M$, and every differentiable function $\psi\colon (0,\infty)\to \UC_\ka$ with $\psi(t)\in \DD$, $\big(\psi(t)\big)(x)=\big(u(t)\big)(x)$ and $\psi(s)\leq u(s)$ for all $s>0$,
 \[
  \big(\psi'(t)\big)(x)\geq \big(\AA \psi(t)\big)(x).
 \]
 We say that $u$ is a \textit{viscosity solution} to \eqref{eq:PDE} if $u$ is a viscosity subsolution and a viscosity supersolution.
\end{definition}

\begin{remark}\label{rem.uniqueness}
 In general it is not clear how rich the class of test functions for a viscosity solution from the previous definition is. However, in the examples in Section \ref{sec:ex}, we will see that, in most cases, where $M$ is a Banach space, $\Lipb^k\subset \DD$ with $k\in \{0,1,2\}$, where $\Lipb^k$ denotes the set of all $k$-times (Fr\'echet) differentiable functions $M\to \R$ with bounded and Lipschitz continuous derivatives. For a function $\psi\colon (0,\infty)\times M\to \R$, which is differentiable w.r.t.~$t$ and $\partial_t\psi\colon (0,\infty)\times M\to \R$ uniformly w.r.t.~$x$ Lipschitz continuous in $t$ with Lipschitz constant $L\geq 0$, it follows that
\[
 \sup_{x\in M}\bigg|\frac{\psi(t+h,x)-\psi(t,x)}{h}-\partial_t\psi(t,x)\bigg| \leq Lh \to 0 \quad \text{as }h\searrow 0
\]
for all $t>0$. Hence, if $\Lipb^k\subset \DD$ for some $k\in \N_0$, every $\psi\in \Lipb^{1,k}\big((0,\infty)\times M\big)$ is differentiable as a map $(0,\infty)\to \UC_\ka$ and satisfies $\psi(t)\in \DD$ for all $t>0$. In most applications, the class $\Lipb^{1,k}\big((0,\infty)\times M\big)$ of test functions is sufficiently large in order to obtain uniqueness of a viscosity solution. For more details concerning our notion of a viscosity solution and the uniqueness of solutions, we refer to Section \ref{sec:uniqueness}. 
\end{remark}

We conclude this section with the following main theorem.

\begin{theorem}\label{viscosity1}
 Assume that the semigroup $\SS$ is strongly continuous. Then, for every $u_0\in \UC_\ka$, the function $u\colon [0,\infty)\to \UC_\ka, \;t\mapsto \SS(t)u_0$
 is a viscosity solution to the abstract initial value problem
\begin{eqnarray*}
  u'(t)&=&\AA u(t), \quad \text{for }t> 0,\\
  u(0)&=&u_0.
 \end{eqnarray*}
\end{theorem}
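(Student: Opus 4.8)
The plan is to verify that $u(t)=\SS(t)u_0$ satisfies the three requirements of Definition \ref{def.viscosity}: continuity, the subsolution inequality, and the supersolution inequality; the initial condition $u(0)=\SS(0)u_0=u_0$ is immediate, and continuity of $u$ is exactly the assumed strong continuity of $\SS$. The sub- and supersolution arguments are parallel and both rest on four structural facts already available: the semigroup property $\SS(t)=\SS(h)\SS(t-h)$ (equation \eqref{dpp1}), the monotonicity, subadditivity and positive homogeneity of each $\SS(h)$ (Lemma \ref{lem:observ}), the generator relation $\frac{\SS(h)\psi(t)-\psi(t)}{h}\to\AA\psi(t)$ in $\|\cdot\|_\ka$, valid because $\psi(t)\in\DD$, and strong continuity $\SS(h)v\to v$ applied to $v=\pm\psi'(t)$.

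For the subsolution property, fix $t>0$, $x\in M$ and an admissible test function $\psi$ with $\psi\geq u$ and $(\psi(t))(x)=(u(t))(x)$. For $0<h<t$ I would start from $u(t)=\SS(h)u(t-h)$; since $u(t-h)\leq\psi(t-h)$ and $\SS(h)$ is monotone, evaluating at $x$ and using $(u(t))(x)=(\psi(t))(x)$ yields $(\psi(t))(x)\leq(\SS(h)\psi(t-h))(x)$. Writing $\psi(t-h)=\psi(t)+w_h$ with $w_h=-h\psi'(t)+r_h$ and $\|r_h\|_\ka=o(h)$ (differentiability of $\psi$), subadditivity gives $(\SS(h)\psi(t-h))(x)\leq(\SS(h)\psi(t))(x)+(\SS(h)w_h)(x)$. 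I then evaluate the two summands: the first contributes $h(\AA\psi(t))(x)+o(h)$ because $\psi(t)\in\DD$, and for the second I use $(\SS(h)w_h)(x)\leq h(\SS(h)(-\psi'(t)))(x)+(\SS(h)r_h)(x)$, where positive homogeneity extracts the factor $h$, strong continuity gives $(\SS(h)(-\psi'(t)))(x)\to-(\psi'(t))(x)$, and the Lipschitz estimate of Lemma \ref{lem:observ} turns $\|r_h\|_\ka=o(h)$ into $(\SS(h)r_h)(x)=o(h)$; altogether $(\SS(h)w_h)(x)\leq-h(\psi'(t))(x)+o(h)$. Combining, $0\leq h\big[(\AA\psi(t))(x)-(\psi'(t))(x)\big]+o(h)$, and dividing by $h$ and letting $h\searrow0$ gives $(\psi'(t))(x)\leq(\AA\psi(t))(x)$.

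The supersolution property is obtained the same way with the inequalities reversed. Now $\psi\leq u$, so $u(t)=\SS(h)u(t-h)\geq\SS(h)\psi(t-h)$ and hence $(\SS(h)\psi(t-h))(x)\leq(\psi(t))(x)$. To bound $\SS(h)\psi(t-h)$ from below I would use subadditivity in the form $\SS(h)\psi(t)\leq\SS(h)\psi(t-h)+\SS(h)(-w_h)$, giving $(\SS(h)\psi(t))(x)-(\psi(t))(x)\leq(\SS(h)(-w_h))(x)$; since $-w_h=h\psi'(t)-r_h$, the same homogeneity-plus-strong-continuity computation yields $(\SS(h)(-w_h))(x)\leq h(\psi'(t))(x)+o(h)$, while the left-hand side equals $h(\AA\psi(t))(x)+o(h)$. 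Dividing by $h$ and letting $h\searrow0$ gives $(\AA\psi(t))(x)\leq(\psi'(t))(x)$, as required.

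I expect the one genuinely delicate point to be the treatment of the cross term $\SS(h)\big(\psi(t-h)-\psi(t)\big)$: a direct Lipschitz bound only produces an $O(h)$ estimate, which after division by $h$ does not vanish, so it is essential to isolate the first-order part $-h\psi'(t)$, pull out the scalar $h$ by positive homogeneity, and invoke strong continuity of $\SS$ to identify its limit as exactly $-(\psi'(t))(x)$; the genuinely $o(h)$ remainder $r_h$ is then harmless. This is precisely where the strong-continuity hypothesis of the theorem enters, while everything else is bookkeeping with the semigroup property, monotonicity and sublinearity established earlier.
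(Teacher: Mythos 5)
Your proposal is correct and follows essentially the same route as the paper's proof: semigroup property \eqref{dpp1}, monotonicity to pass from $u(t-h)$ to $\psi(t-h)$, sublinearity to split off $\SS(h)\psi(t)$, the defining property of $\DD$ for that term, and Lipschitz continuity plus strong continuity of $\SS$ for the cross term involving $\psi'(t)$. The only cosmetic difference is that you expand $\psi(t-h)=\psi(t)-h\psi'(t)+r_h$ and split off the remainder by one extra application of subadditivity, whereas the paper keeps the difference quotient $\frac{\psi(t-h)-\psi(t)}{h}$ intact inside $\SS(h)$ and identifies its limit directly.
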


\begin{proof}
 Fix $t>0$ and $x\in M$. We first show that $u$ is a viscosity subsolution. Let $\psi\colon (0,\infty)\to \UC_\ka$ differentiable with $\psi(t)\in \DD$, $\big(\psi(t)\big)(x)=\big(u(t)\big)(x)$ and $\psi(s)\geq u(s)$ for all $s>0$. Then, for every $h\in (0,t)$, it follows from Equation \eqref{dpp1} that
 \begin{align*}
  0&=\frac{\SS(h)\SS(t-h)u_0-\SS(t)u_0}{h}=\frac{\SS(h)u(t-h)-u(t)}{h}\\
  &\leq \frac{\SS(h)\psi(t-h)-u(t)}{h} \leq \frac{\SS(h)\big(\psi(t-h)-\psi(t)\big)+\SS(h)\psi(t)-u(t)}{h}\\
  &= \SS(h)\bigg(\frac{\psi(t-h)-\psi(t)}{h}\bigg)+\frac{\SS(h)\psi(t)-\psi(t)}{h}+\frac{\psi(t)-u(t)}{h}.
 \end{align*}
 Moreover,
 \begin{align*}
  &\bigg\|\SS(h)\bigg(\frac{\psi(t-h)-\psi(t)}{h}\bigg)+\psi'(t)\bigg\|_\ka\to 0\quad \text{and}\\
  &\bigg\|\frac{\SS(h)\psi(t)-\psi(t)}{h}-\AA \psi(t)\bigg\|_\ka\to 0.
 \end{align*}
 as $h\searrow 0$. Since $\big(u(t)\big)(x)=\big(\psi(t)\big)(x)$, it follows that
 \[
  0\leq -\big(\psi'(t)\big)(x)+\big(\AA\psi(t)\big)(x).
 \]
 In order to show that $u$ is a viscosity supersolution, let $\psi\colon (0,\infty)\to \UC_\ka$ differentiable with $\psi(t)\in \DD$, $\big(\psi(t)\big)(x)=\big(u(t)\big)(x)$ and $\psi(s)\leq u(s)$ for all $s>0$. By Equation \eqref{dpp1}, for all $h>0$ with $0<h<t$, we obtain that
 \begin{align*}
  0&=\frac{\SS(t)u_0-\SS(h)\SS(t-h)u_0}{h}\\
  &=\frac{u(t)-\SS(h)u(t-h)}{h}\leq \frac{u(t)-\SS(h)\psi(t-h)}{h}\\
  &= \frac{u(t)-\psi(t)}{h}+\frac{\psi(t)-\SS(h)\psi(t)}{h}+\frac{\SS(h)\psi(t)-\SS(h)\psi(t-h)}{h}\\
  &\leq \frac{u(t)-\psi(t)}{h}+\frac{\psi(t)-\SS(h)\psi(t)}{h}+\SS(h)\bigg(\frac{\psi(t)-\psi(t-h)}{h}\bigg).
 \end{align*}
 Furthermore,
 \begin{align*}
  &\bigg\|\SS(h)\bigg(\frac{\psi(t)-\psi(t-h)}{h}\bigg)- \psi'(t)\bigg\|_\ka\to 0\quad \text{and}\\
  &\bigg\|\frac{\psi(t)-\SS(h)\psi(t)}{h} +\AA\psi(t)\bigg\|_\ka\to 0.
 \end{align*}
  Since $\big(u(t)\big)(x)=\big(\psi(t)\big)(x)$, we obtain that $0\leq -\big(\AA\psi(t)\big)(x)+\big(\psi'(t)\big)(x)$.
\end{proof}

\section{Stochastic representation}\label{sec:expec}

In this section, we derive a stochastic representation for the semigroup envelope $\SS$ using sublinear expectations. Such stochastic representations are of fundamental interest in various fields and, in particular, in the field of robust finance. The prime example for a sublinear expectation arising from a semigroup envelope for a particular family of semigroups is the $G$-expectation, cf.~Denis et al. \cite{MR2754968} and Peng~\cite{PengG},\cite{MR2474349}, and the corresponding Markov process, the $G$-Brownian Motion, is the analogue of a Brownian Motion in the presence of volatility uncertainty. More general forms of stochastic processes arising from semigroups are given by the class of so-called $G$-L\'evy processes, cf.\ Hu and Peng \cite{PengHu}, Neufeld and Nutz \cite{NutzNeuf}, and Denk et al.\ \cite{dkn}. In this section, we provide a similar representation for $\SS$ under an additional continuity assumption. We point out that our setup covers the aforementioned existing approaches. We start with a short introduction to the theory of nonlinear expectations. For a measurable space $(\Om,\FF)$, we denote the space of all bounded $\FF$-measurable functions (random variables) $\Om\to \R$ by $\LL^\infty(\Om,\FF)$. For two bounded random variables $X,Y\in \LL^\infty(\Om,\FF)$ we write $X\leq Y$ if $X(\om)\leq Y(\om)$ for all $\om\in \Om$. For a constant $\al\in \R$, we do not distinguish between $\al$ and the constant function taking the value $\al$.

\begin{definition}
 Let $(\Om,\FF)$ be a measurable space. A functional $\EE\colon \LL^\infty(\Om,\FF)\to\R$ is called a \textit{sublinear expectation} if for all $X,Y\in \LL^\infty(\Om,\FF)$ and $\la>0$
 \begin{enumerate}
  \item[(i)] $\EE(X)\leq \EE(Y)$ if $X\leq Y$,
  \item[(ii)] $\EE(\al)=\al$ for all $\al\in \R$,
  \item[(iii)] $\EE(X+Y)\leq \EE(X)+\EE(Y)$ and $\EE(\la X)=\la \EE(X)$.
 \end{enumerate}
  We say that $(\Om,\FF,\EE)$ is a \textit{sublinear expectation space} if there exists a set of probability measures $\PP$ on $(\Om,\FF)$ such that
  \[
   \EE(X)=\sup_{\P\in \PP} \E_\P(X)\quad \text{for all }X\in \LL^\infty(\Om,\FF),
  \]
  where $\E_\P(\cdot)$ denotes the expectation w.r.t. to the probability measure $\P$.
\end{definition}

\begin{definition}
 Let $L\subset \UC_\ka$ be a linear space. We say that $\SS$ is \textit{continuous from above} on $L$ if $\SS(t)u_n\searrow 0$ for all $t\geq 0$ and all $(u_n)_{n\in \N}\subset L$ with $u_n\searrow 0$ as $n\to \infty$.
\end{definition}

\begin{remark}\label{rem.contabove}\
\begin{enumerate}
  \item[a)] Assume that $M$ is compact. Then, by Dini's lemma, $\SS$ is continuous from above on $\UC_\kappa=\UC_{\rm b}$.
  \item[b)] Assume that $M$ satisfies the Heine-Borel property, i.e. every closed and bounded subset of $M$ is compact, and that $\ka\in \Con_0$. Then, $\UC_\ka=\{u\in \Con\, |\, \ka u\in \Con_0\}$, where $\Con_0$ denotes the closure of the space $\Con_c$ of all continuous functions with compact support w.r.t.\ $\|\cdot\|_\infty$. In fact, let $u\in \UC_\kappa$. Then, there exists a sequence $(u_n)_{n\in \N}\subset \Lipb$ with $\|u-u_n\|_\ka\to 0$ as $n\to \infty$. Since $\ka\in \Con_0$, it follows that $v_n:=\ka u_n\in \Con_0$ for all $n\in \N$. Since $\Con_0$ endowed with $\|\cdot\|_\infty$ is a Banach space and 
  \[
   \|\ka u-v_n\|_\infty=\|u-u_n\|_\kappa\to 0\quad \text{as }n\to \infty,
  \]
  we find that $\ka u\in \Con_0$. Now, assume that $\ka u\in \Con_0$. Then, there exists a sequence $(v_n)_{n\in \N}\subset \Con_c$ with $\|\ka u-v_n\|_\infty\to 0$. Defining $u_n:=\frac{v_n}{\ka}$ for $n\in \N$, we see that $u_n\in \Con_0\subset \UC_{\rm b}$. Since $\UC_{\rm b}\subset \UC_\kappa$ and
  \[
   \|u-u_n\|_\ka=\|\ka u-v_n\|_\infty\to 0,
  \]
  it follows that $u\in \UC_\kappa$. We have therefore established the equality $\UC_\ka=\{u\in \Con\, |\, \ka u\in \Con_0\}$. Let $(u_n)_{n\in \N}\subset \UC_\kappa$ with $u_n\searrow 0$ as $n\to \infty$. Since $v_n:=\ka u_n\in \Con_0$ for all $n\in \N$ with $v_n\searrow 0$ as $n\to \infty$, it follows that $\|u_n\|_\ka =\|v_n\|_\infty\to 0$ as $n\to \infty$ by Dini's lemma. In particular, the semigroup $\SS$ and in fact every continuous map $\UC_\ka\to \UC_\ka$ is continuous from above on $\UC_\ka$. 
  \item[c)] Assume that $\SS$ is continuous from above on $\Lipb$. the space $\Lipb$ is invariant under $\SS(t)$ for all $t\geq 0$. Note that $\SS(t)u\in \Lipb$ for all $u\in \Lipb$ and $t\geq 0$. Therefore, by \cite[Remark 5.4]{MR3824739}, $\SS(t)$ uniquely extends to an operator $\SS(t)\colon \Cb\to \Cb$, which is again continuous from above. Moreover, for every $n\in \N$, $v\in \Cb(M^{n+1})$ the mapping
  \[                                                                                                                                                                                                                              
   M^{n+1}\to \R,\quad (x_1,\ldots, x_n, x_{n+1})\mapsto \big(\SS(t)v(x_1,\ldots, x_n,\, \cdot \,)\big)(x_{n+1})
  \]
  is bounded and continuous.
 \end{enumerate}
\end{remark}

Continuity from above on $\Lipb$ will be crucial for the existence of a stochastic representation. In Remark \ref{rem.contabove} b), we have seen that, if $M$ satisfies the Heine-Borel property and $\ka\in \Con_0$, then $\SS$ is continuous from above on $\UC_\ka$. The following proposition, which is a generalisation of \cite[Proposition 2.8]{dkn}, gives a sufficient condition for the continuity from above on $\Lipb$ in the case that $\ka$ does not vanish at infinity and $M$ is (only) locally compact. Recall that $\Con_0$ is the closure of the space $\Lip_c$ of all Lipschitz continuous functions with compact support w.r.t.~the supremum norm $\|\cdot\|_\infty$, and that $\Con_0\subset \UC_{\rm b}\subset  \UC_\ka$.

\begin{proposition}\label{prop:A3}
 Suppose that for every $x\in M$ and every $\de>0$ there exists a function $\varphi_x\in \Con_0$ satisfying the following:
 \begin{enumerate}
  \item[(i)] $\varphi_x(x)=1$ and $0\leq \varphi_x\leq 1$,
  \item[(ii)] $\varphi_x\in \bigcap_{\la\in \La} D(A_\la)$ with $\sup_{\la\in \La}\|A_\la\varphi_x\|_\ka\leq \de$.
 \end{enumerate}
Then, $\SS$ is continuous from above on $\Lipb$.
\end{proposition}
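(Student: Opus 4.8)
The plan is to fix $t>0$ (the case $t=0$ being trivial, since $\SS(0)=\id$), a point $x\in M$, and a sequence $(u_n)_{n\in\N}\subset\Lipb$ with $u_n\searrow 0$, and to show that $\big(\SS(t)u_n\big)(x)\to 0$. Since $u_n\geq u_{n+1}$ and $\SS(t)$ is monotone, the sequence $\big(\SS(t)u_n\big)(x)$ is non-increasing and, as $u_n\geq 0$, bounded below by $0$; hence it suffices to produce, for each $\ep>0$, an index $N$ with $\big(\SS(t)u_n\big)(x)<\ep$ for all $n\geq N$. Writing $C:=\|u_1\|_\infty$ (so that $0\leq u_n\leq C$), the key idea is to dominate each $u_n$, for $n$ large, by a function of the form $\tfrac\ep2+2C(1-\ph_x)$, where $\ph_x$ is the cut-off from the hypothesis, and then to exploit that $\SS(t)$ barely moves $1-\ph_x$ at $x$ because the generators of $\ph_x$ are small.

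First I would choose $\de>0$ small (depending only on $\ep,x,t,C$ and the constant $\al$ from (A2), with the precise threshold fixed at the end) and take the associated member $\ph_x\in\Con_0$ with $\ph_x(x)=1$, $0\leq\ph_x\leq 1$ and $\sup_{\la\in\La}\|A_\la\ph_x\|_\ka\leq\de$. Here the local compactness of $M$ together with $\ph_x\in\Con_0$ enters decisively: the super-level set $K:=\{y\in M:\ph_x(y)\geq\tfrac12\}$ is closed and contained in the compact support of a uniform approximant of $\ph_x$, hence compact. On the compact set $K$ the continuous functions $u_n$ decrease pointwise to $0$, so Dini's theorem yields uniform convergence there; thus there is an $N$ with $u_n\leq\tfrac\ep2$ on $K$ for all $n\geq N$. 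Off $K$ one has $1-\ph_x>\tfrac12$, whence $2C(1-\ph_x)>C\geq u_n$. Combining the two regions gives the pointwise domination
\[
 u_n\leq \tfrac\ep2+2C(1-\ph_x)\qquad\text{for all }n\geq N.
\]

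Next I would apply $\SS(t)$ to this inequality and evaluate at $x$. Using monotonicity, subadditivity and positive homogeneity of $\SS(t)$ together with $\SS(t)1=1$ (Lemma \ref{lem:observ}), I obtain
\[
 \big(\SS(t)u_n\big)(x)\leq \tfrac\ep2+2C\,\big(\SS(t)(1-\ph_x)\big)(x),
\]
so it remains to bound the last term. The function $v:=1-\ph_x$ satisfies $v(x)=0$, lies in $\bigcap_{\la\in\La}D(A_\la)$, and $\sup_{\la\in\La}\|A_\la v\|_\ka=\sup_{\la\in\La}\|A_\la\ph_x\|_\ka\leq\de$ because $A_\la 1=0$; hence $v\in D_\La$ (as noted before Lemma \ref{generatorprep1}, with the corresponding constant proportional to $\de$). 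Therefore the quantitative estimate established in the proof of Proposition \ref{scontin} (the inductive bound \eqref{strongcont:2}, followed by taking the supremum over $\pi\in P_t$) applies and yields $\|\SS(t)v-v\|_\ka\leq L\,e^{\al t}t$ with $L$ proportional to $\de$. Since $v(x)=0$, this gives $\big(\SS(t)(1-\ph_x)\big)(x)\leq L e^{\al t}t/\ka(x)$, a quantity proportional to $\de$. Choosing $\de$ at the outset so that $2C\,L e^{\al t}t/\ka(x)<\tfrac\ep2$ then forces $\big(\SS(t)u_n\big)(x)<\ep$ for all $n\geq N$, as desired.

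The main obstacle, I expect, is to organize the argument so that the two small quantities are controlled \emph{independently}: the constant $\de$ (and hence $\ph_x$, and hence the compact set $K$) must be fixed \emph{before} Dini's theorem is invoked, so that the threshold $N$ may depend on $\ph_x$ while $\de$ does not depend on $N$. The other delicate point is the compactness of the super-level set $\{\ph_x\geq\tfrac12\}$, where precisely the two standing hypotheses ``$M$ locally compact'' and ``$\ph_x\in\Con_0$'' are used; this is exactly what makes Dini's theorem available on a non-compact state space. Once these are in place, the semigroup estimate on $1-\ph_x$ is a direct reuse of the machinery already developed for Proposition \ref{scontin}.
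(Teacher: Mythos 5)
Your proof is correct and follows essentially the same route as the paper's: both arguments hinge on the bound $\ka(x)\big(\SS(t)(1-\ph_x)\big)(x)\leq \de\, t e^{\al t}$, obtained from $A_\la 1=0$ and the quantitative estimate behind Proposition \ref{scontin}, combined with a Dini-type compactness argument that exploits $\ph_x\in\Con_0$, and both fix $\de$ (depending on $\ep$, $t$, $x$, $\|u_1\|_\infty$) before producing the index $N$. The only cosmetic difference is the decomposition: the paper splits $u_n\leq \|u_n\|_\infty(1-\ph_x)+u_n\ph_x$ and uses that $\|u_n\ph_x\|_\ka\to 0$, whereas you dominate $u_n\leq\tfrac{\ep}{2}+2C(1-\ph_x)$ pointwise (via Dini on the compact super-level set $\{\ph_x\geq\tfrac12\}$) before applying $\SS(t)$, which needs only sublinearity, monotonicity and $\SS(t)1=1$.
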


\begin{proof}
 Fix $t> 0$, $x\in M$ and $\de>0$. Notice that $1\in D(A_\lambda)$ with $A_\lambda 1=0$ since $S_\lambda(t) 1 =1$ for all $\lambda\in \Lambda$. Therefore, $(1-\varphi_x)\in \bigcap_{\la\in \La}D(A_\la)$ with $A_\lambda(1-\varphi_x)=-A_\lambda \varphi_x$. Since $\varphi_x(x)=1$, it follows that
 \begin{align*}
  \ka(x)\big[\big(\SS(t)(1-\varphi_x)\big)(x)\big]&\leq \|\SS(t)(1-\varphi_x)-(1-\varphi_x)\|_\ka\leq te^{\al t}\sup_{\la\in \La}\|A_\la \varphi_x\|_\ka \\
  &\leq \de te^{\al t}.
 \end{align*}
 Let $(u_n)_{n\in \N}\subset \Lipb$ with $u_n\searrow 0$ as $n\to \infty$ and $\ep>0$. Then, there exists some $\varphi_x\in \Con_0$ satisfying (i) and (ii) with $\de=\frac{\ep \ka(x)}{2te^{\al t}c}$, where $c:=\max\big\{1,\|u_1\|_\infty\big\}$. Then,
 \[
  \|u_n\|_\infty\big(\SS(t)(1-\varphi_x)\big)(x)\leq \frac{\ep}{2}\quad \text{for all }n\in \N.
 \]
 Moreover, there exists some $n\in \N$ such that $\|u_n\varphi_x\|_\ka<\frac{\ep}{2}$ since $\varphi_x\in \Con_0$. Hence,
 \[
  \big(\SS(t)u_n\big)(x)\leq \|u_n\|_\infty\big(\SS(t)(1-\varphi_x)\big)(x)+\big(\SS(t)(u_n\varphi_x)\big)(x)<\ep.
 \]
 This shows that $\SS(t)u_n\searrow 0$ as $n\to \infty$. Now, let $(u_n)_{n\in \N}\subset \Lipb$ and $u\in \Lipb$ with $u_n\searrow u$ as $n\to \infty$. Then,
 \[
  |\SS(t)u_n-\SS(t)u|\leq \SS(t)(u_n-u)\searrow 0 \quad \text{as }n\to \infty.
 \]
\end{proof}

Note that, although not explicitly stated in Proposition \ref{prop:A3}, the existence of a function $\ph_x\in \Con_0$ with $\ph_x(x)\neq 0$ for all $x\in M$ implies that $M$ is locally compact. Thus, Proposition \ref{prop:A3} is thus only applicable for locally compact $M$. The following theorem is a direct consequence of \cite[Theorem 5.6]{MR3824739}.

\begin{theorem}\label{stochrep}
 Assume that $M$ is a Polish space and that $\SS$ is continuous from above on $\Lipb$. Then, there exists a quadruple $(\Om,\FF,(\EE^x)_{x\in M},(X_t)_{t\geq 0})$ such that
 \begin{enumerate}
  \item[(i)] $X_t\colon \Om\to M$ is $\FF$-$\BB$-measurable for all $t\geq 0$,
  \item[(ii)] $(\Om,\FF,\EE^x)$ is a sublinear expectation space with $\EE^x(u(X_0))=u(x)$ for all $x\in M$ and $u\in \Cb$,
  \item[(iii)] For all $0\leq s<t$, $n\in \N$, $0\leq t_1<\ldots <t_n\leq s$ and $v\in \Cb(M^{n+1})$,
  \begin{equation}\label{eq.stochrep1}
   \EE^x\big(v(X_{t_1},\ldots,X_{t_n},X_t)\big)=\EE^x\left(\big(\SS(t-s)v(X_{t_1},\ldots,X_{t_n},\, \cdot\,)\big)(X_s)\right).
  \end{equation}
 \end{enumerate}
 In particular,
 \begin{equation}\label{eq.stochrep2}
  \big(\SS(t)u\big)(x)=\EE^x(u(X_t)).
 \end{equation}
 for all $t\geq 0$, $x\in M$ and $u\in \Cb$.
\end{theorem}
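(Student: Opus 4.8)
The plan is to reduce the statement to the nonlinear Kolmogorov extension theorem \cite[Theorem 5.6]{MR3824739} by verifying that the family $(\SS(t))_{t\geq 0}$ supplies exactly the input data required there, and then to read off the representation \eqref{eq.stochrep2} from the Markov-type identity \eqref{eq.stochrep1}. First I would record the structural properties already at hand: by Lemma \ref{lem:observ} each $\SS(t)$ is monotone, sublinear, normalised ($\SS(t)1=1$) and maps $\UC_\ka$ into itself, while Theorem \ref{main1}, and in particular the semigroup identity \eqref{dpp1}, supplies the Chapman--Kolmogorov relation $\SS(s+t)=\SS(s)\SS(t)$. Since by hypothesis $\SS$ is continuous from above on $\Lipb$ and leaves $\Lipb$ invariant, Remark \ref{rem.contabove} c) lets me extend every $\SS(t)$ uniquely to an operator $\SS(t)\colon \Cb\to \Cb$ that is again sublinear, monotone, normalised and continuous from above, and, crucially, such that for $v\in \Cb(M^{n+1})$ the map $(x_1,\dots,x_{n+1})\mapsto \big(\SS(t)v(x_1,\dots,x_n,\,\cdot\,)\big)(x_{n+1})$ is bounded and continuous.

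With these ingredients I would assemble, for each finite time grid $0\le t_1<\dots<t_n$ and each $x\in M$, a finite-dimensional sublinear expectation on $\Cb(M^n)$ by iterating the kernels $\SS(t_{i+1}-t_i)$ in the order dictated by the times, starting from $x$ at time $0$; the joint-continuity statement from Remark \ref{rem.contabove} c) guarantees that after integrating out one coordinate the result still lies in $\Cb$ of the remaining coordinates, so the iteration is well defined and depends continuously on $x$. The semigroup identity \eqref{dpp1} together with the normalisation $\SS(h)1=1$ translates directly into the consistency of this family under refinement of the grid and marginalisation of the terminal coordinate, and the continuity from above passes to each finite-dimensional layer. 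Since $M$ is Polish, these are precisely the hypotheses of \cite[Theorem 5.6]{MR3824739}, which then produces the measurable space $(\Om,\FF)$, the coordinate process $(X_t)_{t\geq 0}$ and the family $(\EE^x)_{x\in M}$ satisfying (i), (ii) and the identity \eqref{eq.stochrep1} of (iii).

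Finally I would deduce \eqref{eq.stochrep2}. Fixing $t\geq 0$, $x\in M$ and $u\in \Cb$, I apply \eqref{eq.stochrep1} with $s=0$, $n=1$, $t_1=0$ and $v(y_1,y_2):=u(y_2)$; the right-hand side becomes $\EE^x\big((\SS(t)u)(X_0)\big)$, and since $\SS(t)u\in \Cb$, property (ii) collapses this to $(\SS(t)u)(x)$, yielding $\EE^x(u(X_t))=(\SS(t)u)(x)$. The only genuine work, and the step I expect to be the main obstacle, is the bookkeeping needed to match our normalisations and the ordering of the iterated kernels with the precise formulation of \cite[Theorem 5.6]{MR3824739}: one has to check that \emph{continuity from above on} $\Lipb$ is strong enough to provide the $\sigma$-smoothness/tightness the cited extension result demands (which is exactly where the extension to $\Cb$ from Remark \ref{rem.contabove} c) and the Polish assumption on $M$ enter), and that consistency under time refinement survives the nonlinearity of the operators $\SS(t)$.
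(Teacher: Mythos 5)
Your proposal is correct and takes essentially the same route as the paper: the paper's entire proof consists of declaring the theorem a direct consequence of \cite[Theorem 5.6]{MR3824739}, with the extension of $\SS(t)$ to $\Cb$ and the continuity of the iterated kernels already prepared in Remark \ref{rem.contabove} c), exactly the ingredients you assemble. Your write-up simply makes explicit the bookkeeping the paper leaves implicit (verifying the consistency hypotheses of the extension theorem via \eqref{dpp1} and normalisation, and deducing \eqref{eq.stochrep2} by taking $s=0$, $n=1$ in \eqref{eq.stochrep1} together with property (ii)).
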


\begin{remark}\
\begin{enumerate}
 \item[a)] The quadruple $(\Om,\FF,(\EE^x)_{x\in M},(X_t)_{t\geq 0})$ can be seen as a nonlinear version of a Markov process. As an illustration, we consider the case, where the semigroup $\SS$ and thus $\EE^x$ is linear for all $x\in M$, and choose $v=u(X_t)1_B(Y)$ with $u\in \UC_{\rm b}$ and $B\in \BB^n$, where $\BB^n$ denotes the product $\sigma$-algebra of the Borel $\sigma$-algebra $\BB$. Then, $\EE^x=\E_{\P^x}$ is the expectation w.r.t.\ a probability measure $\P^x$ on $(\Om,\FF)$ for all $x\in M$. Using the continuity from above and Dynkin's lemma, Equation \eqref{eq.stochrep1} reads as
\[
 \E_{\P^x}\big(u(X_t)1_B(X_{t_1},\ldots, X_{t_n})\big)=\E_{\P^x}\big[\big(\SS(t-s)u\big)(X_s)1_B(X_{t_1},\ldots, X_{t_n})\big],
\]
which is equivalent to the Markov property
\begin{equation}\label{eq.linmarkprop}
 \E_{\P^x}\big(u(X_t)|\FF_s\big)=\big(\SS(t-s)u\big)(X_s) \quad \P^x\text{-a.s.},
\end{equation}
where $\FF_s:=\sigma\big(\{X_u\, |\, 0\leq u\leq s\}\big)$. On the other hand, if $\EE^x=\E_{\P^x}$, the Markov property \eqref{eq.linmarkprop} implies Property (iii) from Theorem \ref{stochrep}. 
 \item[b)] A natural question, in particular in view of \eqref{eq.stochrep1} is, if the nonlinear expectation $\EE^x$ can be extended to unbounded functions satisfying a certain growth condition. We would like to point out that \cite[Theorem 5.6]{MR3824739} a priori only applies to bounded functions. Using the fact that $\EE^x$ admits a representation in terms of a nonempty set $\PP^x$ of probability measures on $(\Om,\FF)$, i.e.
 \[
  \EE^x(Y)=\sup_{\P\in \PP^x} \E_\P(Y)\quad \text{for all }Y\in \LL^\infty(\Om,\FF),
 \]
 allows to define
 \[
  \EE^x(Y):=\sup_{\P\in \PP^x} \E_\P(Y)\in \R
 \]
 for $\FF$-measurable functions $Y\colon \Om\to \R$ with $\sup_{\P\in \PP^x} \E_\P(|Y|)<\infty$. On the other hand, \eqref{eq.stochrep2} gives rise to a well-defined notion of $\EE^x$ for functions of the form $u(X_t)$ with $u\in \UC_\kappa$ and $t\geq 0$. Consider a weight function $w\in \UC_\kappa$ with $w(x)\geq 0$ for all $x\in M$ and a measurable function $u\colon M\to \R$ with $|u(x)|\leq w(x)$ for all $x\in M$. Then, \eqref{eq.stochrep2} implies that
 \[
 \EE^x\big(|u(X_t)|\big)\leq \EE^x\big(w(X_t)\big)=\big(\SS(t)w\big)(x)<\infty
 \]
 for all $t\geq 0$ and $x\in M$.
\end{enumerate}
\end{remark}

%%%%%%%%%%%%%%%%%%%%%%%%%%%%%%%%%%%%%%%%%%%%%%%%%%%%%%%%%%%%%%%%%%%%%%%%%%%%%%%%%%
%%%%%%%%%%%%%%%%%%%%%%%%%%%%%%%%%%%%%%%%%%%%%%%%%%%%%%%%%%%%%%%%%%%%%%%%%%%%%%%%%%

\section{Connection to control theory}\label{sec:control}

In this section, we discuss our results in light of the standard literature and standard examples in control theory. In particular, we discuss the relation between the semigroup envelope and the value function of Meyer-type control problems. We further go into more detail on our notion of a viscosity solution in view of the standard one and uniqueness results for the latter.

\subsection{The notion of viscosity solution and uniqueness}\label{sec:uniqueness}
A priori, our notion of a viscosity solution is somewhat different from the classical one related to (standard) parabolic HJB equations. The key difference between both notions is the class of test functions. While in a standard setting, the class of test functions typically consists of sufficiently smooth functions defined on the parabolic domain $[0,\infty)\times M$, in our notion, we formally separate the space and time variable and consider differentiable functions $\psi\colon [0,\infty)\to \UC_\kappa$ taking values in a function space. Here, time regularity is given in terms of differentiability in $t$ w.r.t.\ the norm $\|\cdot\|_\kappa$, and the convergence of the difference quotient to the derivative is thus up to the weight $\kappa$ uniform in the space variable. Space regularity is given in terms of the abstract condition $\psi(t)\in \DD$, where
\[
 \DD:=\bigg\{u\in \bigcap_{\la\in \La}D(A_\la)\, \bigg|\, \sup_{\la\in \La}\|A_\la u\|_\ka<\infty \text{ and } \lim_{h\searrow 0}\bigg\|\frac{\SS(h)u-u}{h}-\AA u\bigg\|_\ka= 0 \bigg\}.
\]
Let us consider as an illustrative example, the case where $M=\R$, $\kappa=1$, and $A_\lambda=\frac{\lambda^2}{2}\partial_{xx}$ for $\lambda\in \Lambda:=[\sigma_\ell,\sigma_h]$ with $0<\sigma_\ell\leq \sigma_h$. That is, our control parameter is the volatility of a Brownian Motion. In this case, $\DD=\UC_b^2$ is the space of all twice differentiable functions with bounded and uniformly continuous derivatives. We therefore see that, in the case of partial differential equations, the set $\DD$ typically encodes some sort of space regularity in terms of differentiability in the space variable. This will become also clear in the examples in Section \ref{sec:excontrol}.

As we point out in Remark \ref{rem.uniqueness}, it is, in general, unclear how rich the class of test functions for a viscosity solution from Definition \ref{def.viscosity} is. Therefore, uniqueness is not given a priori and has to be checked on a case by case basis. However, it is worth noting that, if $M$ is, for example, an open subset of $\R^d$ with $d\in \N$, the standard notion of a viscosity solution is very robust in view of the considered class of test functions, cf. Ishii \cite[Remark 1.5 and Example 1.2]{MR3135341}. Typically, one chooses functions that are twice differentiable on $M\times [0,\infty)$ with continuous derivatives up to order $2$ as test functions. However, the notion of a viscosity solution and, in particular, uniqueness is not affected by replacing $\Con^2(M\times [0,\infty))$, e.g., by $\Con^\infty_c([0,\infty)\times M)$, i.e. functions that are compactly supported and infinitely smooth functions. Roughly speaking this is due to the fact that the notion of a viscosity solution is a very local solution concept, and therefore only the local behaviour of test functions matters. We point out that under very mild conditions, e.g., for all $\de>0$ and $x\in M$, the existence of a cut-off function $\ph\in \DD$ with $0\leq \ph\leq 1$, $\ph(x)=0$, and $\ph(y)=1$ for $y\in M$ with $d(x,y)\geq \de$, our notion of a viscosity solution can also be formulated in terms of local extrema instead of global extrema; thus leading to a local solution concept as well.

We build on Remark \ref{rem.uniqueness} in the case that $M$ is an open subset of $\R^d$ with $d\in \N$. Assume that $\Con_c^\infty\subset \DD$, where $\Con^\infty_c$ denotes the space of all infinitely differentiable functions $M\to \R$ with compact support, and let $\psi\in \Con^\infty_c([0,\infty)\times M)$. Since $\psi$ has a compact support and $\kappa$ is continuous, it follows that
\[
 \sup_{x\in M}\kappa(x)\bigg|\frac{\psi(t+h,x)-\psi(t,x)}{h}-\partial_t\psi(t,x)\bigg| \leq Lh \to 0 \quad \text{as }h\searrow 0
\]
for all $t>0$. In particular, the function $$\psi\colon [0,\infty)\to \UC_\kappa, \; t\mapsto \psi(t):=\psi(t,\, \cdot\,)$$
is differentiable. Moreover $\psi(t)=\psi(t,\,\cdot\, )\in \Con_c^\infty\subset \DD$. Therefore, assuming that (at least) $\Con_c^\infty\subset \DD$, any $\psi\in \Con^\infty_c([0,\infty)\times M)$ is a test function in the sense of Definition \ref{def.viscosity}. Thus, the notion of a viscosity solution from Definition \ref{def.viscosity} coincides with the usual notion in most cases covered by the standard theory. As a consequence, uniqueness of viscosity solutions can be obtained from Ishii's lemma.

\subsection{Semigroup envelopes as value functions to optimal control problems}

In this section, we identify the semigroup envelope as the value function of a space-time discrete Meyer-type optimal control problem under the additional assumption that each semigroup $S_\lambda$ is a family of transition kernels of a stochastic process. In the following, we describe the broad idea behind the approach using semigroup envelopes. Assume that, $S_\lambda$ is a semigroup of transition kernels of a controlled stochastic process $(X_t^{x, \lambda})_{t\geq}$ (for the sake of a simplified notation defined on the same probability space) with control set $\Lambda$ and control parameter $\lambda\in \Lambda$, i.e.
\[
 \big(S_\lambda(t)u\big)(x)=\E\big[u\big(X_t^{\la,x}\big)\big]
\]
for $x\in M$, $\lambda\in \Lambda$, $t\geq 0$, and $u\in \UC_\kappa$. Then, for a fixed time-horizon $t\geq 0$, one typically considers a (suitably defined) set of admissible controls $\Lambda_{\rm ad}^t$ and the value function 
\begin{equation}\label{eq.optcont}
 V(u,t,x):=\sup_{\lambda\in \Lambda_{\rm ad}^t}\E\big[u\big(X_t^{\la,x}\big)\big]
\end{equation}
of the related Meyer-type optimal control problem. Note that this is usually only possible if the controlled dynamics satisfy a certain structure. The idea behind the semigroup envelope is to transform the dynamic optimization problem given in terms of the value function \eqref{eq.optcont} into a series of static optimization problems with value functions of the form
\begin{equation}\label{eq.static}
  \sup_{\lambda\in \Lambda}\E\big[u\big(X_t^{\la,x}\big)\big]=\sup_{\lambda\in \Lambda}\big(S_\la(t)u\big)(x)=:\big(\EE_t u\big)(x),
\end{equation}
Now, one considers a partition $\pi=\{t_0,\ldots, t_m\}\in P_t$ with $0=t_0<\ldots <t_m=t$ of the time-interval $[0,t]$, and one optimizes after each time-step, leading to the expression
\begin{equation}\label{eq.static1}
  \big(\EE_\pi u\big)(x)\big(\EE_{t_1-t_0}\cdots \EE_{t_m-t_{m-1}} u\big)(x).
\end{equation}
 Letting the mesh size $|\pi|$ of the partition $\pi$ tend to zero or taking the supremum over all partitions $\pi\in P_t$ leads to a formal approximation of the dynamic optimization problem \eqref{eq.optcont} in terms of a series of static control problems on a grid that becomes finer and finer as the mesh size tends to zero.
 
 In the sequel, we will make this approximation rigorous by choosing the set of admissible controls as space-time discrete controls. To that end, we consider static controls of the form
 \[
  \Lambda_M:=\bigg\{(\lambda_i, B_i)_{i\in \N}\in (\Lambda\times \BB)^\N\, \bigg|\, B_i\cap B_j=\es\text{ for }i\neq j\text{ and }\bigcup_{i\in \N}B_i=M\bigg\}
 \]
One can think of $\lambda=(\lambda_i, B_i)_{i\in \N}\in (\Lambda\times \BB)^\N\in \Lambda_M$ as a function taking the value $\lambda_i$ on $B_i$ for each $i\in \N$. For $\lambda=(\lambda_i,B_i)_{i\in \N}\in (\Lambda\times \BB)^\N\in \Lambda_M$, we define
\begin{equation}\label{sqi}
\big(S_\la(t) u\big)(x):=\sum_{i\in \N}1_{B_i}(x)\big(S_{\la^i}(t)u\big)(x)
\end{equation}
for all $x\in M$ and $u\in \UC_\kappa$. We now add a dynamic component, and define
\begin{equation}\label{eq.admiss}
 \Lambda_{\rm ad}^t:=\bigg\{ (\la_k,h_k)_{k=1,\ldots, m}\in \big(\La_M\times [0,t]\big)^m\, \bigg|\,  m\in \N,\; \sum_{k=1}^m h_k=t\bigg\}.
\end{equation}
Roughly speaking, the set $\Lambda_{\rm ad}^t$ corresponds to the set of all space-time discrete admissible controls for the control set $\La$. For $\lambda=(\la_k,h_k)_{k=1,\ldots, m}\in \Lambda_{\rm ad}^t$ with $m\in \N$ and $u\in \UC_\kappa$, we define
\[
 J_\lambda u:=S_{\la_1}(h_1)\cdots S_{\la_m}(h_m)u,
\]
 where $S_{\la_k}(h_k)$ is defined as in \eqref{sqi} for $k=1,\ldots, m$. Then, for all $t\geq 0$, $u\in \UC_\kappa$, and $x\in M$, 
\begin{equation}\label{primaldual}
 \big(\SS(t)u\big)(x)=\sup_{\lambda\in \Lambda_{\rm ad}^t}\big(J_\lambda u\big)(x).
\end{equation}
That is, the semigroup envelope is the value function of an abstract analogue of the optimal control problem \eqref{eq.optcont} with $\Lambda_{\rm ad}^t$ given as in \eqref{eq.admiss}. In fact, by definition of $\Lambda_{\rm ad}^t$, it follows that $\sup_{\lambda\in \Lambda_{\rm ad}^t}J_\lambda u\leq \SS(t)u$ for all $t\geq 0$ and $u\in \UC_\ka$. On the other hand, let $\ep>0$ and $\pi=\{t_0,\ldots, t_m\}\in P_t$ with $0=t_0<\ldots< t_m=t$, and define $h_k:=t_k-t_{k-1}$ for $k =1,\ldots, m$. By a backward recursion, we may choose an $\frac{\ep}{2m}$-optimizer of $\EE_{h_k}\cdots \EE_{h_m} u$ for each $x\in M$ and $k=m,\ldots, 1$. Since $M$ is separable, there exist $\la_1,\ldots, \lambda_m\in \La_M$ such that $$\EE_\pi u=\EE_{h_1}\cdots \EE_{h_m} u\leq S_{\la_1}(h_1)\cdots S_{\la_m}(h_m)u+\ep= J_\lambda u+\ep$$ where $\lambda:=(\la_k,t_k-t_{k-1})_{k=1,\ldots, m}\in \Lambda_{\rm ad}^t$. Letting $\ep\to 0$ and taking the supremum over all $\pi\in P_t$ and $\lambda\in \Lambda_{\rm ad}^t$, yields $\SS(t)u\leq \sup_{\lambda\in \Lambda_{\rm ad}^t}J_\lambda u$.

Considering standard cases in optimal control, the connection between semigroup envelopes and the value function of a Meyer-type optimal control problem can also be established a posteriori, since both lead to a viscosity solution to the same HJB-equation. In these cases, one thus sees that the optimizing over space-time discrete admissible controls, which we have discussed in this section, is equivalent to optimizing over usual admissible controls, which typically possess a nondiscrete structure.

\subsection{Some illustrative examples from control theory}\label{sec:excontrol}

In this section, we discuss two examples in the context of control theory. For $k\in \N_0$, let $\Lipb^k$ denote the space of all $k$-times differentiable functions with bounded and Lipschitz continuous (Fr\'echet) derivatives up to order $k$.

\begin{example}[Geometric Brownian Motion]
 Let $M=\R$ and $\La$ be a nonempty set of tuples $(\mu,\si)\subset \R\times [0,\infty)$ with
 \[
  \beta:=\sup_{(\mu,\si)\in \La}|\mu|+\frac{\si^2}{2}<\infty
 \]
 Let $\la=(\mu,\si)\in \La$, $p\geq 1$, and $W$ be a Brownian Motion on a probability space $(\Om,\FF,\P)$. Define
 \[
  X_t^\la:=\exp\bigg(t\big(\mu -\tfrac{\si^2}{2}\big) +\si W_t\bigg)
 \]
 for $t\geq 0$ and $x\in \R$. Then,
 \[
  \E(|X_t^\la|^p)^{\tfrac{1}{p}}= e^{\left(\mu+\tfrac{(p-1)\si^2}{2}\right) t}\leq e^{p \be t}.
 \]
  Moreover,
\begin{align*}
  \E\big(|X_t^\la-1|^2\big)&=1-2\E\big(X_t^\la\big)+\E\big(|X_t^\la|^2\big)=1-2e^{\mu t}+e^{(2\mu +\sigma^2)t}\\
  & \leq 1-2e^{-\be t}+e^{2\be t}.
 \end{align*}
 Let $\ka(x):=(1+|x|)^{-p}$ for $x\in \R$ and $S_\la$ be given by
 \[
  \big(S_\la(t)u\big)(x):=\E\big(u(x X_t^\la)\big)
 \]
 for $u\in \UC_\ka$, $t\geq 0$, and $x\in \R$. Then, it follows that $\|S_\lambda(t)u\|_\ka\leq e^{p \be t} \|u\|_\ka$ for $t\geq 0$ and $u\in \UC_\ka$. Moreover, for $u\in \Lipb$, $\|u\|_{\Lip}\leq e^{\be t}\|u\|_{\Lip}$ and
 \begin{equation}\label{eq:GBM2}
  \|S_\lambda(t)u-u\|_\ka \leq \|u\|_{\Lip}\E \big(|X_t^1-1|\big)\leq \sqrt{1-2e^{-\be t}+e^{2\be t}}\to 0\quad \text{as }t\to 0.
 \end{equation}
 Therefore, by Theorem \ref{main1} and Proposition \ref{scontin}, the semigroup envelope $\SS$ for the family $(S_\la)_{\la\in \La}$ exists and is a strongly continuous Feller semigroup. Let $u\in \Lipb^2$ with compact support $\supp(u)$ and 
 $A_\la u\in \Lipb$ be given by
 \[
  \big(A_\la u\big)(x):= \mu xu'(x) +\frac{\si^2x^2}{2}u''(x)\quad \text{for }x\in \R.
 \]
 Since $\supp(u)$ is compact,
 \[
  \sup_{\la\in \La}\|A_\la u\|_\infty<\infty\quad \text{and}\quad  C_u:=\sup_{\la\in \La}\|A_\la u\|_\Lip<\infty.
 \]
 By Ito's formula, it follows that
 \[
  \frac{\big(S_\la(h)u\big)(x)-u(x)}{h}=\frac{1}{h}\int_0^h \big(S_\la(s)A_\la u\big)(x)\, {\rm d}s
 \]
 for all $h>0$ and $x\in \R$, which, together with \eqref{eq:GBM2}, implies that
 \[
  \bigg\|\frac{S_\la(h)u-u}{h}-A_\la u\bigg\|_\ka \leq C_u\sqrt{1-2e^{-\be h}+e^{2\be h}}\to 0\quad \text{as }h\searrow 0.
 \]
 It follows that the set of all $u\in \Lipb^2$ with compact support $\supp(u)$ is contained in $\DD$. By Theorem \ref{viscosity1}, we thus obtain that $u(t):=\SS(t)u_0$, for $t\geq 0$, defines a viscosity solution to the fully nonlinear Cauchy problem
 \begin{eqnarray*}
  \partial_t u(t,x)&=&\sup_{(\mu,\si)\in \La} \left(\mu x\partial_xu(t,x)+\frac{\si^2x^2}{2}\partial_{xx}u(t,x)\right), \quad (t,x)\in (0,\infty)\times \R,\\
  u(0,x)&=&u_0(x),\quad x\in \R.
 \end{eqnarray*}
 Under the nondegeneracy condition $\inf_{(\mu,\sigma)\in \Lambda} |\sigma|>0$, the above HJB equation has a unique viscosity solution. By Remark \ref{rem.contabove} b), the semigroup $\SS$ is continuous from above. The nonlinear Markov process related to $\SS$ can be seen as a geometric $G$-Brownian Motion (cf. Theorem \ref{stochrep}).
\end{example}

\begin{example}[Ornstein-Uhlenbeck processes on separable Hilbert spaces]\label{ex:OU}
 We consider the case where $M=H$ is a real separable Hilbert space. Let $\La$ be a set of triplets $(B,m,C)$, where $m\in H$, $B\in L(H)$, and $C\in L(H)$ is a self-adjoint positive semidefinite trace class operator, with
 \[
  \be:=\sup_{(B,m,C)\in \La} \big(\|B\| + \|m\|+\| C\|_{\tr}\big)<\infty.
 \]
 Let $\lambda=(B,m,C)\in \Lambda$, $T_B(t):=e^{tB}\in L(H)$, for $t\geq 0$, and $W^C$ be an $H$-valued Brownian Motion with covariance operator $C$ on a probability space $(\Om,\FF,\P)$. For $t\geq 0$, we define
 \[
  X_t^\lambda:=\int_0^t T_B(s)m\, {\rm d}s+\int_0^t T_B(t-s)\, {\rm d}W^C_s
 \]
  and $S_\la$ by
 \[
  \big(S_\la (t)u\big)(x):=\E\big(u(T_B(t)x+X_t^\la)\big)
 \]
 for $x\in H$, $t\geq 0$, and $u\in \UC_\ka$. Moreover, let $\ka:=(1+\|x\|^2)^{-1}$ for $x\in H$. Using basic facts from (infinite-dimensional) stochastic calculus and \eqref{ex:koop1}, below, for $F(x)=B(x)+m$,
 \begin{align*}
   1+\E(\|T_B(t)x+X_t^\lambda\|^2)&\leq 1+\bigg\|T_B(t)x+\int_0^t T_B(s)m\, {\rm d}s\bigg\|^2+\int_0^t e^{2\|B\|s}\|C\|_{\tr}\, {\rm d}s\\
   &\leq (1+\|x\|^2)e^{2\big(\|B\|+\|m\|\big)t}+e^{2\|B\|t}\|C\|_{\tr}t\\
  &\leq (1+\|x\|^2)e^{2\be t}
 \end{align*}
 for all $t\geq 0$ and $x\in H$, which implies that $\|S_\la(t) u\|_{\ka}\leq e^{2\be t }\|u\|_\ka$ for all $t\geq 0$ and $u\in \Lipb$. By \eqref{ex:koop2}, below, $\|S_\la(t) u\|_\Lip\leq e^{\be t}\|u\|_\Lip$ for all $t\geq 0$ and $u\in \Lipb$. For $u\in \Lipb^2$, let
 \[
  C_u:=\max\{\|D_x u\|,\|D_x^2 u\|_\infty,\|D_x^2 u\|_{\Lip}\},
 \]
 where $D_x$ and $D_x^2$ denote the first and second Fr\'echet derivative in the space-variable, and $A_\la u\in C_\ka$ be given by
 \[
  \big(A_\la u\big)(x) =D_x u(x)(Bx+m)+\frac{1}{2}\tr\big(CD_x^2u(x)\big)
 \]
 for $x\in H$. Then, for all $h\geq 0$ and $x\in H$,
 \begin{align*}
    \big|\big(S_\la (h)A_\la  u\big)(x)-\big(A_\la u\big)(x)\big|\leq  C_u \be (1+\|x\|) \E\big(\|T_B(t)x+X_t^\la-x\|\big).
 \end{align*}
We estimate the last term using \eqref{ex:koop1}, below, and obtain that
 \begin{align*}
  \E\big(\|T_B(t)x+X_t^\la-x\|\big)&\leq \big(e^{(\|B\|+\|m\|)t}-1\big)\big(1+\|x\|\big)+\sqrt{\|C\|_{\tr}t}\\
  &\leq (1+\|x\|)\big(e^{\be t}-1+\sqrt{\be t}\big).
 \end{align*}
 Therefore, 
 \[
  \|S_\la (h)A_\la  u-A_\la u\|_\ka \leq C_u \sqrt{2}\be \big(e^{\be h}-1+\sqrt{\be h}\big).
 \]
 By Ito's formula, it follows that
 \[
  \frac{\big(S_\la (h)u\big)(x)-u(x)}{h}=\frac{1}{h}\int_0^h \big(S_\la (s)Au\big)(x)\, {\rm d}s
 \]
 for all $h>0$ and $x\in H$, which implies that
 \[
  \bigg\|\frac{S_\la (h)u-u}{h}-A_\la u\bigg\|_\ka \leq C_u \sqrt{2}\be \big(e^{\be h}-1+\sqrt{\be h}\big)\to 0 \quad \text{as }h\searrow 0.
 \]
 In order to show that $\Lipb^2\subset \DD$, it remains to show that $\SS$ is strongly continuous. For this we invoke Proposition \ref{critstrongcont2}. Note that $\Lipb^2$ is not dense in $\Lipb$ if $H$ is infinite-dimensional. Let $\de\in (0,1]$ and $\varphi\colon [0,\infty)\to [0,1]$ infinitely smooth with $\varphi(s)=1$ for $x\in \big[0,\tfrac{\de}{2}\big]$ and $\varphi(s)=0$ for $s\in [\de,\infty)$. For $x,y\in H$, let $\varphi_x(y):=\varphi(\|y-x\|)$. Then, $\varphi_x\in \Lipb^2$ with
 \[
 \|D_x\varphi_x \|_\infty\leq \|\varphi'\|_\infty\quad \text{and}\quad \|D_x^2\varphi_x\|_\infty\leq \frac{3}{\de}\|\varphi'\|_\infty+\|\varphi''\|_\infty\quad  \text{for all }x\in M.
 \]
 Hence,
 \[
  \|A_\la \varphi_x\|_\ka\leq \frac{5\be }{2\de}\max\big(\|\varphi'\|_\infty+\|\varphi''\|_\infty\big)=:L
 \]
 for all $x\in M$. Therefore, by Proposition \ref{critstrongcont2}, the semigroup $\SS$ is strongly continuous. Altogether, we have shown that the assumptions (A1) and (A2) are satisfied, the semigroup envelope $\SS$ is strongly continuous and $\Lipb^2\subset \DD$. By Theorem \ref{viscosity1}, we thus obtain that $u(t):=\SS(t)u_0$, for $t\geq 0$, defines a viscosity solution to the fully nonlinear PDE
 \begin{eqnarray*}
  \partial_t u(t,x)&=&\sup_{(B,m,C)\in \La}\left( D_x u(t,x)(Bx+m)+\frac{1}{2}\tr\big(CD_x^2u(t,x)\big)\right),\\
  &&\qquad \qquad\qquad \qquad\qquad \qquad\qquad \qquad \qquad\quad (t,x)\in (0,\infty)\times H,\\
  u(0,x)&=&u_0(x),\quad x\in H.
 \end{eqnarray*}
 We point out that, by Remark \ref{rem.uniqueness}, the class of test functions in the definition of a viscosity solution contains the set $\Lipb^{1,2}[0,\infty)\times H)$. 
 If $H=\R^d$, the semigroup $\SS$ is continuous from above by Remark \ref{rem.contabove} b), which implies the existence of an O-U-process under a nonlinear expectation which represents $\SS$ (cf. Theorem \ref{stochrep}).
\end{example}
 
\section{Further examples}\label{sec:ex}
For $k\in \N_0$, let $\Lipb^k$ denote the space of all $k$-times differentiable functions with bounded and Lipschitz continuous derivatives up to order $k$.

\begin{example}[Koopman semigroups on real separable Banach spaces]
 We consider the case, where the state space $M=X$ is a real separable Banach space. We denote topological dual space of $X$ by $X'$ and the operator norm on $X'$ by $\|\cdot\|_{X'}$. We consider a nonempty set $\La$ of Lipschitz continuous functions $F\colon X\to X$ with
 \[
  \be:=\sup_{F\in \La}\bigg(\sup_{\substack{x,y\in M\\ x\neq y}}\frac{\|F(x)-F(y)\|}{\|x-y\|}\bigg)<\infty\quad \text{and}\quad \al:=\be+\sup_{F\in \La}\|F(0)\|<\infty.
 \]
 Let $F\in \La$, and denote by $\Phi_F\colon [0,\infty)\times X\to X$ the \textit{continuous semiflow} related to the ODE $x'=F(x)$, i.e., for $x\in X$, $\Phi(\,\cdot\,,x)$ is the unique solution to the initial value problem
 \begin{align}
 \partial_t\Phi_F(t,x)&=F\big(\Phi_F(t,x)\big),\quad \text{for }t\geq 0,\\
 \Phi_F(0,x)&=x.
 \end{align}
 Then, by Gronwall's lemma,
 \begin{equation}\label{ex:koop2}
  \|\Phi_F(t,x)-\Phi_F(t,y)\|\leq e^{\be t}\|x-y\|
 \end{equation}
 for all $t\geq 0$ and $x,y\in X$. Moreover,
 \[
  1+\|x\|+\|\Phi_F(t,x)-x\|\leq 1+\|x\|+\al \int_0^t 1+\|x\|+\|\Phi_F(s,x)-x\|\, {\rm d}s
 \]
 for all $t\geq 0$ and $x\in X$. Again, by Gronwall's lemma, it follows that
 \begin{equation}\label{ex:koop1}
  1+\|\Phi_F(t,x)\|\leq 1+\|x\|+\|\Phi_F(t,x)-x\|\leq \big(1+\|x\|\big)e^{\al t}
 \end{equation}
 for all $t\geq 0$ and $x\in X$. Let $p\in (0,\infty)$ and $\ka(x):=\big(1+\|x\|\big)^{-p}$ for all $x\in X$. For $u\in \UC_\ka$, $t\geq 0$, and $x\in X$, we define
 \[
  \big(S_F(t)u\big)(x):=u\big(\Phi_F(t,x)\big).
 \]
 Then, by \eqref{ex:koop1}, for $u\in \UC_\ka$, $t\geq 0$, and $x\in X$,
 \[
  \big|\big(S_F(t)u\big)(x)\big|\leq \|u\|_\ka\big(1+\|\Phi_F(t,x)\|\big)^p\leq \|u\|_\ka\big(1+\|x\|\big)^pe^{\al p t},
 \]
 which implies that $\|S_F(t)u\|_\ka\leq e^{\al pt}\|u\|_\ka$. Moreover, Equation \eqref{ex:koop2} yields that $\|S_F(t)u\|_{\Lip}\leq e^{\be t}\|u\|_{\Lip}$ for all $u\in \Lipb$. We have therefore shown that the family of semigroups $(S_F)_{F\in \La}$ satisfies the assumptions (A1) and (A2), so that the semigroup envelope of the family $(S_F)_{F\in \La}$ exists. We continue by showing that the semigroup envelope $\SS$ is strongly continuous. Let $\theta:=\min\{1,p\}$ and $u\in \UC_b$ with
 \[
  C_{u,\theta}:=\sup_{x,y\in M}\frac{|u(x)-u(y)|}{|x-y|^\theta}<\infty.
 \]
 Again, by \eqref{ex:koop1},
 \[
  \big|\big(S_F(t)u\big)(x)-u(x)\big|\leq C_{u,\theta} \|\Phi_F(t,x)-x\|^\theta\leq C_{u,\theta}\big(1+C+\|x\|\big)^\theta\big(e^{\al t}-1\big)^\theta.
 \]
 Therefore,
 \begin{equation}\label{ex:koop3}
   \|S_F(t)u-u\|_\ka\leq C_{u,\theta} \big(e^{\al t}-1\big)^\theta.
 \end{equation}
 Since the set of all H\"older continuous functions of degree $\theta$ is dense in $\UC_b$ w.r.t. $\|\cdot\|_\infty$ (and consequently w.r.t. $\|\cdot\|_\kappa$), and $\UC_b$ is dense in $\UC_\kappa$ w.r.t. $\|\cdot \|_\kappa$, Proposition \ref{scontin} implies that the semigroup envelope $\SS$ is strongly continuous.
 Let $u\in \Lipb^1$ with bounded support $\supp( u):=\overline{\{x\in X\, |\, u(x)\neq 0\}}\subset X$, i.e. $\supp( u)\subset B(0,R)$ for some $R>0$, and let $A_Fu\in \Lipb$ be given by
 \[
  \big(A_Fu\big)(x):=u'(x)F(x)\quad \text{for }x\in X,
 \]
 where $u'\colon X\to X'$ denotes the (first) Fr\'echet derivative of $u$. Since $\supp( u)$ is bounded,
 \[
  \sup_{F\in \La}\|A_Fu\|_\infty<\infty \quad \text{and}\quad C_u:=\sup_{F\in \La} \|A_Fu\|_\Lip <\infty.
 \]
 By the chain rule and the fundamental theorem of infinitesimal calculus, it follows that
 \[
  \frac{\big(S_F(h)u\big)(x)-u(x)}{h}=\frac{1}{h}\int_0^h \big(S_F(s)A_Fu\big)(x)\, {\rm d}s
 \]
 for all $h>0$ and $x\in X$, which, together with \eqref{ex:koop3}, implies that
 \[
  \bigg\|\frac{S_F(h)u-u}{h}-A_Fu\bigg\|_\ka\leq C_u\big(e^{\al h}-1\big)\to 0\quad \text{as }h\searrow 0.
 \]
 Hence, $\DD$ contains the set of all $u\in \Lipb^1$ with bounded support $\supp (u)$. By Theorem \ref{viscosity1}, we thus obtain that $u(t):=\SS(t)u_0$, for $t\geq 0$, defines a viscosity solution to the fully nonlinear PDE
 \begin{eqnarray*}
  \partial_t u(t,x)&=&\sup_{F\in \La} D_x u(t,x)F(x), \quad (t,x)\in (0,\infty)\times X,\\
  u(0,x)&=&u_0(x),\quad x\in X,
 \end{eqnarray*}
 where $D_x$ denotes the (first) Fr\'echet derivative in the space-variable. If $X=\R^d$, the semigroup envelope $\SS$ is continuous from above by Remark \ref{rem.contabove} b). In this case, Theorem \ref{stochrep} implies the existence of a Markov process under a nonlinear expectation related to $\SS$. This Markov process can be viewed as a nonlinear drift process.
\end{example}

\begin{example}[L\'evy Processes on abelian groups]
 Let $M=G$ be an abelian group with a translation invariant metric $d$ and $\ka(x):=1$ for all $x\in M$. Let $(S(t))_{t\geq 0}$ be a Markovian convolution semigroup, i.e. a semigroup arising from a L\'evy process. Then, $(S(t))_{t\geq 0}$ is a strongly continuous Feller semigroup of linear contractions (cf. \cite{dkn}). Moreover, due to the translation invariance, $\|S(t)u\|_{\Lip}\leq \|u\|_{\Lip}$ for all $t\geq 0$ and $u\in \Lipb$. Now, let $(S_\la)_{\la\in \La}$ be a family of Markovian convolution semigroups with generators $(A_\la)_{\la\in \La}$. Then, the assumptions (A1) - (A2) are satisfied. We refer to \cite{dkn} for examples, where the semigroup envelope is strongly continuous. In particular, all examples from \cite{dkn} fall into our theory. In the case, where $G=H$ is a real separable Hilbert space, we can improve the result obtained in \cite[Example 3.3]{dkn}. In this case, by the L\'evy-Khintchine formula (see e.g. \cite[Theorem 5.7.3]{MR874529}), every generator $A$ of a Markovian convolution semigroup is characterized by a L\'evy triplet $(b,\Si,\mu)$, where $b\in H$, $\Si\in L(H)$ is a self-adjoint positive semidefinite trace-class operator and $\mu$ is a L\'evy measure on $H$. For $u \in \Lipb^2(H)$ and a L\'evy triplet $(b,\Si,\mu)$, the generator $A_{b,\Si,\mu}$ is given by
 \begin{align*}
  \big(A_{b,\Si,\mu}u\big)(x)= \langle &b, D_x u(x)\rangle + \frac{1}{2}{\rm tr} \big(\Si D_x^2u(x)\big)\\
  &+\int_{H} u(x+y)-u(x)-\langle D_x u(x), h(y)\rangle\, {\rm d}\mu(y)
 \end{align*}
 for $x\in H$. Here, $D_x$ and $D_x^2$ denote the first and second Fr\'echet derivative in the space-variable, respectively, and the function $h\colon H\to H$ is defined by $h(y)=y$ for $\|y\|\leq 1$ and $h(y)=0$ whenever $\|y\| > 1$. Let $\La$ be a nonempty set of L\'evy triplets. We assume that
 \begin{equation}\label{lev1}
  C:=\sup_{(b,\Si,\mu)\in \La} \bigg(\|b\|+\|\Si\|_{\tr }+\int_{H} 1\wedge \|y\|^2\, {\rm d}\mu(y)\bigg)<\infty.
 \end{equation}
 Note that \eqref{lev1} does not exclude any L\'evy triplet a priori. Under \eqref{lev1}, the semigroup envelope $\SS$ is strongly continuous on $\Lipb^2$. In order to show that $\Lipb^2\subset \DD$, by the computations in \cite{dkn}, it suffices to show that $\SS$ is strongly continuous. For this we invoke Proposition \ref{critstrongcont1}. For $\de>0$, we choose the family $\big(\varphi_x)_{x\in H}$ as in the previous example. Since $\big(\SS(t)v\big)(x)=\big(\SS(t)v(x+\cdot )\big)(0)$ for all $v\in \UC_\ka$, $x\in H$ and $t\geq 0$, it follows that
 \[
  \big(\SS(t)(1-\varphi_x)\big)(x)=\big(\SS(t)(1-\varphi_0) \big)(0)
 \]
  for all $x\in H$ and $t\geq 0$. Defining $f(t):=\big(\SS(t)(1-\varphi_0) \big)(0)$ for $t\geq 0$, it follows that $f$ is continuous with $f(0)=0$. Therefore, by Proposition \ref{critstrongcont1}, the semigroup $\SS$ is strongly continuous. Altogether, we have shown that under the condition \eqref{lev1}, the assumptions (A1) and (A2) are satisfied, the semigroup envelope $\SS$ is strongly continuous and $\Lipb^2\subset \DD$.
  By Theorem \ref{viscosity1}, we thus obtain that $u(t):=\SS(t)u_0$, for $t\geq 0$, defines a viscosity solution to the fully nonlinear Cauchy problem
  \begin{eqnarray*}
  u_t(t,x)&=&\sup_{(b,\Si,\mu)\in \La} \big(A_{b,\Si,\mu} u(t)\big)(x), \quad (t,x)\in (0,\infty)\times H,\\
  u(0,x)&=&u_0(x),\quad x\in H.
 \end{eqnarray*}
  If $H=\R^d$ and the set of L\'{e}vy measures within the set of L\'evy triplets $\La$ is tight, Proposition \ref{prop:A3} implies that the semigroup envelope $\SS$ is continuous from above, leading to the existence of a nonlinear L\'{e}vy process related to $\SS$. However, due to the translation invariance of the semigroups, the continuity from above is actually not necessary in order to obtain the existence of a L\'{e}vy process under a nonlinear expectation. The nonlinear L\'{e}vy process can be explicitly constructed via space-time discrete stochastic integrals w.r.t. L\'{e}vy processes with L\'evy triplet contained in $\La$. We refer to \cite[Proposition 5.12]{dkn} for the details of the construction.
\end{example}

\begin{example}[$\al$-stable L\'evy processes]
Consider the setup of the previous example, with $G=\R^d$ for some $d\in \N$ and let $A_\al:=-(-\De)^\al$ be fractional Laplacian for $0<\al<1$. Then, for any compact subset $\La\subset (0,1)$, condition \eqref{lev1} is satisfied. Hence, the assumptions (A1) and (A2) are satisfied and the semigroup envelope $\SS$ is strongly continuous with $\Lipb^2\subset \DD$. By Theorem \ref{viscosity1}, we thus obtain that $u(t):=\SS(t)u_0$, for $t\geq 0$, defines a viscosity solution to the nonlinear Cauchy problem
  \begin{eqnarray*}
  u_t(t,x)&=&\sup_{\al\in \La} -(-\De)^\al u(t,x), \quad (t,x)\in (0,\infty)\times \R^d,\\
  u(0,x)&=&u_0(x),\quad x\in \R^d.
 \end{eqnarray*}
The related nonlinear L\'evy process can be interpreted as a $\La$-stable L\'evy process.
\end{example}

\begin{example}[Mehler semigroups]
 Consider the case, where the state space $M=H$ is a real separable Hilbert space and $\ka=1$. Let $(T,\mu)$ be a tuple consisting of a $C_0$-semigroup $T=(T(t))_{t\geq 0}$ of linear operators on $H$ with $\|T(t)\|\leq e^{\al t}$ for all $t\geq 0$ and some $\al\in \R$ and a family $\mu=(\mu_t)_{t\geq 0}$ of probability measures on $H$ such that
 \[
  \mu_0=\de_0\quad \text{and}\quad \mu_{t+s}=\mu_s \ast\mu_t\circ T(s)^{-1}\quad \text{for all }s,t\geq 0.
 \]
 We then define the generalized Mehler semigroup $S=S_{(T,\mu)}$ by 
 \[
  \big(S(t)u\big)(x):=\int_H u(T(t)x+y)\, {\rm d}\mu_t(y)
 \]
 for $u\in \UC_{\rm b}$, $t\geq 0$ and $x\in H$, see e.g.~\cite{MR1392452},\cite{MR1745332}. Then, $\|S(t)u\|_\infty\leq \|u\|_\infty$ for all $u\in \Cb$ and $\|S(t)u\|_{\Lip}\leq e^{\al t}\|u\|_{\Lip}$ for $u\in \Lipb$.
 Hence, for any nonempty family $\La$ of tuples $(T,\mu)$ with $\|T(t)\|\leq e^{\al t}$ for all $t\geq 0$ the assumptions (A1) and (A2) are satisfied.
\end{example}

\begin{example}[Bounded generators on $\ell^\infty$]
 Let $M=\N$ and $\ka(i)=1$ for all $i\in \N$. Let $(A_\la)_{\la\in \La}\subset L(\ell^\infty)$ be a family of operators satisfying the positive maximum principle and
 $$\sup_{\la\in \La}\|A_\la\|_{L(\ell^\infty)}<\infty.$$
 Here, we say that an operator $A\in L(\ell^\infty)$ satisfies the positive maximum principle if $A_{ii}<0$ for all $i\in \N$ and $A_{ij}\geq 0$ for all $i,j\in \N$ with $i\neq j$. Then, the family $(A_\la)_{\la\in \La}$ satisfies the assumptions (A1) and (A2) with $\DD=\ell^\infty$. In particular, the semigroup envelope is strongly continuous. If $A_\la 1=0$ for all $\la\in \La$, then the semigroup envelope admits a stochastic representation. This representation can be seen as a nonlinear Markov chain with state space $\N$.
\end{example}

\begin{example}[Multiples of generators of Feller semigroups]
 Let $A$ be the generator of a strongly continuous Feller semigroup $(S(t))_{t\geq 0}$ of linear operators. Assume that there exist constants $\al,\be\in \R$ such that
  \[
  \|S(t)u\|_\ka\leq e^ {\al t}\|u\|_\ka\quad \text{and}\quad \|S(t)u\|_{\Lip}\leq e^{\be t}\|u\|_{\Lip}
  \]
 for all $u\in \Lipb$ and $t\geq 0$. For $\la\geq 0$ let $A_\la:=\la A$ for all $\la$. Then, $A_\la$ generates the semigroup $S_\la$ given by $S_\la(t):=S(\la t)$ for all $t\geq 0$ and $\la\geq 0$. Then, for any compact set $\La\subset [0,\infty)$ the family $(S_\la)_{\la\in \La}$ satisfies the assumptions (A1) and (A2) with $D(A)\subset \DD$ and the semigroup envelope is strongly continuous. Hence, by Theorem \ref{viscosity1}, we obtain that $u(t):=\SS(t)u_0$, for $t\geq 0$, defines a viscosity solution to the abstract Cauchy problem
 \begin{eqnarray*}
   u'(t)&=&\sup_{\la\in \La}\la A u(t), \quad \text{for }t> 0,\\
  u(0)&=&u_0.
 \end{eqnarray*}
\end{example}

\end{document}